\documentclass[reqno, 12pt]{amsart}
\usepackage{amsmath,amsfonts,amsthm,amscd,upref,amstext}
\usepackage{amssymb}
\usepackage{mathtools}
\usepackage{xcolor}
\usepackage{fullpage}
\usepackage{lmodern}

\usepackage[utf8]{inputenc}
\usepackage[T1]{fontenc}
\usepackage{amsmath,amssymb,amsthm,amsfonts,amscd,amstext}
\usepackage{mathtools,mathrsfs}
\usepackage{enumitem}
\usepackage{lmodern}
\usepackage{hyperref}
\hypersetup{
  pdftitle={Generalized Mixed Sequence Multiplicity},
  pdfauthor={Victor Hugo Jorge Perez and Thiago Freitas}
}

\newtheorem{theorem}{Theorem}[section]
\newtheorem{lemma}[theorem]{Lemma}
\newtheorem{proposition}[theorem]{Proposition}
\newtheorem{corollary}[theorem]{Corollary}

\theoremstyle{definition}
\newtheorem{definition}[theorem]{Definition}

\theoremstyle{remark}
\newtheorem{remark}[theorem]{Remark}

\title{Generalized Mixed Sequence Multiplicity}
\author{T. H. Freitas}
\address{Universidade Tecnol\'ogica Federal do Paran\'a, 85053-525, Guarapuava-PR, Brazil}
\email{freitas.thf@gmail.com}

\author{V. H. Jorge P\'erez}
\address{Universidade de S{\~a}o Paulo -- ICMC, 13560-970, S{\~a}o Carlos-SP, Brazil}
\email{vhjperez@icmc.usp.br}
\date{}

\begin{document}

\begin{abstract}

Let $(R,\mathfrak m,k)$ be a Noetherian local ring, let $M$ be a nonzero finitely generated $R$-module, and let $J_1,\ldots,J_s$ be arbitrary proper ideals of positive height on $M$. We introduce the notion of directional mixed multiplicities, obtained from the multigraded quotients $J^{\mathbf u}M/J^{\mathbf u+\mathbf e_j}M$ by retaining one distinguished direction $j$. These invariants refine the mixed multiplicity sequences of arbitrary ideals while remaining compatible with the two classical extremes: for one ideal they recover the positive components of the Achilles--Manaresi multiplicity sequence, and for $\mathfrak m$-primary ideals their top-degree terms recover the usual mixed multiplicities individually. We establish additivity, an associativity formula, superficial reduction, and an exact decomposition of the diagonal coefficients into directional components. Over an infinite residue field, every positive directional coefficient is realized as an eventual length by a sufficiently general directional $(FC)$-system. We then identify an additional synchronization condition under which the directional coefficient becomes the Hilbert--Samuel multiplicity of a parameter ideal, yielding a Rees-type theorem and recovering the classical theorem in the $\mathfrak m$-primary case. In addition, we show a multihomogeneous product formula for $J_1^{n_1}\cdots J_s^{n_s}$, which specializes to the classical Bhattacharya--Rees--Teissier formula.

\end{abstract}

\subjclass[2020]{Primary 13H15; Secondary 13A30, 13D40}
\keywords{mixed multiplicity, multiplicity sequence, arbitrary ideal,
joint reduction, filter-regular sequence, Rees theorem}

\maketitle

\section{Introduction}
\label{sec:introduction}

Mixed multiplicities are a multigraded extension of Hilbert--Samuel
multiplicity and have long provided a bridge between local algebra and
intersection-theoretic questions.  Their origins go back to
Bhattacharya's study of the Hilbert function of two $\mathfrak m$-primary
ideals \cite{Bhattacharya} and to the work of Risler and Teissier on
several primary ideals and intersection theory \cite{Teissier}.  A
fundamental step was taken by Rees, who introduced joint reductions and
showed that classical mixed multiplicities can be realized as ordinary
Hilbert--Samuel multiplicities of ideals generated by suitable joint
reductions \cite{Rees} (see also \cite[Chapter~17]{HunekeSwanson} for further details).  The
theory has since interacted with integral dependence, reduction theory,
multigraded algebras, and convex geometry (see for instance 
\cite{TrungVerma}).

The situation changes substantially when the ideals are not
$\mathfrak m$-primary.  The usual Hilbert--Samuel function no longer has
finite length, so one must replace the classical construction by a more
refined multigraded object. In this sense, Achilles and Manaresi introduced a
multiplicity sequence for an arbitrary ideal by means of a bigraded
construction \cite{AchillesManaresi}.  This sequence retains many of the
features of ordinary multiplicity: its additivity and associativity
properties were developed in \cite{CallejasBedregalJorgePerezAdditivity},
and recently it was shown to detect integral dependence under
natural hypotheses \cite{PoliniTrungUlrichValidashti}.  For several
arbitrary ideals, Callejas--Bedregal and Jorge P\'erez introduced mixed
multiplicity sequences using the diagonal quotients $  \frac{J^{\mathbf u}M}{J^{\mathbf u+\mathbf 1}M},$
and obtained, among other consequences, a product formula extending the
one-ideal theory \cite{CallejasBedregalJorgePerez}.

There is, however, a feature of the diagonal construction that becomes
visible already in the classical $\mathfrak m$-primary case.  A diagonal
coefficient is not a single Rees--Teissier mixed multiplicity; rather, it
is a sum of adjacent mixed multiplicities corresponding to the possible
choices of the ideal that contributes the last increment.  Thus the
symmetry of the diagonal quotient comes at the price of losing the
individual directional information.  This leads to the question that
motivates the present paper: can one refine the mixed multiplicity
sequence of several arbitrary ideals so that the distinguished ideal is
remembered, while still recovering both the Achilles--Manaresi sequence
for one ideal and the individual classical mixed multiplicities for
$\mathfrak m$-primary ideals?

Our starting point is to replace the diagonal quotient by the
$j$-directional quotient $\frac{J^{\mathbf u}M}{J^{\mathbf u+\mathbf e_j}M}.$
Proposition~\ref{prop:directional-polynomiality} shows that the associated
cumulative multigraded Hilbert function is eventually polynomial, with
top part of total degree at most $D-1$, where $D=\dim_R M$.  Its normalized
top-degree coefficients are the directional mixed multiplicities of
Definition~\ref{def:directional-mixed-multiplicities}.  The first basic
compatibility results show that this is not merely a formal refinement.
Proposition~\ref{prop:one-ideal-directional} proves that, for a single
ideal $I$, one has
\[
   c^{(1)}(I^{[i]};M)=c_i(I;M),
\]
so the construction recovers the positive components of the
Achilles--Manaresi multiplicity sequence.  At the other classical
extreme, Theorem~\ref{thm:directional-primary-compatibility} gives that
for $\mathfrak m$-primary ideals all lower directional coefficients
vanish and, whenever $d_1+\cdots+d_s=D$ and $d_j>0$,
\[
 c^{(j)}\bigl(J_1^{[d_1]},\ldots,J_s^{[d_s]};M\bigr)
 =e\bigl(J_1^{[d_1]},\ldots,J_s^{[d_s]};M\bigr).
\]
Thus the directional theory interpolates term by term between the
one-ideal multiplicity sequence and the classical Rees--Teissier theory.

The structural properties of the new coefficients parallel the familiar
properties of multiplicity, but they must be proved while keeping track
of the distinguished direction.  Lemma~\ref{lem:directional-additivity-exact-sequences}
establishes additivity on short exact sequences, and
Theorem~\ref{thm:directional-additivity-reduction-formula} gives an
associativity formula over the top-dimensional minimal components of
$M$.  Theorem~\ref{thm:directional-superficial-reduction} shows that a
suitable directional superficial element removes one occurrence of the
corresponding ideal from the type.  Most importantly for comparison with
the existing theory of several arbitrary ideals,
Proposition~\ref{prop:diagonal-directional-sum} provides that every diagonal
coefficient of Callejas--Bedregal and Jorge P\'erez is exactly the sum of
its directional components.  

The next issue is positivity and realization.  For arbitrary ideals,
D.~Q.~Vi\^et introduced weak-$(FC)$- and $(FC)$-sequences as a way of
controlling mixed multiplicities and reductions \cite{Viet2000}. These ideas was further developed through filter-regular sequences,
Rees superficial sequences, and joint reductions in
\cite{ManhViet,ThanhViet, VietDinhThanh}.  Our directional module is
different from the multigraded modules occurring in those theories, so
the classical definitions cannot simply be imported.  We therefore
define directional weak-$(FC)$-elements and directional $(FC)$-systems
on the multigraded module attached to the quotient
$J^{\mathbf u}M/J^{\mathbf u+\mathbf e_j}M$.  If the residue field is
infinite, Theorem~\ref{thm:directional-FC-existence} shows that every
positive directional mixed multiplicity admits a sufficiently general
such system and, moreover, is computed by an eventual constant length.
This gives an intrinsic realization of every positive directional
coefficient.

Turning this intrinsic length into a Hilbert--Samuel multiplicity is
more delicate.  In the classical $\mathfrak m$-primary setting, Rees'
theorem identifies mixed multiplicities with multiplicities of joint
reductions \cite{HunekeSwanson,Rees}.  For arbitrary ideals, Vi\^et and
subsequent authors obtained related Rees-type results under additional
hypotheses (see \cite{ThanhViet, VietDinhThanh}).  For the directional
invariants considered here, however, the ordinary joint-reduction
equation does not by itself remember the distinguished grading.  The
example in Remark~\ref{rem:ordinary-joint-reduction-not-enough} shows
that an ordinary joint reduction may generate an ideal of positive
Hilbert--Samuel multiplicity while the corresponding directional mixed
multiplicity is zero.  This is why an additional compatibility is
necessary.  Definition~\ref{def:synchronized-Rees-realization} isolates
a synchronization condition between the terminal directional quotient
and a one-dimensional parameter module.  Under this condition,
Theorem~\ref{thm:directional-Rees-FC} gives the Rees-type formula
\[
 c_R^{(j)}\bigl(J_1^{[d_1]},\ldots,J_s^{[d_s]};M\bigr)
 =e_R(Q;M),
\]
where $Q$ is the parameter ideal generated by the synchronized
realization.  Also,
Corollary~\ref{cor:classical-Rees-from-directional-FC} recovers Rees'
classical mixed-multiplicity theorem.

A further advantage of retaining the direction is that product formulas
can be refined before the directional information is summed away.
Theorem~\ref{thm:directional-product-formula-powers} proves that, for
$\mathbf n=(n_1,\ldots,n_s)\in\mathbb N_{>0}^s$ and
$I(\mathbf n)=J_1^{n_1}\cdots J_s^{n_s}$,
\[
\begin{aligned}
c_i\bigl(I(\mathbf n);M\bigr)
={}&\sum_{d_1+\cdots+d_s=i}
 \ \sum_{\substack{1\leq j\leq s\\ d_j>0}}
 \frac{(i-1)!}{d_1!\cdots(d_j-1)!\cdots d_s!}\,
 c^{(j)}\bigl(J_1^{[d_1]},\ldots,J_s^{[d_s]};M\bigr)
 n_1^{d_1}\cdots n_s^{d_s}.
\end{aligned}
\]
For $n_1=\cdots=n_s=1$, summing over the distinguished directions
recovers the product formula of
Callejas--Bedregal and Jorge P\'erez (Remark~\ref{rem:comparison-CBJP-product-formula}).  When the ideals are
$\mathfrak m$-primary, Corollary~\ref{cor:classical-product-formula-from-directional}
specializes the same identity to the classical
Bhattacharya--Rees--Teissier formula.  Thus the product theorem exhibits,
in a single formula, how the arbitrary-ideal multiplicity sequence and
the classical mixed multiplicities fit inside the directional theory.

The paper is organized as follows.
Section~\ref{sec:directional-mixed-multiplicities} introduces the
directional multiform modules and directional mixed multiplicities, and
establishes their compatibility with the one-ideal and
$\mathfrak m$-primary theories.  Section~\ref{sec:structural-properties}
shows additivity, associativity, superficial reduction, and the exact
decomposition of the diagonal coefficients.  In
Section~\ref{sec:directional-FC-Rees} we develop directional
$(FC)$-systems, prove the length realization theorem, and establish the
synchronized Rees-type formula together with its classical
specialization.  The last section proves the
multihomogeneous product formula and derives the classical product
formula as a consequence.

\section{Directional mixed multiplicities of arbitrary ideals}
\label{sec:directional-mixed-multiplicities}

Throughout this section, let \((R,\mathfrak m,k)\) be a Noetherian
local ring and let \(M\) be a nonzero finitely generated \(R\)-module
of dimension
\(
 D:=\dim_R M\geq1.
\)
We first recall the classical mixed multiplicities of
\(\mathfrak m\)-primary ideals on \(M\) and the multiplicity sequence
of an arbitrary ideal on \(M\). Also, we introduce a module refinement of the construction of Callejas--Bedregal and
Jorge P\'erez \cite[Section~5]{CallejasBedregalJorgePerez}. The proposed refinement is designed to have the following two
properties. For one ideal, it recovers the individual terms of the
multiplicity sequence of \(I\) on \(M\). For \(\mathfrak m\)-primary
ideals, it recovers each classical mixed multiplicity of \(M\)
separately, rather than a sum of adjacent mixed multiplicities.

Set \(\overline R=R/\operatorname{Ann}_R(M)\). For an ideal
\(I\subseteq R\), put
\(
 \operatorname{ht}_M(I)
 :=\operatorname{ht}_{\overline R}
\bigl((I+\operatorname{Ann}_R(M))/\operatorname{Ann}_R(M)\bigr).
\)
Thus \(\operatorname{ht}_M(I)>0\) means that \(I\) has positive height
on \(\operatorname{Supp}_R(M)\). This is the meaning of ``positive
height'' used below.

\subsection*{Classical mixed multiplicities of
\texorpdfstring{\(\mathfrak m\)}{m}-primary ideals}

Let \(J_1,\ldots,J_s\) be \(\mathfrak m\)-primary ideals of \(R\). For
\(\mathbf u=(u_1,\ldots,u_s)\in\mathbb N^s\), write
\(
 J^{\mathbf u}:=J_1^{u_1}\cdots J_s^{u_s}.
\)
The Bhattacharya function associated with \(J_1,\ldots,J_s\) and
\(M\) is
\[
 B_{\mathbf J,M}(\mathbf u)
 :=\lambda_R\bigl(M/J^{\mathbf u}M\bigr).
\]
For \(\mathbf u\gg\mathbf0\), this function agrees with a polynomial
\(Q_{\mathbf J,M}(\mathbf u)\) of total degree \(D\). The ring case is
due
to Bhattacharya for two ideals and to Risler--Teissier in the general
case (see \cite{Bhattacharya,Teissier},
\cite[Section~4]{CallejasBedregalJorgePerez}, and
\cite[Section~17.4]{HunekeSwanson}). We use the classical repetition notation. Thus, if
\(\mathbf d=(d_1,\ldots,d_s)\in\mathbb N^s\) and
\(|\mathbf d|=d_1+\cdots+d_s=D\), the homogeneous component of degree
\(D\) of the Bhattacharya polynomial is
\begin{equation}
 Q_{\mathbf J,M,D}(\mathbf u)
 =\sum_{d_1+\cdots+d_s=D}
 \frac{e\bigl(J_1^{[d_1]},\ldots,J_s^{[d_s]};M\bigr)}
 {d_1!\cdots d_s!}
 u_1^{d_1}\cdots u_s^{d_s}.
 \label{eq:Bhattacharya-repetition}
\end{equation}
The nonnegative integer
\(
 e\bigl(J_1^{[d_1]},\ldots,J_s^{[d_s]};M\bigr)
\)
is the classical mixed multiplicity of \(M\), with respect to
\(J_1,\ldots,J_s\), of type \((d_1,\ldots,d_s)\). The symbol
\(J_\ell^{[d_\ell]}\) indicates that
\(J_\ell\) is repeated \(d_\ell\) times; it does not denote an
ordinary power or a Frobenius power. If \(d_\ell=1\), we write
\(J_\ell\) in place of \(J_\ell^{[1]}\). Equivalently,
\[
 e\bigl(J_1^{[d_1]},\ldots,J_s^{[d_s]};M\bigr)
 =e\bigl(
 \underbrace{J_1,\ldots,J_1}_{d_1\text{ times}},\ldots,
 \underbrace{J_s,\ldots,J_s}_{d_s\text{ times}};M
 \bigr).
\]

The pure types recover the Hilbert--Samuel multiplicities:
\begin{equation}
 e\bigl(J_1^{[0]},\ldots,J_j^{[D]},\ldots,J_s^{[0]};M\bigr)
 =e\bigl(J_j^{[D]};M\bigr)=e(J_j;M).
 \label{eq:pure-classical-type}
\end{equation}
In addition, the classical product formula is given by
\begin{equation}
 e(J_1\cdots J_s;M)
 =\sum_{d_1+\cdots+d_s=D}
 \binom{D}{d_1,\ldots,d_s}
 e\bigl(J_1^{[d_1]},\ldots,J_s^{[d_s]};M\bigr),
 \label{eq:classical-product-formula}
\end{equation}
where
\(
 \binom{D}{d_1,\ldots,d_s}=D!/(d_1!\cdots d_s!)
\)
(see  \cite{Teissier},
\cite[Theorem~4.1]{CallejasBedregalJorgePerez}, and
\cite[Sections~17.4--17.5]{HunekeSwanson}).
The passage from rings to modules is governed by the associativity
formula. If
\(
 \operatorname{Assh}_R(M)
 :=\{\mathfrak p\in\operatorname{Ass}_R(M)
       :\dim(R/\mathfrak p)=D\},\)
then, for every \(d_1+\cdots+d_s=D\),
\begin{equation}
 e\bigl(J_1^{[d_1]},\ldots,J_s^{[d_s]};M\bigr)
 =\sum_{\mathfrak p\in\operatorname{Assh}_R(M)}
 \lambda_{R_{\mathfrak p}}(M_{\mathfrak p})
 e\bigl(J_1^{[d_1]},\ldots,J_s^{[d_s]};R/\mathfrak p\bigr).
 \label{eq:classical-module-associativity}
\end{equation}
Thus the classical mixed multiplicities of a module depend, in top
degree, only on its top-dimensional components and their generic
lengths (see \cite[Section~17.4]{HunekeSwanson}).

\subsection*{The multiplicity sequence}

Let \(I\subsetneq R\) be an arbitrary ideal and put
\[
 G_I(M):=\operatorname{gr}_I(M)
 =\bigoplus_{b\geq0}I^bM/I^{b+1}M.
\]
The doubly associated graded module
\(
 \mathscr G(I;M):=\operatorname{gr}_{\mathfrak m}(G_I(M))
\) is a finitely generated bigraded module over the corresponding
standard bigraded \(k\)-algebra, with components
\[
 \mathscr G(I;M)_{a,b}
 =\frac{\mathfrak m^aI^bM+I^{b+1}M}
 {\mathfrak m^{a+1}I^bM+I^{b+1}M}.
\]
Consider the doubly cumulative Hilbert function
\[
 h_{I,M}(r,u):=\sum_{a=0}^{r}\sum_{b=0}^{u}
 \lambda_R\bigl(\mathscr G(I;M)_{a,b}\bigr).
\]
For \(r,u\gg0\), this function agrees with a polynomial of total
degree at most \(D\), whose homogeneous component of degree \(D\) is
\begin{equation}
 \sum_{i=0}^{D}\frac{c_i(I;M)}{(D-i)!\,i!}r^{D-i}u^i.
 \label{eq:multiplicity-sequence-polynomial}
\end{equation}
The sequence \(c_0(I;M),\ldots,c_D(I;M)\) is the multiplicity
sequence of \(I\) on \(M\); it is the module version of the
Achilles--Manaresi sequence (\cite[Definition~2.2]{AchillesManaresi} and
\cite[Section~2]{PoliniTrungUlrichValidashti}).
We use the codimension indexing of
\cite{PoliniTrungUlrichValidashti}, which reverses the original
dimension indexing of Achilles--Manaresi. If
\(c_q^{\mathrm{AM}}(I;M)\) denotes the original invariant, then
\(
 c_i(I;M)=c_{D-i}^{\mathrm{AM}}(I;M).
\)
In particular, if \(I\) is \(\mathfrak m\)-primary, then
\(
 c_i(I;M)=0\) for  $i<D$, and $c_D(I;M)=e(I;M)$.

\subsection*{Directional multiform modules}

We now let \(J_1,\ldots,J_s\) be arbitrary proper ideals satisfying
\(\operatorname{ht}_M(J_\ell)>0\) for every \(\ell\). Consider the
multi-Rees algebra
\[
 \mathcal R(\mathbf J)
 :=R[J_1T_1,\ldots,J_sT_s]
 =\bigoplus_{\mathbf u\in\mathbb N^s}
 J^{\mathbf u}\mathbf T^{\mathbf u},
\]
where \(\mathbf T^{\mathbf u}=T_1^{u_1}\cdots T_s^{u_s}\).
The multi-Rees module of \(\mathbf J\) on \(M\) is
\(
 \mathcal R(\mathbf J;M)
 :=\bigoplus_{\mathbf u\in\mathbb N^s}
 J^{\mathbf u}M\,\mathbf T^{\mathbf u}.
\)
It is a finitely generated \(\mathbb N^s\)-graded module over
\(\mathcal R(\mathbf J)\).
For \(j\in\{1,\ldots,s\}\), define the \(j\)-directional multiform
module by
\[
 \mathscr S^{(j)}(\mathbf J;M)
 :=\frac{\mathcal R(\mathbf J;M)}
 {J_j\mathcal R(\mathbf J;M)}
 =\bigoplus_{\mathbf u\in\mathbb N^s}
 \frac{J^{\mathbf u}M}{J^{\mathbf u+\mathbf e_j}M}
 \mathbf T^{\mathbf u}.
\]
Its associated graded module with respect to \(\mathfrak m\) is
\[
 \mathscr G^{(j)}(\mathbf J;M)
 :=\operatorname{gr}_{\mathfrak m}\mathscr S^{(j)}(\mathbf J;M)
 =\bigoplus_{\substack{r\geq0\\\mathbf u\in\mathbb N^s}}
 \frac{\mathfrak m^rJ^{\mathbf u}M+J^{\mathbf u+\mathbf e_j}M}
 {\mathfrak m^{r+1}J^{\mathbf u}M+J^{\mathbf u+\mathbf e_j}M}.
\]
This is a finitely generated \(\mathbb N^{s+1}\)-graded module over
the corresponding standard \(k\)-algebra. Its cumulative Hilbert
function in the \(\mathfrak m\)-direction is given by
\begin{align}
 H_{\mathbf J,M}^{(j)}(r,\mathbf u)
 &:=\sum_{q=0}^{r}
 \lambda_R\bigl(\mathscr G^{(j)}(\mathbf J;M)_{q,\mathbf u}\bigr) =\lambda_R\left(
 \frac{J^{\mathbf u}M}
 {\mathfrak m^{r+1}J^{\mathbf u}M+J^{\mathbf u+\mathbf e_j}M}
 \right).
 \label{eq:directional-Hilbert-function}
\end{align}

We start with a key result for the rest of the paper. 
\begin{proposition}
\label{prop:directional-polynomiality}
Let \((R,\mathfrak m,k)\) be a Noetherian local ring, let \(M\) be a nonzero finitely generated \(R\)-module of dimension \(D\), and let \(J_1,\ldots,J_s\) be proper ideals such that \(\operatorname{ht}_M(J_\ell)>0\) for every \(\ell\). Fix \(j\in\{1,\ldots,s\}\), and put \(E^{(j)}:=\mathscr G^{(j)}(\mathbf J;M)\). Then the function \(H_{\mathbf J,M}^{(j)}(r,\mathbf u) = \sum_{q=0}^{r} \lambda_R\bigl(E^{(j)}_{q,\mathbf u}\bigr)\) agrees, for \(r,\mathbf u\gg0\), with a polynomial \(P_{\mathbf J,M}^{(j)}(r,\mathbf u)\in \mathbb Q[r,u_1,\ldots,u_s]\) of total degree at most \(D-1\). If \(P_{\mathbf J,M,D-1}^{(j)}\) denotes its homogeneous component of degree \(D-1\), with the convention that this component is zero when \(\deg P_{\mathbf J,M}^{(j)}<D-1\), then there are uniquely determined nonnegative integers \(c^{(j)}\bigl( J_1^{[d_1]},\ldots,J_s^{[d_s]};M \bigr)\), \(1\leq|\mathbf d|\leq D\), \(d_j>0\), such that
\begin{equation}
\begin{aligned}
P_{\mathbf J,M,D-1}^{(j)}(r,\mathbf u)
={}&\sum_{\substack{\mathbf d\in\mathbb N^s\\
                     1\leq|\mathbf d|\leq D,\ d_j>0}}
 \frac{c^{(j)}\bigl(J_1^{[d_1]},\ldots,J_s^{[d_s]};M\bigr)}
 {(D-|\mathbf d|)!\,(d_j-1)!\prod_{\ell\neq j}d_\ell!} \cdot
 r^{D-|\mathbf d|}u_j^{d_j-1}
 \prod_{\ell\neq j}u_\ell^{d_\ell}.
\end{aligned}
\label{eq:proved-directional-polynomial-repetition}
\end{equation}
\end{proposition}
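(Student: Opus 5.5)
The plan is to reduce the statement to the standard theory of Hilbert polynomials of finitely generated multigraded modules over standard multigraded $k$-algebras, and then to get the degree bound from a Krull-dimension estimate that uses $\operatorname{ht}_M(J_j)>0$.

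\emph{Step 1: polynomiality.} $E^{(j)}=\mathscr G^{(j)}(\mathbf J;M)$ is a finitely generated $\mathbb N^{s+1}$-graded module over the standard $\mathbb N^{s+1}$-graded algebra $\operatorname{gr}_{\mathfrak m}\mathcal R(\mathbf J)\otimes_R k$. It is generated in degrees $(0,\mathbf 0)$, $(1,\mathbf 0)$ and $(0,\mathbf e_\ell)$; after passing to a quotient by a power of $\mathfrak m$, we may take it to be generated over the polynomial ring on these generators. The multigraded Hilbert polynomial theorem (van der Waerden, Bhattacharya, Herrmann et al.) then applies. It says that $(q,\mathbf u)\mapsto\lambda_R(E^{(j)}_{q,\mathbf u})$ agrees for $q,\mathbf u\gg0$ with a polynomial of total degree $\dim E^{(j)}-(s+1)$, where $\dim$ means the relevant dimension (so the polynomial is $0$ if this is negative). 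Moreover, the top-degree coefficients, multiplied by the corresponding factorials, are nonnegative integers.

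The cumulative sum $\sum_{q\le r}$ is itself the Hilbert function of the $\mathbb N^{s+1}$-graded module $E^{(j)}\otimes_k k[t]$ truncated appropriately. Equivalently, it is the Hilbert function of $E^{(j)}[t]$ with $t$ placed in degree $(1,\mathbf 0)$ and the $\mathfrak m$-degree summed. Hence $H^{(j)}_{\mathbf J,M}$ is eventually polynomial, the total degree rises by exactly one, and the normalized top coefficients are again nonnegative integers.

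\emph{Step 2: the degree bound.} This is the main obstacle. I would show that
\[
\dim\mathscr S^{(j)}(\mathbf J;M)=\dim \mathcal R(\mathbf J;M)/J_j\mathcal R(\mathbf J;M)\le D+s-1 .
\]
Since $\operatorname{gr}_{\mathfrak m}$ preserves dimension, this gives a degree bound of $D+s-1-(s+1)=D-2$ before summation, and hence $\le D-1$ for $H^{(j)}$.

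First, $\dim\mathcal R(\mathbf J;M)\le D+s$. Second, every minimal prime $P$ of $\mathcal R(\mathbf J;M)$ with $\dim \mathcal R(\mathbf J)/P=D+s$ contracts to a minimal prime $\mathfrak p$ of $\operatorname{Supp}M$. Indeed, $\mathcal R(\mathbf J;M)$ has a filtration with factors that are quotients of $\mathcal R(\mathbf J_{R/\mathfrak p})$, and $\dim\mathcal R(\mathbf J R/\mathfrak p)\le \dim R/\mathfrak p+s$, with equality only for the top-dimensional ones when the $J_\ell$ are not contained in $\mathfrak p$. Because $\operatorname{ht}_M(J_j)>0$, prime avoidance gives $x\in J_j$ outside all minimal primes of $\operatorname{Supp}M$. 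Then $x\notin P$ for all such top-dimensional $P$, and so
\[
\dim \mathcal R(\mathbf J;M)/x\mathcal R(\mathbf J;M)\le D+s-1 .
\]
Since $x\mathcal R\subseteq J_j\mathcal R$, the bound follows. Some care is needed with components where some $J_\ell\subseteq\mathfrak p$; there $\dim\mathcal R(\mathbf J R/\mathfrak p)\le \dim R/\mathfrak p + s-1<D+s$ anyway. This is why I expect this step to need the most checking.

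\emph{Step 3: the shape of the top component.} Write the degree-$(D-1)$ homogeneous component as $\sum a_{\alpha,\boldsymbol\beta}\,r^{\alpha}\mathbf u^{\boldsymbol\beta}$ with $\alpha+|\boldsymbol\beta|=D-1$. The map $(\alpha,\boldsymbol\beta)\mapsto\mathbf d:=\boldsymbol\beta+\mathbf e_j$ is a bijection onto
\[
\{\mathbf d\in\mathbb N^s: 1\le|\mathbf d|\le D,\ d_j>0\},
\qquad\text{with } \alpha=D-|\mathbf d| .
\]
Define
\[
c^{(j)}\bigl(J_1^{[d_1]},\ldots,J_s^{[d_s]};M\bigr):=(D-|\mathbf d|)!\,(d_j-1)!\prod_{\ell\ne j}d_\ell!\;a_{\alpha,\boldsymbol\beta}.
\]
Uniqueness is then immediate from the uniqueness of polynomial coefficients. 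Nonnegativity and integrality are the Step 1 statement about normalized top coefficients, applied in degree exactly $D-1$ when the polynomial has that degree; otherwise all the $c^{(j)}$ are $0$.
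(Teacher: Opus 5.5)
Your Steps 1 and 3 match the paper's proof. Like the paper, you realize $H^{(j)}$ as the Hilbert function of $E^{(j)}\otimes_k k[t]$ (the paper uses $Z$), apply the multigraded Hilbert theorem, and reindex via $\mathbf d=\boldsymbol\beta+\mathbf e_j$.

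\textbf{Step 2: a different route.} Your dimension estimate $\dim\mathscr S^{(j)}(\mathbf J;M)\le D+s-1$ is obtained differently from the paper.
\begin{itemize}
\item The paper notes that $\mathscr S^{(j)}$ is a quotient of the diagonal module $\mathcal R(\mathbf J;M)/(J_1\cdots J_s)\mathcal R(\mathbf J;M)$. It then cites Callejas-Bedregal--Jorge P\'erez for the fact that the diagonal multiform algebra has dimension $D+s-1$.
\item You choose $x\in J_j$ outside the minimal primes of $\operatorname{Supp}M$. You check that every $P\in\operatorname{Supp}_A\mathcal R(\mathbf J;M)$ with $\dim A/P=D+s$ lies over a $D$-dimensional minimal prime of $\operatorname{Supp}M$, so $x\notin P$. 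Then $\mathscr S^{(j)}$ is a quotient of $\mathcal R(\mathbf J;M)/x\mathcal R(\mathbf J;M)$.
\end{itemize}
This is correct. It is self-contained apart from the standard bound $\dim\mathcal R(\mathbf J(R/\mathfrak p))\le\dim R/\mathfrak p+s$. It also shows that the degree bound only needs $\operatorname{ht}_M(J_j)>0$ for the distinguished index. The paper's route uses all the $J_\ell$ through their product and relies on an external citation.

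A small imprecision: the factors of the filtration induced from a prime filtration of $M$ are not necessarily quotients of $\mathcal R(\mathbf J(R/\mathfrak p))$. They are finitely generated $\mathcal R(\mathbf J(R/\mathfrak p))$-modules sitting inside $(R/\mathfrak p)[\mathbf T]$. That is all you need; alternatively, take a prime filtration of $\mathcal R(\mathbf J;M)$ over $A$ directly.

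\textbf{A wrongly justified step.} Summing over $q$ does not raise the degree ``by exactly one''. So you cannot get the bound for $H^{(j)}$ from the bound $D-2$ on the Hilbert polynomial of $E^{(j)}$. The lengths $\lambda(E^{(j)}_{q,\mathbf u})$ for small $q$ are not controlled by that polynomial.

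Take all $J_\ell$ to be $\mathfrak m$-primary, with $\mathfrak m^a\subseteq J_j$.
\begin{itemize}
\item Then $E^{(j)}_{q,\mathbf u}=0$ for $q\ge a$, so the Hilbert polynomial of $E^{(j)}$ is zero.
\item But for $r\ge a-1$, $H^{(j)}(r,\mathbf u)=\lambda(J^{\mathbf u}M/J^{\mathbf u+\mathbf e_j}M)$. This has degree exactly $D-1$: its $u_j^{D-1}$ coefficient is $e(J_j;M)/(D-1)!$.
\item These are exactly the coefficients of type $|\mathbf d|=D$.
\end{itemize}
So the implication ``degree $\le D-2$ before summation, hence $\le D-1$ after'' is unsupported as written.

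The repair uses the module you already built, as the paper does. Apply the Hilbert theorem directly to $E^{(j)}\otimes_k k[t]$. Its Krull dimension is $\dim E^{(j)}+1\le D+s$. Hence its Hilbert polynomial, which eventually equals $H^{(j)}$, has total degree at most $(D+s)-(s+1)=D-1$, and its normalized top coefficients are nonnegative integers. With this change your argument is complete.
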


\begin{proof}
Set \(A:=\mathcal R(\mathbf J)\) and \(\mathcal M:=\mathcal R(\mathbf J;M)\). The module \(\mathcal M\) is a finitely generated \(\mathbb N^s\)-graded \(A\)-module, and hence \(\mathscr S^{(j)}(\mathbf J;M) =\mathcal M/J_j\mathcal M\) is also finitely generated. Let \(B:=\operatorname{gr}_{\mathfrak mA}(A)\). This is a standard \(\mathbb N^{s+1}\)-graded \(k\)-algebra, and \(E^{(j)} =\operatorname{gr}_{\mathfrak mA} \bigl(\mathscr S^{(j)}(\mathbf J;M)\bigr)\) is a finitely generated \(\mathbb N^{s+1}\)-graded \(B\)-module. Its component of degree \((q,\mathbf u)\) is precisely
\[
E^{(j)}_{q,\mathbf u} = \frac{\mathfrak m^qJ^{\mathbf u}M+ J^{\mathbf u+\mathbf e_j}M} {\mathfrak m^{q+1}J^{\mathbf u}M+ J^{\mathbf u+\mathbf e_j}M}.
\]
Each such component is a finite-dimensional \(k\)-vector space, and its \(k\)-dimension equals its \(R\)-length.

Next, we  record the dimension estimate responsible for the bound \(D-1\). Put \(\overline R:=R/\operatorname{Ann}_R(M)\). The multi-Rees module \(\mathcal M\) is finite over the multi-Rees algebra of the images of the \(J_\ell\) in \(\overline R\). Consider the diagonal multiform module
\[
\mathscr S^\Delta(\mathbf J;M) := \frac{\mathcal M}{(J_1\cdots J_s)\mathcal M} = \bigoplus_{\mathbf u\in\mathbb N^s} \frac{J^{\mathbf u}M}{J^{\mathbf u+\mathbf1}M} \mathbf T^{\mathbf u}.
\]
It is finite over the diagonal multiform algebra of the images of the \(J_\ell\) in \(\overline R\). Notice that \(\operatorname{ht}_{\overline R} (\overline J_1\cdots\overline J_s)>0\): otherwise the product, and hence one of its factors, would be contained in a minimal prime of \(\overline R\), contrary to the hypotheses. Therefore that diagonal multiform algebra has dimension \(D+s-1\) (\cite[Section~5, before Definition~5.1]{CallejasBedregalJorgePerez}). Hence \(\dim \mathscr S^\Delta(\mathbf J;M)\leq D+s-1\). Since \(J^{\mathbf u+\mathbf1}M\subseteq J^{\mathbf u+\mathbf e_j}M\), degree by degree there is a natural homogeneous surjection \(\mathscr S^\Delta(\mathbf J;M) \twoheadrightarrow \mathscr S^{(j)}(\mathbf J;M)\). Consequently,
\begin{equation}
\dim_A\mathscr S^{(j)}(\mathbf J;M) \leq D+s-1.
\label{eq:dimension-directional-multiform}
\end{equation}
Passing to an associated graded module does not increase dimension (this follows, for instance, from the extended Rees-module construction). Thus
\begin{equation}
\dim_B E^{(j)}\leq D+s-1.
\label{eq:dimension-directional-associated-graded}
\end{equation}

To incorporate the summation in the \(\mathfrak m\)-degree, introduce an indeterminate \(Z\) of multidegree \(\deg Z=(1,\mathbf0)\) and define \(\widetilde B:=B[Z]\) and \(\widetilde E^{(j)}:=E^{(j)}\otimes_k k[Z]\). This is a finitely generated \(\mathbb N^{s+1}\)-graded \(\widetilde B\)-module, and
\(
\widetilde E^{(j)}_{r,\mathbf u} = \bigoplus_{q=0}^{r} E^{(j)}_{q,\mathbf u}Z^{r-q}.
\) It follows that
\begin{equation}
\dim_k\widetilde E^{(j)}_{r,\mathbf u} = \sum_{q=0}^{r}\dim_kE^{(j)}_{q,\mathbf u} = H_{\mathbf J,M}^{(j)}(r,\mathbf u).
\label{eq:cumulative-as-ordinary-hilbert-function}
\end{equation}
Furthermore, by \eqref{eq:dimension-directional-associated-graded}, \(\dim_{\widetilde B}\widetilde E^{(j)} \leq D+s\).
The multigraded Hilbert--Serre theorem, in its module form \cite[Theorem~4.1]{HerrmannHyryRibbeTang}, now shows that the right-hand side of \eqref{eq:cumulative-as-ordinary-hilbert-function} agrees, for \(r,\mathbf u\gg0\), with a polynomial. Its total degree is \(\dim\operatorname{Supp}_{++}(\widetilde E^{(j)})\), and hence is at most \(\dim\widetilde E^{(j)}-(s+1) \leq (D+s)-(s+1)=D-1\). This proves the asserted polynomiality and degree bound. The same multigraded Hilbert theorem shows that the coefficients of the highest-degree monomials, when normalized by factorials, are nonnegative integers. This is exactly the normalization used for mixed multiplicities of multigraded structures (see \cite[Section~2, especially Definition~2.1]{CallejasBedregalJorgePerez}).

It remains only to justify the indexing in \eqref{eq:proved-directional-polynomial-repetition}. Every monomial of total degree \(D-1\) has a unique expression \(r^{a_0}u_1^{a_1}\cdots u_s^{a_s}\) with \(a_0+\cdots+a_s=D-1\) and \(a_0,\ldots,a_s\in\mathbb N\). Define \(d_j:=a_j+1\) and \(d_\ell:=a_\ell\) for \(\ell\neq j\). Then \(d_j>0\) and \(|\mathbf d|=1+\sum_{\ell=1}^{s}a_\ell =D-a_0\), so \(1\leq|\mathbf d|\leq D\), and \(a_0=D-|\mathbf d|\), \(a_j=d_j-1\), \(a_\ell=d_\ell\) for \(\ell\neq j\). Conversely, every \(\mathbf d\) satisfying \(1\leq|\mathbf d|\leq D\) and \(d_j>0\) determines exactly one such monomial. Therefore this change of indices is a bijection, and the standard factorial normalization \(1/(a_0!a_1!\cdots a_s!)\) becomes
\[
\frac{1} {(D-|\mathbf d|)!\,(d_j-1)! \prod_{\ell\neq j}d_\ell!}.
\]
This proves both the uniqueness of the coefficients and formula \eqref{eq:proved-directional-polynomial-repetition}, as desired.
\end{proof}

\begin{definition}
\label{def:directional-mixed-multiplicities}
Let \(J_1,\ldots,J_s\) be proper ideals with
\(\operatorname{ht}_M(J_\ell)>0\) for every \(\ell\). Let
\(\mathbf d=(d_1,\ldots,d_s)\in\mathbb N^s\) satisfy
\(1\leq|\mathbf d|\leq D\), and choose
\(j\in\{1,\ldots,s\}\) such that \(d_j>0\). The coefficient
\(
c^{(j)}\bigl(J_1^{[d_1]},\ldots,J_s^{[d_s]};M\bigr)
\)
in Proposition~\ref{prop:directional-polynomiality} is called the
{\it \(j\)-directional mixed multiplicity} of \(M\), with respect to
\(J_1,\ldots,J_s\), of type \((d_1,\ldots,d_s)\). 

If \(d_i=1\), we
write \(J_i\) in place of \(J_i^{[1]}\). When convenient, we also write
\[
c^{(j)}\bigl(J_1^{[d_1]},\ldots,J_s^{[d_s]};M\bigr)
=c^{(j)}\bigl(
 \underbrace{J_1,\ldots,J_1}_{d_1\text{ times}},\ldots,
 \underbrace{J_s,\ldots,J_s}_{d_s\text{ times}};M\bigr).
\]
\end{definition}
As in the classical notation, \(J_\ell^{[d_\ell]}\) records
repetition, not exponentiation. If \(d_\ell=1\), the brackets are
omitted. In the arbitrary-ideal setting, however, we normally retain
the symbols \(J_\ell^{[0]}\), because the directional module depends
on the entire ordered tuple \((J_1,\ldots,J_s)\), even when the
\(\ell\)-th entry has repetition type zero. Such entries may be omitted
only when the ambient tuple is fixed and no ambiguity can arise. 
The distinguished direction must occur in the type. Thus the
directional coefficient in Definition~\ref{def:directional-mixed-multiplicities}
is defined only when
\(
 d_j>0.
\)

For comparison with multi-index notation, if \(i=|\mathbf d|\), then
\begin{equation}
 \begin{aligned}
 c^{(j)}\bigl(J_1^{[d_1]},\ldots,J_s^{[d_s]};M\bigr)
 &=c_{i,\mathbf d-\mathbf e_j}^{(j)}
 (J_1|\cdots|J_s;M).
 \end{aligned}
 \label{eq:translation-old-directional-notation}
\end{equation}
This identity is only a translation between indexing conventions; the
repetition notation will be used throughout.

\section{Structural properties}
\label{sec:structural-properties}

In this section, we collect the basic  properties of directional mixed
multiplicities.  They parallel the corresponding properties of the
Achilles--Manaresi sequence, but the proofs must keep track of the
distinguished direction.

\begin{lemma}
\label{lem:directional-additivity-exact-sequences}
Let \(0\longrightarrow M_1\longrightarrow M_2\longrightarrow M_3 \longrightarrow0\) be a short exact sequence of nonzero finitely generated \(R\)-modules such that \(\dim_RM_1=\dim_RM_2=\dim_RM_3=D\geq1.\) Suppose that \(J_1,\ldots,J_s\) are proper ideals satisfying \(\operatorname{ht}_{M_q}(J_\ell)>0\) (\(q=1,2,3,\;\ell=1,\ldots,s\)). If \(1\leq|\mathbf d|\leq D\) and \(d_j>0\), then
\begin{align*}
c^{(j)}\bigl(J_1^{[d_1]},\ldots,J_s^{[d_s]};M_2\bigr)
={}&c^{(j)}\bigl(J_1^{[d_1]},\ldots,J_s^{[d_s]};M_1\bigr)+c^{(j)}\bigl(J_1^{[d_1]},\ldots,J_s^{[d_s]};M_3\bigr).
\end{align*}
\end{lemma}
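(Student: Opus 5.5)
The plan is to compare the cumulative Hilbert functions of \eqref{eq:directional-Hilbert-function} for \(M_1,M_2,M_3\). I will show that
\[
H^{(j)}_{\mathbf J,M_2}(r,\mathbf u)-H^{(j)}_{\mathbf J,M_1}(r,\mathbf u)-H^{(j)}_{\mathbf J,M_3}(r,\mathbf u)
\]
agrees, for \(r,\mathbf u\gg0\), with a polynomial of total degree at most \(D-2\). Then the homogeneous components of degree \(D-1\) in Proposition~\ref{prop:directional-polynomiality} satisfy \(P^{(j)}_{M_2,D-1}=P^{(j)}_{M_1,D-1}+P^{(j)}_{M_3,D-1}\). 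Since the normalized coefficients are uniquely determined, the identity for every \(\mathbf d\) with \(d_j>0\) follows. The hypotheses \(\dim M_q=D\) and \(\operatorname{ht}_{M_q}(J_\ell)>0\) are exactly what is needed for all three polynomials to exist with the same normalization \(D-1\).

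\textbf{Step 1 (exact sequence).} Regard \(M_1\subseteq M_2\) and put \(N_{r,\mathbf u}:=\mathfrak m^{r+1}J^{\mathbf u}M_2+J^{\mathbf u+\mathbf e_j}M_2\). Since \(J^{\mathbf u}M_2\) maps onto \(J^{\mathbf u}M_3\) and \(N_{r,\mathbf u}\) maps onto the corresponding submodule for \(M_3\), there is an exact sequence
\[
0\to\frac{M_1\cap J^{\mathbf u}M_2}{M_1\cap N_{r,\mathbf u}}\to\frac{J^{\mathbf u}M_2}{N_{r,\mathbf u}}\to\frac{J^{\mathbf u}M_3}{\mathfrak m^{r+1}J^{\mathbf u}M_3+J^{\mathbf u+\mathbf e_j}M_3}\to0 .
\]
Hence \(H^{(j)}_{M_2}=H^{(j)}_{M_3}+L(r,\mathbf u)\), where \(L\) is the length of the left-hand term. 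Everything therefore reduces to showing that \(L-H^{(j)}_{M_1}\) has degree at most \(D-2\) eventually.

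\textbf{Step 2 (Artin--Rees in \(s+1\) directions).} The family \(\{M_1\cap\mathfrak m^aJ^{\mathbf u}M_2\}\) is a finitely generated module over the extended multi-Rees algebra \(R[\mathfrak mt,J_1T_1,\ldots,J_sT_s]\), being a submodule of the Rees module of \(M_2\). This gives a constant \(c\) with
\[
\mathfrak m^aJ^{\mathbf u}M_1\subseteq M_1\cap\mathfrak m^aJ^{\mathbf u}M_2\subseteq \mathfrak m^{a-c}J^{\mathbf u-c\mathbf 1}M_1
\]
for \(a\geq c\) and \(\mathbf u\geq c\mathbf1\), and similarly for the sums defining \(N_{r,\mathbf u}\). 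Using these inclusions, I will bound \(L\) above and below by combinations of lengths of the shape \(\lambda\bigl(J^{\mathbf v}M_1/(\mathfrak m^{b}J^{\mathbf v}M_1+J^{\mathbf w}M_1)\bigr)\), with \((b,\mathbf v)\) differing from \((r+1,\mathbf u)\) by bounded shifts and \(\mathbf w-\mathbf v=\mathbf e_j\) up to bounded shifts.

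\textbf{Step 3 (main obstacle).} The hard point is that these bounding quantities are not directly \(H^{(j)}_{M_1}\) at shifted arguments. The denominator involves \(J^{\mathbf w}M_1\) with \(\mathbf w-\mathbf v\) not exactly \(\mathbf e_j\), and lengths are not monotone when numerator and denominator move simultaneously. I would handle this by telescoping. Any such length splits, via chains \(J^{\mathbf v}M_1\supseteq J^{\mathbf v+\mathbf e_j}M_1\supseteq\cdots\) and chains stepping between \(J^{\mathbf v-c\mathbf 1}\) and \(J^{\mathbf v}\) one unit direction at a time, into a bounded number of terms. Each term is a directional Hilbert function \(H^{(\ell)}_{M_1}\) evaluated at a translate of \((r,\mathbf u)\), possibly for a direction \(\ell\neq j\).

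The terms that are not \(H^{(j)}_{M_1}\) at a translate should come in pairs of the form \(H^{(\ell)}(r,\mathbf u+\mathbf a)-H^{(\ell)}(r',\mathbf u+\mathbf a')\) with bounded \(\mathbf a-\mathbf a'\) and \(r-r'\). By Proposition~\ref{prop:directional-polynomiality}, each \(H^{(\ell)}_{M_1}\) is eventually a polynomial of degree at most \(D-1\). Its translates therefore have the same degree-\((D-1)\) component, so every such difference has degree at most \(D-2\).

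An alternative I would try if the bookkeeping fails is to work at the graded level. Here one identifies the left term of Step 1 as the cumulative Hilbert function of a finite graded \(\widetilde B\)-module. This module maps to \(\widetilde E^{(j)}(M_1)\) with kernel and cokernel annihilated by a power of the irrelevant ideal in the \(\mathbf u\)-directions. Such kernels and cokernels have \(\dim\operatorname{Supp}_{++}\) strictly smaller, and hence contribute only in degree at most \(D-2\) by \cite[Theorem~4.1]{HerrmannHyryRibbeTang}.
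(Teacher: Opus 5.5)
Your Step 1 is correct and matches the paper's exact sequence. The left-hand term is $F_{\mathbf u}/(M_1\cap N_{r,\mathbf u})$ with $F_{\mathbf u}=M_1\cap J^{\mathbf u}M_2$, i.e.\ $\mathscr S^{(j)}_{\mathcal F}(M_1)$ with the $\mathfrak m$-filtration induced from $\mathscr S^{(j)}(\mathbf J;M_2)$. The ``similarly'' in Step 2 is not automatic: $\bigoplus_{r,\mathbf u}N_{r,\mathbf u}$ is not finitely generated over $R[\mathfrak mt,J_1T_1,\ldots,J_sT_s]$. The clean fix, as in the paper, is Artin--Rees for $\mathscr S^{(j)}_{\mathcal F}(M_1)\subseteq\mathscr S^{(j)}(\mathbf J;M_2)$ over $\mathcal R(\mathbf J)$ with respect to $\mathfrak m\mathcal R(\mathbf J)$; there the numerator $F_{\mathbf u}$ is fixed, so a sandwich in $r$ is legitimate.

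\textbf{The genuine gap is Step 3.} Inclusions such as $J^{\mathbf u}M_1\subseteq F_{\mathbf u}\subseteq J^{\mathbf u-c\mathbf 1}M_1$ only give one-sided inequalities, and these lose a term of degree $D-1$. For instance, the natural upper bound of the shape you describe is
\[
L\leq\lambda_R\bigl(J^{\mathbf u-c\mathbf 1}M_1/(\mathfrak m^{r+1+c'}J^{\mathbf u-c\mathbf 1}M_1+J^{\mathbf u+\mathbf e_j}M_1)\bigr),
\]
with $c'$ an Artin--Rees constant. Its telescoped top part is $P^{(j)}_{\mathbf J,M_1,D-1}+c\sum_{\ell}P^{(\ell)}_{\mathbf J,M_1,D-1}$. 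The extra terms all have one sign, so they do not pair off, and no matching lower bound comes out of the inclusions. Bookkeeping cannot repair this, because the inclusions do not see the module structure that forces the answer.

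Your fallback is also false as stated. Take $R=k[[x,y]]$, $s=1$, $J_1=\mathfrak m$, $M_1=\mathfrak m\oplus0\subseteq M_2=R^2$ and $M_3=k\oplus R$; all hypotheses hold. Then $F_u=\mathfrak m^u\oplus0$ for $u\geq1$. The natural map $\mathfrak m^{u+1}/\mathfrak m^{u+2}\to\mathfrak m^{u}/\mathfrak m^{u+1}$ induced by $J^uM_1\subseteq F_u$ is zero. Its kernel and cokernel have lengths $u+2$ and $u+1$, so both are nonzero in every degree and have degree $D-1$; only their difference is small.

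\textbf{The missing idea} is an exact comparison governed by the discrepancy module $W:=\bigoplus_{\mathbf u}F_{\mathbf u}/J^{\mathbf u}M_1$, a finite graded $\mathcal R(\mathbf J)$-module.
\begin{enumerate}[label=\textup{(\arabic*)}]
\item The stabilization $F_{\mathbf c+\mathbf v}=J^{\mathbf v}F_{\mathbf c}$ gives $F_{\mathbf u}\subseteq J^{\mathbf u-\mathbf c}M_1$ for $\mathbf u\geq\mathbf c$. So $J^{\mathbf c}$ kills $W$ in those degrees, which is all that matters. Since $\sqrt{J^{\mathbf c}}=\sqrt{J_1\cdots J_s}$, this gives $\dim W\leq D+s-1$. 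Hence the cumulative $\mathfrak m$-adic function $H_W$ is eventually a polynomial of degree $\leq D-1$.
\item Apply the snake lemma to the inclusion of the row $0\to J^{\mathbf u+\mathbf e_j}M_1\to F_{\mathbf u+\mathbf e_j}\to W_{\mathbf u+\mathbf e_j}\to0$ into the row $0\to J^{\mathbf u}M_1\to F_{\mathbf u}\to W_{\mathbf u}\to0$. This yields
\[
0\to K\to\mathscr S^{(j)}(\mathbf J;M_1)\to\mathscr S^{(j)}_{\mathcal F}(M_1)\to C\to0,\qquad 0\to K\to W(\mathbf e_j)\to W\to C\to0,
\]
where every term is a finite $\mathcal R(\mathbf J)$-module of dimension $\leq D+s-1$.
\item On such sequences, cumulative $\mathfrak m$-adic functions are additive modulo degree $\leq D-2$, by the fixed-numerator Artin--Rees sandwich. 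So the functions of $\mathscr S^{(j)}(\mathbf J;M_1)$ and $\mathscr S^{(j)}_{\mathcal F}(M_1)$ differ, modulo degree $\leq D-2$, by $H_W(r,\mathbf u+\mathbf e_j)-H_W(r,\mathbf u)$. This difference itself has degree $\leq D-2$.
\end{enumerate}
Together with Step 1 and the $\mathfrak m$-direction Artin--Rees, this proves the lemma.

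The paper follows this two-stage route: stabilization in $\mathbf u$, then Artin--Rees only in $\mathfrak m$. However, its last step compares $\mathscr S^{(j)}(\mathbf J;F_{\mathbf c})$ with $\mathscr S^{(j)}(\mathbf J;M_1)$ via supports, which is worded like your fallback and needs the same repair. For example, take $M_1=(x,y^2)\oplus0$ in the setting above. With $c=2$ one gets $F_{\mathbf c}=\mathfrak m^2\oplus0$, and the induced map again has kernel and cokernel of lengths $u+2$ and $u+1$.
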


\begin{proof}
First, identify \(M_1\) with its image in \(M_2\), and let \(\pi:M_2\to M_3\) be the given surjection. For \(\mathbf u\in\mathbb N^s\), set \(F_{\mathbf u}:=M_1\cap J^{\mathbf u}M_2\).  Since \(\pi(J^{\mathbf u}M_2)=J^{\mathbf u}M_3\), the map \(\pi\) induces a surjection
\[
\overline\pi_{\mathbf u}: \frac{J^{\mathbf u}M_2}{J^{\mathbf u+\mathbf e_j}M_2} \longrightarrow \frac{J^{\mathbf u}M_3}{J^{\mathbf u+\mathbf e_j}M_3}.
\]
Because \(J^{\mathbf u+\mathbf e_j}M_2\subseteq J^{\mathbf u}M_2\), the modular law gives \(J^{\mathbf u}M_2\cap (M_1+J^{\mathbf u+\mathbf e_j}M_2) =F_{\mathbf u}+J^{\mathbf u+\mathbf e_j}M_2\). It follows that
\[
\ker(\overline\pi_{\mathbf u}) \cong \frac{F_{\mathbf u}} {F_{\mathbf u}\cap J^{\mathbf u+\mathbf e_j}M_2} =\frac{F_{\mathbf u}}{F_{\mathbf u+\mathbf e_j}}.
\]
Therefore, summing the resulting exact sequences over all \(\mathbf u\), we obtain
\begin{equation}
0\longrightarrow \mathscr S_{\mathcal F}^{(j)}(M_1) \longrightarrow \mathscr S^{(j)}(\mathbf J;M_2) \longrightarrow \mathscr S^{(j)}(\mathbf J;M_3) \longrightarrow0,
\label{eq:directional-exact-sequence-proof}
\end{equation}
where \(\mathscr S_{\mathcal F}^{(j)}(M_1) :=\bigoplus_{\mathbf u\in\mathbb N^s} \frac{F_{\mathbf u}}{F_{\mathbf u+\mathbf e_j}} \mathbf T^{\mathbf u}\).  Consider \(\mathcal R_{\mathcal F}(M_1) :=\bigoplus_{\mathbf u\in\mathbb N^s} F_{\mathbf u}\mathbf T^{\mathbf u}\). This is a multigraded submodule of the finite \(\mathcal R(\mathbf J)\)-module \(\mathcal R(\mathbf J;M_2)\). Since \(\mathcal R(\mathbf J)\) is Noetherian, \(\mathcal R_{\mathcal F}(M_1)\) is finitely generated. 

Now, choose homogeneous generators \(x_\nu\mathbf T^{\mathbf a_\nu}\), \(1\leq\nu\leq t\), and choose \(\mathbf c=(c_1,\ldots,c_s)\), with every \(c_\ell>0\), such that \(\mathbf a_\nu\leq\mathbf c\) for all \(\nu\). Finite generation then gives
\begin{equation}
F_{\mathbf c+\mathbf v}=J^{\mathbf v}F_{\mathbf c} \qquad(\mathbf v\in\mathbb N^s).
\label{eq:stable-intersection-filtration}
\end{equation}
Indeed, both sides equal \(\sum_\nu J^{\mathbf c+\mathbf v-\mathbf a_\nu}x_\nu\). Consequently, the tail of \(\mathscr S_{\mathcal F}^{(j)}(M_1)\) is a fixed multigraded translation of \(\mathscr S^{(j)}(\mathbf J;F_{\mathbf c})\).
Moreover, \(J^{\mathbf c}M_1\subseteq F_{\mathbf c}\subseteq M_1\). Thus \(Q:=M_1/F_{\mathbf c}\) is annihilated by \(J^{\mathbf c}\). Since every \(c_\ell>0\) and \(\operatorname{ht}_{M_1}(J_\ell)>0\), we have \(\dim_RQ\leq\dim_R(M_1/J^{\mathbf c}M_1)\leq D-1\). The kernels and cokernels in the comparison between the directional multiform modules of \(F_{\mathbf c}\) and \(M_1\) are supported on \(\operatorname{Supp}_R(Q)\). The multigraded Hilbert--Serre dimension estimate therefore shows that their contributions have total degree at most \(D-2\). Hence fixed translations and this lower-dimensional error do not change the degree-\((D-1)\) homogeneous component. We obtain
\begin{equation}
\bigl[P_{\mathcal F,M_1}^{(j)}\bigr]_{D-1} =\bigl[P_{\mathbf J,M_1}^{(j)}\bigr]_{D-1}.
\label{eq:intersection-filtration-top-part}
\end{equation}

It remains to handle the \(\mathfrak m\)-filtration. Put \(A=\mathscr S_{\mathcal F}^{(j)}(M_1)\), \(B=\mathscr S^{(j)}(\mathbf J;M_2)\), and \(C=\mathscr S^{(j)}(\mathbf J;M_3)\). Give \(B\) its \(\mathfrak m\)-adic filtration, give \(A\) the induced filtration \(E_rA:=A\cap\mathfrak m^rB\), and give \(C=B/A\) the quotient filtration. The latter is exactly the \(\mathfrak m\)-adic filtration, since \(\frac{\mathfrak m^rB+A}{A}=\mathfrak m^rC\). For every \(r\geq0\), there is an exact sequence
\[
0\longrightarrow\frac{E_rA}{E_{r+1}A} \longrightarrow\frac{\mathfrak m^rB}{\mathfrak m^{r+1}B} \longrightarrow\frac{\mathfrak m^rC}{\mathfrak m^{r+1}C} \longrightarrow0.
\]
Indeed, the kernel of the second map is \(\frac{\mathfrak m^rB\cap(\mathfrak m^{r+1}B+A)} {\mathfrak m^{r+1}B} =\frac{\mathfrak m^{r+1}B+E_rA}{\mathfrak m^{r+1}B} \cong\frac{E_rA}{E_{r+1}A}\). Thus passing to associated graded modules is exact here precisely because the filtration on \(A\) is the induced filtration.

Applying Artin--Rees over the relevant Noetherian multigraded algebra, with respect to its degree-zero ideal generated by \(\mathfrak m\), gives an integer \(a\geq0\), independent of the multidegree, such that \(\mathfrak m^rA\subseteq E_rA\subseteq\mathfrak m^{r-a}A\) (\(r\gg0\)). Therefore the cumulative Hilbert function defined by \(E_\bullet A\) is bounded between two fixed translations, in the \(r\)-variable, of the cumulative Hilbert function defined by the intrinsic \(\mathfrak m\)-adic filtration of \(A\). A fixed translation changes only terms of total degree at most \(D-2\). Hence
\begin{equation}
\bigl[P_{\operatorname{gr}_E(A)}\bigr]_{D-1} =\bigl[P_{\operatorname{gr}_{\mathfrak m}(A)}\bigr]_{D-1}.
\label{eq:induced-adic-comparison}
\end{equation}

Additivity of lengths in the exact associated graded sequence, followed by Hilbert--Serre additivity, provides
\[
P_{\mathbf J,M_2,D-1}^{(j)}(r,\mathbf u) =P_{\mathbf J,M_1,D-1}^{(j)}(r,\mathbf u) +P_{\mathbf J,M_3,D-1}^{(j)}(r,\mathbf u),
\]
where we used \eqref{eq:intersection-filtration-top-part} and
\eqref{eq:induced-adic-comparison} (see also \cite[Section~4.1]{BrunsHerzog}). 


Therefore, compare the coefficients of \(r^{D-|\mathbf d|}u_j^{d_j-1} \prod_{\ell\neq j}u_\ell^{d_\ell}\). The three coefficients have the same nonzero normalizing factor
\[
\frac{1}{(D-|\mathbf d|)!\,(d_j-1)! \prod_{\ell\neq j}d_\ell!}.
\]
Cancelling this factor proves the asserted formula.
\end{proof}


The next result shows that, in the one-ideal case, the directional construction recovers the classical multiplicity sequence.

\begin{proposition}
\label{prop:one-ideal-directional}
Let \(I\subsetneq R\) such that \(\operatorname{ht}_M(I)>0\). Then, for  $1\leq i\leq D$, $$c^{(1)}\bigl(I^{[i]};M\bigr)=c_i(I;M).$$
\end{proposition}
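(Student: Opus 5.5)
The proof compares the two Hilbert functions directly: for $s=1$ the directional construction reduces to the Achilles--Manaresi bigraded one, with the cumulative sum in the $I$-direction removed. Take $s=1$, $J_1=I$, $j=1$. Then $J^{u+\mathbf e_1}M=I^{u+1}M$, so $\mathscr S^{(1)}(I;M)=G_I(M)$. Comparing components gives
\[
E^{(1)}_{q,u}=\frac{\mathfrak m^qI^uM+I^{u+1}M}{\mathfrak m^{q+1}I^uM+I^{u+1}M}=\mathscr G(I;M)_{q,u}.
\]
Consequently $H^{(1)}_{I,M}(r,u)=\sum_{q\le r}\lambda_R(\mathscr G(I;M)_{q,u})$, and therefore
\[
h_{I,M}(r,u)=\sum_{b=0}^{u}H^{(1)}_{I,M}(r,b).
\]
The statement then follows by summing the polynomial $P^{(1)}_{I,M}$ of Proposition~\ref{prop:directional-polynomiality} over $b$ and comparing top-degree terms with \eqref{eq:multiplicity-sequence-polynomial}.

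First, by Proposition~\ref{prop:directional-polynomiality} there exist $r_0,b_0$ with $H^{(1)}_{I,M}(r,b)=P^{(1)}_{I,M}(r,b)$ for all $r\ge r_0$ and $b\ge b_0$. The quantifier ``$r,\mathbf u\gg0$'' means a joint threshold of this kind, and that is exactly what is needed. Next, split
\[
h_{I,M}(r,u)=\sum_{b<b_0}H^{(1)}_{I,M}(r,b)+\sum_{b=b_0}^{u}P^{(1)}_{I,M}(r,b).
\]
For each fixed $b<b_0$, the function $H^{(1)}_{I,M}(r,b)=\lambda_R\bigl(G_b/\mathfrak m^{r+1}G_b\bigr)$, with $G_b=I^bM/I^{b+1}M$, is the Hilbert--Samuel function of $G_b$. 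The module $G_b$ is annihilated by $I$, so $\dim G_b\le\dim M/IM\le D-1$ because $\operatorname{ht}_M(I)>0$. Hence this function is eventually a polynomial in $r$ of degree at most $D-1$. The first (finite) sum therefore contributes nothing in total degree $D$.

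For the second sum, use the standard fact that $\sum_{b=b_0}^{u}b^{i-1}=u^{i}/i+(\text{lower degree in }u)$. Each monomial $r^{D-i}u^{i-1}$ of degree $D-1$ thus contributes $r^{D-i}u^i/i$ in degree $D$, plus lower-degree terms. Monomials of $P^{(1)}_{I,M}$ of degree below $D-1$ contribute only in degree below $D$. By \eqref{eq:proved-directional-polynomial-repetition} with $s=1$, the degree-$(D-1)$ part of $P^{(1)}_{I,M}$ is
\[
\sum_{i=1}^{D}\frac{c^{(1)}(I^{[i]};M)}{(D-i)!\,(i-1)!}\,r^{D-i}u^{i-1}.
\]
Hence the degree-$D$ component of the polynomial agreeing with $h_{I,M}$ is
\[
\sum_{i=1}^{D}\frac{c^{(1)}(I^{[i]};M)}{(D-i)!\,i!}\,r^{D-i}u^{i}.
\]
Comparing coefficients of $r^{D-i}u^i$ for $1\le i\le D$ with \eqref{eq:multiplicity-sequence-polynomial} gives $c^{(1)}(I^{[i]};M)=c_i(I;M)$. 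This comparison also shows that $c_0(I;M)=0$ under the hypothesis $\operatorname{ht}_M(I)>0$, which is consistent with the restriction $i\ge1$.

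The main point requiring care is the bookkeeping of the second step. One must check that the boundary terms, namely the finitely many rows $b<b_0$ and the constants produced by summing from $b_0$ rather than from $0$, are all polynomials of total degree at most $D-1$. That is what the dimension bound $\dim G_b\le D-1$ provides. One must also check that the thresholds in $r$ and $u$ can be taken simultaneously, which the joint threshold of the first step guarantees. Everything else is a direct identification of the graded pieces.
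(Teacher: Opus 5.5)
Your proof is correct and follows the paper's argument. Both identify $\mathscr G^{(1)}(I;M)=\mathscr G(I;M)$, use $h_{I,M}(r,u)=\sum_{b\le u}H^{(1)}_{I,M}(r,b)$, and compare top-degree parts with the expansion in Proposition~\ref{prop:directional-polynomiality}. The only difference is direction: the paper takes the first difference $h_{I,M}(r,u)-h_{I,M}(r,u-1)$, so no boundary rows arise and $c_0(I;M)=0$ is merely cited. You instead sum $H^{(1)}_{I,M}$ over $b$, which requires the bound $\dim(I^bM/I^{b+1}M)\le D-1$ for the rows $b<b_0$ but gives $c_0(I;M)=0$ as a by-product.
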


\begin{proof}
For \(s=1\),
\[
 \mathscr S^{(1)}(I;M)=\bigoplus_{u\geq0}I^uM/I^{u+1}M
 =\operatorname{gr}_I(M),
\]
so \(\mathscr G^{(1)}(I;M)=\mathscr G(I;M)\). If \(h_{I,M}(r,u)\) is the
doubly cumulative function in
\eqref{eq:multiplicity-sequence-polynomial}, then
\[
 H_{I,M}^{(1)}(r,u)=h_{I,M}(r,u)-h_{I,M}(r,u-1).
\]
Taking the first difference in \(u\) of its degree-\(D\) homogeneous
part gives
\(
 \sum_{i=1}^{D}\frac{c_i(I;M)}{(D-i)!\,(i-1)!}
 r^{D-i}u^{i-1}.
\)
Now, using  \eqref{eq:proved-directional-polynomial-repetition} we have
the assertion. Moreover, \(c_0(I;M)=0\) because \(I\) has positive
height (\cite[Section~2]{PoliniTrungUlrichValidashti}).
\end{proof}

\subsection*{The
\texorpdfstring{\(\mathfrak m\)}{m}-primary case}

For \(\mathfrak m\)-primary ideals, the repetition notation agrees
exactly with the classical mixed-multiplicity notation for \(M\) in
every type of total degree \(D\). In particular, neither
\(\dim R=D\) nor faithfulness of \(M\) is required. The dimension
entering all types and factorials is always \(D=\dim_R M\).

\begin{theorem}
\label{thm:directional-primary-compatibility}
Assume that \(J_1,\ldots,J_s\) are \(\mathfrak m\)-primary. Let
\(
 \mathbf d=(d_1,\ldots,d_s)\in\mathbb N^s,
\)
and choose \(j\) with \(d_j>0\).

\begin{enumerate}[label=\textup{(\arabic*)}]
\item If \(1\leq|\mathbf d|<D\), then $c^{(j)}\bigl(J_1^{[d_1]},\ldots,J_s^{[d_s]};M\bigr)=0.$

\item If \(|\mathbf d|=D\), then
\begin{equation}
 c^{(j)}\bigl(J_1^{[d_1]},\ldots,J_s^{[d_s]};M\bigr)
 =e\bigl(J_1^{[d_1]},\ldots,J_s^{[d_s]};M\bigr).
 \label{eq:directional-equals-classical}
\end{equation}

\item If \(|\mathbf d|=D\), \(d_j>0\), and \(d_\ell>0\), then $c^{(j)}\bigl(J_1^{[d_1]},\ldots,J_s^{[d_s]};M\bigr)
 =c^{(\ell)}\bigl(J_1^{[d_1]},\ldots,J_s^{[d_s]};M\bigr).$
\end{enumerate}
\end{theorem}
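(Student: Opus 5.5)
In the $\mathfrak m$-primary case every module $J^{\mathbf u}M/J^{\mathbf u+\mathbf e_j}M$ has finite length. I would use this to remove the $\mathfrak m$-direction entirely and reduce everything to the Bhattacharya polynomial.

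Since $J_j$ is $\mathfrak m$-primary, there is $t$ with $\mathfrak m^t\subseteq J_j$. Then for $r\geq t-1$,
\[
\mathfrak m^{r+1}J^{\mathbf u}M\subseteq J_jJ^{\mathbf u}M=J^{\mathbf u+\mathbf e_j}M,
\]
uniformly in $\mathbf u$. By \eqref{eq:directional-Hilbert-function}, for $r\geq t-1$ and all $\mathbf u$,
\[
H^{(j)}_{\mathbf J,M}(r,\mathbf u)=\lambda_R\bigl(J^{\mathbf u}M/J^{\mathbf u+\mathbf e_j}M\bigr)
=B_{\mathbf J,M}(\mathbf u+\mathbf e_j)-B_{\mathbf J,M}(\mathbf u).
\]
This is independent of $r$. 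Hence for $r,\mathbf u\gg0$ the polynomial $P^{(j)}_{\mathbf J,M}(r,\mathbf u)$ equals $Q_{\mathbf J,M}(\mathbf u+\mathbf e_j)-Q_{\mathbf J,M}(\mathbf u)$. Two polynomials agreeing on a set of the form $\{r,\mathbf u\gg0\}$ are identical, so $P^{(j)}$ contains no positive power of $r$.

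\textbf{Part (1).} Every monomial in \eqref{eq:proved-directional-polynomial-repetition} with $|\mathbf d|<D$ carries $r^{D-|\mathbf d|}$ with a positive exponent. By uniqueness of the coefficients in Proposition~\ref{prop:directional-polynomiality}, these coefficients vanish.

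\textbf{Part (2).} I would compute the degree-$(D-1)$ part of the first difference $Q(\mathbf u+\mathbf e_j)-Q(\mathbf u)$. Only the degree-$D$ part $Q_{\mathbf J,M,D}$ contributes in degree $D-1$, and it contributes $\partial Q_{\mathbf J,M,D}/\partial u_j$. Lower-degree parts of $Q$ give differences of degree at most $D-2$. Using \eqref{eq:Bhattacharya-repetition}, the derivative is
\[
\sum_{|\mathbf d|=D,\ d_j>0}\frac{e(J_1^{[d_1]},\ldots,J_s^{[d_s]};M)}{(d_j-1)!\prod_{\ell\neq j}d_\ell!}\,u_j^{d_j-1}\prod_{\ell\neq j}u_\ell^{d_\ell}.
\]
Comparing with \eqref{eq:proved-directional-polynomial-repetition} at $|\mathbf d|=D$, where $(D-|\mathbf d|)!=1$, the normalizations match. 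Coefficient comparison then gives \eqref{eq:directional-equals-classical}.

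\textbf{Part (3).} This is immediate from (2), since both sides equal the same classical $e(J_1^{[d_1]},\ldots,J_s^{[d_s]};M)$.

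\textbf{Main obstacle.} The only delicate point is making the eventual identification of polynomials rigorous in the presence of the $r$-variable. The inclusion $\mathfrak m^t\subseteq J_j$ gives the $r$-independence uniformly in $\mathbf u$, so the two functions agree on a region $\{r\geq r_0,\ \mathbf u\geq\mathbf u_0\}$. On such a region, equality of polynomial functions forces equality of polynomials. The positive-height hypothesis is automatic here, since the $J_\ell$ are $\mathfrak m$-primary and $D\geq1$.
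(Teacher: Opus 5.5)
Your proposal is correct and follows essentially the same route as the paper: the inclusion $\mathfrak m^t\subseteq J_j$ removes the $r$-dependence and identifies $P^{(j)}_{\mathbf J,M}$ with the first difference $Q_{\mathbf J,M}(\mathbf u+\mathbf e_j)-Q_{\mathbf J,M}(\mathbf u)$, whose degree-$(D-1)$ part is $\partial Q_{\mathbf J,M,D}/\partial u_j$; comparing coefficients then gives (1) and (2), and (3) follows from (2). Your explicit remarks that polynomials agreeing on a region $\{r\geq r_0,\ \mathbf u\geq\mathbf u_0\}$ coincide, and that $\mathfrak m$-primary ideals automatically have positive height on $M$, fill in steps the paper leaves implicit.
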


\begin{proof}
Fix \(j\), and choose \(a\geq1\) such that
\(\mathfrak m^a\subseteq J_j\). For every \(r\geq a-1\),
\(
 \mathfrak m^{r+1}J^{\mathbf u}M
 \subseteq J_jJ^{\mathbf u}M=J^{\mathbf u+\mathbf e_j}M.
\)
Consequently,
\begin{align*}
 H_{\mathbf J,M}^{(j)}(r,\mathbf u)
 &=\lambda_R\left(\frac{J^{\mathbf u}M}
 {J^{\mathbf u+\mathbf e_j}M}\right)\\
 &=\lambda_R\bigl(M/J^{\mathbf u+\mathbf e_j}M\bigr)
 -\lambda_R\bigl(M/J^{\mathbf u}M\bigr)\\
 &=Q_{\mathbf J,M}(\mathbf u+\mathbf e_j)
 -Q_{\mathbf J,M}(\mathbf u).
\end{align*}
The eventual polynomial is independent of \(r\). In
\eqref{eq:proved-directional-polynomial-repetition}, every type with
\(|\mathbf d|<D\) is multiplied by the positive power
\(r^{D-|\mathbf d|}\); hence all such coefficients vanish.
The degree-\((D-1)\) homogeneous component of the finite difference is
\(\partial Q_{\mathbf J,M,D}/\partial u_j\). By
\eqref{eq:Bhattacharya-repetition}, this derivative is
\[
 \sum_{\substack{d_1+\cdots+d_s=D\\d_j>0}}
 \frac{e\bigl(J_1^{[d_1]},\ldots,J_s^{[d_s]};M\bigr)}
 {(d_j-1)!\prod_{\ell\neq j}d_\ell!}
 u_j^{d_j-1}\prod_{\ell\neq j}u_\ell^{d_\ell}.
\]
Now, using  \eqref{eq:proved-directional-polynomial-repetition} one has
\eqref{eq:directional-equals-classical}. Part~\textup{(3)} follows
because both directional coefficients equal the same classical mixed
multiplicity. 
\end{proof}

\begin{corollary}
\label{cor:primary-pure-types}
If \(J_1,\ldots,J_s\) are \(\mathfrak m\)-primary, then
\[
 c^{(j)}\bigl(
 J_1^{[0]},\ldots,J_j^{[D]},\ldots,J_s^{[0]};M
 \bigr)=e(J_j;M)
 \qquad(1\leq j\leq s).
\]
In particular, when \(s=1\),
\(
 c^{(1)}\bigl(J^{[D]};M\bigr)=c_D(J;M)=e(J;M).
\)
\end{corollary}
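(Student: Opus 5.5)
The corollary follows directly from Theorem~\ref{thm:directional-primary-compatibility}(2), combined with the classical identification \eqref{eq:pure-classical-type} of pure types. Fix $j$ and take $\mathbf d=D\,\mathbf e_j$, i.e.\ $d_j=D$ and $d_\ell=0$ for $\ell\neq j$. Then $|\mathbf d|=D$ and $d_j=D\geq1$, so $j$ is an admissible direction and the hypotheses of part~(2) of the theorem hold. That part gives
\[
c^{(j)}\bigl(J_1^{[0]},\ldots,J_j^{[D]},\ldots,J_s^{[0]};M\bigr)
=e\bigl(J_1^{[0]},\ldots,J_j^{[D]},\ldots,J_s^{[0]};M\bigr),
\]
and the right-hand side equals $e(J_j;M)$ by \eqref{eq:pure-classical-type}.

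For $s=1$, the specialization proceeds in two steps. The first equality, $c^{(1)}(J^{[D]};M)=c_D(J;M)$, is Proposition~\ref{prop:one-ideal-directional} with $i=D$. It applies because an $\mathfrak m$-primary ideal has positive height on $M$: its image in $\overline R$ is primary to the maximal ideal, and $\dim\overline R=D\geq1$. The second equality, $c_D(J;M)=e(J;M)$, is the remark after \eqref{eq:multiplicity-sequence-polynomial} on the $\mathfrak m$-primary case. Alternatively, it follows from the first part of this corollary with $s=1$.

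No step here is a real obstacle. The only points to check are that the pure type satisfies the constraint $d_j>0$ in Definition~\ref{def:directional-mixed-multiplicities}, and that $\mathfrak m$-primary ideals meet the positive-height hypothesis used throughout the paper.
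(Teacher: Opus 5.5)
Your proof is correct and follows the paper's intended route: the corollary is stated without a separate proof as an immediate consequence of Theorem~\ref{thm:directional-primary-compatibility}(2) applied to the pure type $\mathbf d=D\mathbf e_j$ together with \eqref{eq:pure-classical-type}, and the case $s=1$ comes from Proposition~\ref{prop:one-ideal-directional} at $i=D$ together with the $\mathfrak m$-primary remark on the multiplicity sequence. Your explicit check that an $\mathfrak m$-primary ideal satisfies $\operatorname{ht}_M>0$, because $\dim\overline R=D\geq1$, supplies a hypothesis the paper leaves implicit.
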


\begin{definition}
\label{def:directional-superficial-element}
Let \((R,\mathfrak m,k)\) be a Noetherian local ring, let \(M\) be a
nonzero finitely generated \(R\)-module, and let
\(\mathbf J=(J_1,\ldots,J_s)\) be a tuple of proper ideals.  Fix
\(j,\ell\in\{1,\ldots,s\}\), and put
\(
 A^{(j)}:=\mathscr G^{(j)}(\mathbf J;R),
 \,\,
 E^{(j)}:=\mathscr G^{(j)}(\mathbf J;M).
\)
For \(x\in J_\ell\), denote by \(x^\star\) the homogeneous element of
\(A^{(j)}\) of multidegree \((0,\mathbf e_\ell)\) induced by the class
of \(x\) in \(J_\ell/\mathfrak mJ_\ell\).  Set
\(
 \overline R:=R/xR,
 \,\,
 \overline M:=M/xM,
 \,\,
 \overline J_i:=J_i\overline R.
\)
We say that \(x\) is \emph{
\(j\)-directionally superficial in the \(\ell\)-th direction} for
\(\mathbf J\) with respect to \(M\) if the following conditions hold:

\begin{enumerate}[label=\textup{(\roman*)}]
\item \(x^\star\) is filter-regular on \(E^{(j)}\), that is,
\[
 \bigl(0:_{E^{(j)}}x^\star\bigr)_{q,\mathbf u}=0
 \qquad\text{for every }q\geq0
 \text{ and all }\mathbf u\gg\mathbf0;
\]

\item the canonical comparison homomorphism
\(
 \frac{E^{(j)}}{x^\star E^{(j)}}
 \longrightarrow
 \mathscr G^{(j)}(\overline{\mathbf J};\overline M)
\)
is an isomorphism in degree \((q,\mathbf u)\) for every \(q\geq0\)
and all \(\mathbf u\gg\mathbf0\).
\end{enumerate}

Equivalently, for every \(q\geq0\) and all sufficiently large
\(\mathbf u\), multiplication by \(x^\star\) and passage to \(M/xM\)
give an exact sequence in degree \((q,\mathbf u)\):
\begin{equation}
 0\longrightarrow
 E^{(j)}(-\mathbf e_\ell)
 \xrightarrow{\,x^\star\,}
 E^{(j)}
 \longrightarrow
 \mathscr G^{(j)}(\overline{\mathbf J};\overline M)
 \longrightarrow0,
 \label{eq:directional-superficial-exact-sequence}
\end{equation}
where the shift affects only the \(\mathbf u\)-grading.
This is the directional associated-graded form of the
colon--intersection definition of a superficial element in
\cite[Definition~17.2.1]{HunekeSwanson} (see also
\cite[Proposition~17.2.2 and Lemma~17.2.4]{HunekeSwanson}).
\end{definition}

The preceding definition is designed precisely so that passage to a suitable superficial quotient preserves the expected directional mixed multiplicities. The next theorem gives the corresponding reduction formula.

\begin{theorem}
\label{thm:directional-superficial-reduction}
Let \((R,\mathfrak m,k)\) be a Noetherian local ring, let \(M\) be a
nonzero finitely generated \(R\)-module of dimension \(D>1\), and let
\(J_1,\ldots,J_s\) be arbitrary proper ideals such that
\(
 \operatorname{ht}_M(J_i)>0
 \,\,(i=1,\ldots,s).
\)
Fix \(j,\ell\in\{1,\ldots,s\}\), and let \(x\in J_\ell\) be
\(j\)-directionally superficial in the \(\ell\)-th direction.  Assume
that \(x\) avoids the top-dimensional associated primes of \(M\), and
set
\(
 R':=R/xR,
 \,\,
 M':=M/xM,
 \,\,
 J_i':=J_iR'.
\)
Suppose that
\(
 \dim_{R'}M'=D-1,
 \,\,
 J_i'\subsetneq R',
 \,\,
 \operatorname{ht}_{M'}(J_i')>0
 \,\,(i=1,\ldots,s).
\)
Let \(\mathbf d=(d_1,\ldots,d_s)\in\mathbb N^s\) satisfy
\(
 1\leq |\mathbf d|\leq D,
 \,\,
 d_j>0,
 \,\,
 d_\ell>\delta_{j\ell},
\)
where \(\delta_{j\ell}\) is the Kronecker delta.  Then
\begin{equation}
 c_R^{(j)}\bigl(
 J_1^{[d_1]},\ldots,J_s^{[d_s]};M
 \bigr)
 =
 c_{R'}^{(j)}\bigl(
 (J_1')^{[d_1]},\ldots,
 (J_\ell')^{[d_\ell-1]},\ldots,
 (J_s')^{[d_s]};M'
 \bigr).
\label{eq:directional-superficial-reduction}
\end{equation}
Thus, if \(\ell\neq j\), it is enough that \(d_\ell>0\); if
\(\ell=j\), one must have \(d_j\geq2\), because one occurrence of
\(J_j\) is already used to specify the distinguished direction.
\end{theorem}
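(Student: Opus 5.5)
The plan is to compare the cumulative Hilbert functions of \(E^{(j)}=\mathscr G^{(j)}(\mathbf J;M)\) and of \(E'^{(j)}:=\mathscr G^{(j)}(\mathbf J';M')\) through the exact sequence \eqref{eq:directional-superficial-exact-sequence}, and then read off coefficients in \eqref{eq:proved-directional-polynomial-repetition}. Since the shift \(\mathbf e_\ell\) affects only the \(\mathbf u\)-grading, summing the exact sequence over \(q=0,\dots,r\) gives, for every \(r\geq0\) and all \(\mathbf u\gg\mathbf0\),
\[
 H^{(j)}_{\mathbf J',M'}(r,\mathbf u)
 =H^{(j)}_{\mathbf J,M}(r,\mathbf u)-H^{(j)}_{\mathbf J,M}(r,\mathbf u-\mathbf e_\ell).
\]
Both sides are eventually polynomial by Proposition~\ref{prop:directional-polynomiality}, applied to \(M\) and to \(M'\); the latter is allowed because \(\dim_{R'}M'=D-1\geq1\), \(J_i'\subsetneq R'\), and \(\operatorname{ht}_{M'}(J_i')>0\). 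Hence \(P^{(j)}_{\mathbf J',M'}(r,\mathbf u)=P^{(j)}_{\mathbf J,M}(r,\mathbf u)-P^{(j)}_{\mathbf J,M}(r,\mathbf u-\mathbf e_\ell)\) as polynomials.

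Next I compare top-degree components. The first difference in \(u_\ell\) of a polynomial of degree \(\leq D-1\) has degree \(\leq D-2\), and its degree-\((D-2)\) part equals \(\partial P^{(j)}_{\mathbf J,M,D-1}/\partial u_\ell\). Moreover, \(P^{(j)}_{\mathbf J',M'}\) has degree at most \((D-1)-1=D-2\), which is exactly the top degree for \(M'\). So the task reduces to differentiating \eqref{eq:proved-directional-polynomial-repetition} with respect to \(u_\ell\) and matching monomials.

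\emph{Case \(\ell\neq j\).} The monomial \(r^{D-|\mathbf d|}u_j^{d_j-1}\prod_{i\neq j}u_i^{d_i}\), with \(d_\ell>0\), differentiates to
\[
d_\ell\,r^{D-|\mathbf d|}u_j^{d_j-1}u_\ell^{d_\ell-1}\prod_{i\neq j,\ell}u_i^{d_i}.
\]
The factor \(d_\ell\) cancels \(d_\ell!\) down to \((d_\ell-1)!\). Put \(\mathbf d'=\mathbf d-\mathbf e_\ell\). Then \(|\mathbf d'|=|\mathbf d|-1\) and \((D-1)-|\mathbf d'|=D-|\mathbf d|\), so this is precisely the normalized monomial attached to type \(\mathbf d'\) for \(M'\) of dimension \(D-1\), with \(d'_j=d_j>0\) and \(|\mathbf d'|\geq1\) because \(d_j\geq1\).

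\emph{Case \(\ell=j\).} The exponent \(d_j-1\geq1\) produces the factor \(d_j-1\), which turns \((d_j-1)!\) into \((d_j-2)!=(d'_j-1)!\) with \(d'_j=d_j-1>0\). The factorials again match.

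In both cases, the correspondence \(\mathbf d\mapsto\mathbf d-\mathbf e_\ell\) is a bijection between the types contributing nonzero derivative terms and all admissible types for \(M'\). Uniqueness of the coefficients in Proposition~\ref{prop:directional-polynomiality} therefore yields \eqref{eq:directional-superficial-reduction}.

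The main obstacle is justifying the identity of Hilbert functions in the regime "\(r\gg0\) and \(\mathbf u\gg\mathbf0\)". Definition~\ref{def:directional-superficial-element} gives exactness for every \(q\) but only for \(\mathbf u\) beyond a threshold. I must check that this threshold is uniform in \(q\), so that the cumulative sums over \(q\leq r\) are exact for all large \(\mathbf u\) simultaneously. The definition as stated ("for every \(q\) and all \(\mathbf u\gg0\)") should be read with a uniform bound. If needed, I would obtain that uniformity from finite generation: \((0:_{E^{(j)}}x^\star)\) and the kernel and cokernel of the comparison map are finitely generated \(\mathbb N^{s+1}\)-graded modules. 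A finitely generated module vanishing in each fixed \(q\) for large \(\mathbf u\) is then killed by a power of the ideal generated by the \(\mathbf u\)-variables, which gives a uniform bound.

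The hypothesis that \(x\) avoids the top-dimensional associated primes guarantees \(\dim M'=D-1\), consistent with the assumption. It is only used to ensure that Proposition~\ref{prop:directional-polynomiality} applies to \(M'\) with \(D-1\) in place of \(D\).
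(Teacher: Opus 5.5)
Your proposal is correct and is essentially the paper's proof. Like the paper, you sum the superficial exact sequence over $0\leq q\leq r$ to obtain $P^{(j)}_{\mathbf J',M'}(r,\mathbf u)=P^{(j)}_{\mathbf J,M}(r,\mathbf u)-P^{(j)}_{\mathbf J,M}(r,\mathbf u-\mathbf e_\ell)$, identify the degree-$(D-2)$ part with $\partial/\partial u_\ell$ of the degree-$(D-1)$ form, and match factorials separately for $\ell\neq j$ and $\ell=j$.

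Your additional check that the $\mathbf u$-threshold is uniform in $q$ covers a point the paper passes over silently, and the finite-generation argument works. One correction: the annihilating ideal should be the one generated by the elements of multidegree $(0,\mathbf 1)$, not by all $\mathbf u$-variables. In any case uniformity is not essential, because for each fixed large $r$, agreement on an orthant in $\mathbf u$ already forces the polynomial identity.
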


\begin{proof}
Write
\(
 P_M^{(j)}(r,\mathbf u)
 :=P_{\mathbf J,M}^{(j)}(r,\mathbf u)
 \,\,\text{and}\,\,
 P_{M'}^{(j)}(r,\mathbf u)
 :=P_{\mathbf J',M'}^{(j)}(r,\mathbf u).
\) Taking the component of multidegree \((q,\mathbf u)\) in
\eqref{eq:directional-superficial-exact-sequence}, and then summing the
lengths for \(0\leq q\leq r\), gives, for all sufficiently large
\(r,\mathbf u\),
\[
 H_{\mathbf J',M'}^{(j)}(r,\mathbf u)
 =H_{\mathbf J,M}^{(j)}(r,\mathbf u)
  -H_{\mathbf J,M}^{(j)}
   (r,\mathbf u-\mathbf e_\ell).
\]
Consequently, the eventual Hilbert polynomials satisfy
\begin{equation}
 P_{M'}^{(j)}(r,\mathbf u)
 =P_M^{(j)}(r,\mathbf u)
  -P_M^{(j)}(r,\mathbf u-\mathbf e_\ell).
 \label{eq:directional-polynomial-difference}
\end{equation}

Since \(\dim_RM=D\), the homogeneous component of degree \(D-1\) of
\(P_M^{(j)}\) is
\begin{align*}
 \sum_{\substack{1\leq|\mathbf a|\leq D\\a_j>0}}
 \frac{c_R^{(j)}\bigl(
 J_1^{[a_1]},\ldots,J_s^{[a_s]};M\bigr)}
 {(D-|\mathbf a|)!\,(a_j-1)!\prod_{i\neq j}a_i!}
 r^{D-|\mathbf a|}u_j^{a_j-1}
 \prod_{i\neq j}u_i^{a_i}.
\end{align*}
Taking the first finite difference in \(u_\ell\) lowers the exponent
of \(u_\ell\) by one.  Its homogeneous component of degree \(D-2\)
is the partial derivative of the displayed degree-\((D-1)\) form with
respect to \(u_\ell\).
 If \(\ell\neq j\), the term of type \(\mathbf d\) contributes
\[
 \frac{c_R^{(j)}\bigl(
 J_1^{[d_1]},\ldots,J_s^{[d_s]};M\bigr)}
 {(D-|\mathbf d|)!\,(d_j-1)!
  (d_\ell-1)!\prod_{i\neq j,\ell}d_i!}
 r^{D-|\mathbf d|}u_j^{d_j-1}u_\ell^{d_\ell-1}
 \prod_{i\neq j,\ell}u_i^{d_i}.
\]
If \(\ell=j\), it contributes
\[
 \frac{c_R^{(j)}\bigl(
 J_1^{[d_1]},\ldots,J_s^{[d_s]};M\bigr)}
 {(D-|\mathbf d|)!\,(d_j-2)!
  \prod_{i\neq j}d_i!}
 r^{D-|\mathbf d|}u_j^{d_j-2}
 \prod_{i\neq j}u_i^{d_i}.
\]
On the other hand, \(\dim_{R'}M'=D-1\), and the coefficient of the
same monomial in the degree-\((D-2)\) homogeneous component of
\(P_{M'}^{(j)}\) is the right-hand side of
\eqref{eq:directional-superficial-reduction} divided by precisely the
same factorials, because
\(
 (D-1)-|\mathbf d-\mathbf e_\ell|=D-|\mathbf d|.
\)
Comparing coefficients in
\eqref{eq:directional-polynomial-difference} proves
\eqref{eq:directional-superficial-reduction}.
\end{proof}

\begin{remark}
\label{rem:directional-superficial-dimension-one}
Suppose that \(D=1\), take \(\ell=j\), and let \(x\in J_j\) be a
superficial parameter for \(J_j\) with respect to \(M\) in the sense
of \cite[Definition~17.2.1]{HunekeSwanson}.  Then the only admissible
pure type in the \(j\)-th direction is
\((0,\ldots,0,1,0,\ldots,0)\), and
\[
 c_R^{(j)}\bigl(
 J_1^{[0]},\ldots,J_j^{[1]},\ldots,J_s^{[0]};M
 \bigr)
 =\lambda_R(M/xM)-\lambda_R(0:_Mx).
\]
Indeed, positive height on the one-dimensional support makes the image
of \(J_j\) \(\mathfrak m\)-primary on \(M\); hence the left-hand side
is \(e(J_j;M)\), and the formula is the dimension-one case of
\cite[Theorem~17.4.6]{HunekeSwanson}.
\end{remark}

The next result gives the directional analogue of the classical associativity formula, reducing the invariant to the top-dimensional minimal components of \(M\).

\begin{theorem}
\label{thm:directional-additivity-reduction-formula}
Let \(M\) be a nonzero finitely generated \(R\)-module of dimension
\(D\geq1\), and let \(J_1,\ldots,J_s\) be proper ideals such that
\(
 \operatorname{ht}_M(J_\ell)>0
 \,\,(\ell=1,\ldots,s).
\)
Set
\(
 \Lambda(M):=\left\{
 \mathfrak p\in\operatorname{Min}(R/\operatorname{Ann}_R(M)):
 \dim(R/\mathfrak p)=D
 \right\}.
\)
Fix \(j\in\{1,\ldots,s\}\), and let
\(\mathbf d=(d_1,\ldots,d_s)\in\mathbb N^s\) satisfy
\(
 1\leq|\mathbf d|\leq D\) and $d_j>0.$
For \(\mathfrak p\in\Lambda(M)\), write
\(
 J_\ell(\mathfrak p)
 :=J_\ell(R/\mathfrak p)
 =\frac{J_\ell+\mathfrak p}{\mathfrak p}.
\)
Then
\begin{align}
 c^{(j)}\bigl(J_1^{[d_1]},\ldots,J_s^{[d_s]};M\bigr)
 ={}&
 \sum_{\mathfrak p\in\Lambda(M)}
 \lambda_{R_{\mathfrak p}}(M_{\mathfrak p})
 \cdot c^{(j)}\bigl(
 J_1(\mathfrak p)^{[d_1]},\ldots,
 J_s(\mathfrak p)^{[d_s]};R/\mathfrak p
 \bigr).
 \label{eq:directional-additivity-reduction-formula}
\end{align}
\end{theorem}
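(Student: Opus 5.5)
The plan is the standard associativity argument: a prime filtration of $M$, the additivity of Lemma~\ref{lem:directional-additivity-exact-sequences}, and the observation that factors of dimension $<D$ contribute nothing in degree $D-1$.

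\emph{Step 1: an extension of additivity.} I will first show that Lemma~\ref{lem:directional-additivity-exact-sequences} holds without assuming all three modules have dimension $D$. Fix a short exact sequence $0\to M_1\to M_2\to M_3\to0$ with $\dim M_2\le D$. Read the directional polynomials of all three modules in degree $D-1$. Set the coefficient of a module $N$ to be zero when $\dim N<D$. The proof of the lemma never used equality of dimensions, apart from the bound $\dim_R Q\le D-1$ for $Q=M_1/F_{\mathbf c}$. It relied only on the exact sequence \eqref{eq:directional-exact-sequence-proof}, the Artin--Rees comparison, and the dimension estimate \eqref{eq:dimension-directional-associated-graded}. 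Applied to a module $N$ of dimension $<D$, the proof of Proposition~\ref{prop:directional-polynomiality} shows that $H^{(j)}_{\mathbf J,N}$ has degree $\le \dim N-1<D-1$. Hence its degree-$(D-1)$ part vanishes.

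One caveat concerns the height hypothesis. If some $J_\ell$ lies in a minimal prime of a lower-dimensional factor $N$, the bound on $\dim\mathscr S^{(j)}(\mathbf J;N)$ must still be checked. Here I would use the trivial estimate $\dim \mathscr S^{(j)}(\mathbf J;N)\le \dim \mathcal R(\mathbf J;N)\le \dim N+s$. This gives polynomial degree $\le\dim N-1\le D-2$ in that case as well, so the vanishing in degree $D-1$ persists. I expect this bookkeeping, namely making sure that all lower-dimensional error terms (including those from the intersection filtration $F_{\mathbf u}$) stay in degree $\le D-2$, to be the main technical obstacle.

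\emph{Step 2: the filtration.} Choose a prime filtration $0=N_0\subset N_1\subset\cdots\subset N_t=M$ with $N_i/N_{i-1}\cong R/\mathfrak p_i$. Applying the extended additivity along this filtration gives
\[
c^{(j)}(\mathbf J^{[\mathbf d]};M)=\sum_{\dim R/\mathfrak p_i=D} c^{(j)}(\mathbf J^{[\mathbf d]};R/\mathfrak p_i).
\]
A prime with $\dim R/\mathfrak p_i=D$ contains $\operatorname{Ann}_R(M)$ and has maximal dimension, so it lies in $\Lambda(M)$. Localizing the filtration at $\mathfrak p\in\Lambda(M)$ shows that $\mathfrak p$ occurs exactly $\lambda_{R_\mathfrak p}(M_\mathfrak p)$ times. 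The reason is that the other $\mathfrak p_i$ of dimension $D$ differ from $\mathfrak p$, and the smaller-dimensional $\mathfrak p_i$ cannot be contained in $\mathfrak p$, so all other factors vanish after localizing.

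\emph{Step 3: identifying the summands.} For $\mathfrak p\in\Lambda(M)$, the directional modules of $R/\mathfrak p$ over $R$ and over $R/\mathfrak p$ with respect to $J_\ell(\mathfrak p)$ coincide. This is because $J^{\mathbf u}(R/\mathfrak p)=J(\mathfrak p)^{\mathbf u}$ and lengths over $R$ and over $R/\mathfrak p$ agree. Consequently
\[
c^{(j)}(\mathbf J^{[\mathbf d]};R/\mathfrak p)=c^{(j)}(J_1(\mathfrak p)^{[d_1]},\ldots;R/\mathfrak p).
\]
Finally, $\operatorname{ht}_M(J_\ell)>0$ means $J_\ell\not\subseteq\mathfrak p$ for every minimal $\mathfrak p$, so each $J_\ell(\mathfrak p)$ is a nonzero proper ideal of positive height in the domain $R/\mathfrak p$. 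The coefficients on the right of \eqref{eq:directional-additivity-reduction-formula} are therefore defined. Combining Steps 2 and 3 yields \eqref{eq:directional-additivity-reduction-formula}.
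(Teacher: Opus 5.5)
Your route is the paper's: a prime filtration of $M$, additivity of the ambient degree-$(D-1)$ forms along $0\to N_{a-1}\to N_a\to R/\mathfrak q_a\to0$, vanishing of the lower-dimensional factors, and the count $n_{\mathfrak p}=\lambda_{R_{\mathfrak p}}(M_{\mathfrak p})$ by localization. Steps~2 and~3 are correct; Step~3 also spells out the identification of the coefficient of $R/\mathfrak p$ over $R$ with the one over $R/\mathfrak p$, which the paper uses tacitly.

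\textbf{The gap.} The step that fails as written is your treatment of the caveat. In the proof of Proposition~\ref{prop:directional-polynomiality} the degree bound is $\dim\widetilde E^{(j)}-(s+1)$ with $\dim\widetilde E^{(j)}\leq\dim\mathscr S^{(j)}+1$, i.e.\ $\deg\leq\dim\mathscr S^{(j)}-s$. Your estimate $\dim\mathscr S^{(j)}(\mathbf J;N)\leq\dim\mathcal R(\mathbf J;N)\leq\dim N+s$ therefore only gives $\deg\leq\dim N$. When $\dim N=D-1$ this is $D-1$, which is exactly the degree you need to kill, not $\dim N-1$. The $-1$ in the proposition comes from passing to the quotient $\mathscr S^{(j)}=\mathcal R(\mathbf J;M)/J_j\mathcal R(\mathbf J;M)$, and bounding by $\dim\mathcal R(\mathbf J;N)$ discards exactly this.

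\textbf{The fix.} It is the one the paper uses. For a prime factor $N=R/\mathfrak q$ there is a clean dichotomy.
\begin{enumerate}[label=\textup{(\alph*)}]
\item If $J_\ell\not\subseteq\mathfrak q$ for all $\ell$, each $J_\ell(\mathfrak q)$ is a nonzero proper ideal of the domain $R/\mathfrak q$ and has positive height. Proposition~\ref{prop:directional-polynomiality} then applies with $\dim N$ in place of $D$ and gives degree $\leq\dim N-1$.
\item If some $J_\ell\subseteq\mathfrak q$, then $J_\ell N=0$, so $J^{\mathbf u}N=0$ whenever $u_\ell\geq1$. Hence $H^{(j)}_{\mathbf J,N}(r,\mathbf u)=0$ for $\mathbf u\gg\mathbf0$, and the eventual polynomial is zero, not merely of low degree.
\end{enumerate}

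\textbf{A uniform version.} This also covers the intermediate modules $N_a$, on which some $J_\ell$ may have height zero, so the proposition does not literally apply. Put $I=J_1\cdots J_s$ and $T=0:_NI^\infty$. Multigraded Artin--Rees gives $J^{\mathbf u}N\cap T\subseteq J^{\mathbf u-\mathbf c}T=0$ for $\mathbf u\gg\mathbf0$, so $H^{(j)}_{\mathbf J,N}$ and $H^{(j)}_{\mathbf J,N/T}$ agree eventually. Every $J_\ell$ has positive height on $N/T$, so the directional polynomial of every finitely generated $N$ has degree $\leq\dim N-1$, with no height hypothesis.

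\textbf{A small check you skipped.} When you apply the lemma with $M_1=N_{a-1}$ of dimension $D$, the bound $\dim Q\leq D-1$ still holds even if $\operatorname{ht}_{N_{a-1}}(J_\ell)=0$. Any $D$-dimensional prime of $\operatorname{Supp}N_{a-1}\subseteq\operatorname{Supp}M$ lies in $\Lambda(M)$, so it contains no $J_\ell$.
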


\begin{proof}
We first check that every coefficient on the right-hand side is
defined. Let \(\mathfrak p\in\Lambda(M)\). Such a prime is a
top-dimensional minimal prime of \(\operatorname{Supp}_R(M)\). If
\(J_\ell\subseteq\mathfrak p\), then
\(
 \operatorname{ht}_M(J_\ell)=0,
\)
contrary to the hypothesis. Hence \(J_\ell\not\subseteq\mathfrak p\).
Since \(R/\mathfrak p\) is a local domain, the nonzero proper ideal
\(J_\ell(\mathfrak p)\) has positive height. Thus the directional
multiplicity appearing in
\eqref{eq:directional-additivity-reduction-formula} is well defined. Choose a prime filtration of \(M\),
\begin{equation}
 0=N_0\subset N_1\subset\cdots\subset N_t=M,
 \qquad
 N_a/N_{a-1}\cong R/\mathfrak q_a,
 \label{eq:prime-filtration-M}
\end{equation}
where \(\mathfrak q_a\in\operatorname{Supp}_R(M)\). The existence of
such a filtration is standard for finite modules over Noetherian rings;
(\cite[Proposition~6.4]{AtiyahMacdonald}). Apply the proof of
Lemma~\ref{lem:directional-additivity-exact-sequences} successively to
the short exact sequences
\[
 0\longrightarrow N_{a-1}\longrightarrow N_a
 \longrightarrow R/\mathfrak q_a\longrightarrow0.
\]
The equal-dimension hypothesis in the statement of that lemma is used
only to express all top forms with the same normalization.  Here we
instead take, throughout the filtration, the homogeneous component of
ambient degree \(D-1\).  A factor \(R/\mathfrak q_a\) of dimension
strictly smaller than \(D\) has directional Hilbert polynomial of
degree at most
\(\dim(R/\mathfrak q_a)-1\leq D-2\), and hence contributes nothing.
If some \(J_\ell\subseteq\mathfrak q_a\), its eventual pieces also
vanish in the corresponding positive direction.  Additivity of the
ambient degree-\((D-1)\) forms therefore gives directly
\begin{align}
 c^{(j)}\bigl(J_1^{[d_1]},\ldots,J_s^{[d_s]};M\bigr)
 ={}&\sum_{\substack{1\leq a\leq t\\
                     \dim(R/\mathfrak q_a)=D}} c^{(j)}\bigl(
 J_1(\mathfrak q_a)^{[d_1]},\ldots,
 J_s(\mathfrak q_a)^{[d_s]};R/\mathfrak q_a
 \bigr).
 \label{eq:directional-prime-filtration-sum}
\end{align}

Every \(\mathfrak q_a\in\operatorname{Supp}_R(M)\) with
\(\dim(R/\mathfrak q_a)=D\) belongs to \(\Lambda(M)\). Hence
\eqref{eq:directional-prime-filtration-sum} becomes
\begin{align*}
 c^{(j)}\bigl(J_1^{[d_1]},\ldots,J_s^{[d_s]};M\bigr)
 =\sum_{\mathfrak p\in\Lambda(M)}
 n_{\mathfrak p}
 c^{(j)}\bigl(
 J_1(\mathfrak p)^{[d_1]},\ldots,
 J_s(\mathfrak p)^{[d_s]};R/\mathfrak p
 \bigr),
\end{align*}
where \(n_{\mathfrak p}\) is the number of occurrences of
\(R/\mathfrak p\) among the factors of the prime filtration.
Localizing \eqref{eq:prime-filtration-M} at
\(\mathfrak p\in\Lambda(M)\), a factor
\((R/\mathfrak q_a)_{\mathfrak p}\) can be nonzero only if
\(\mathfrak q_a\subseteq\mathfrak p\). Since
\(\mathfrak p\in\operatorname{Min}(\operatorname{Supp}_R M)\) and
\(\mathfrak q_a\in\operatorname{Supp}_R M\), this forces
\(\mathfrak q_a=\mathfrak p\). Thus all other factors vanish, while
each surviving factor is the residue field
\(\kappa(\mathfrak p)=R_{\mathfrak p}/\mathfrak pR_{\mathfrak p}\).
Additivity of length therefore gives
\(
 n_{\mathfrak p}
 =\lambda_{R_{\mathfrak p}}(M_{\mathfrak p}).
\)
Substituting this equality into the preceding sum proves
\eqref{eq:directional-additivity-reduction-formula}, as desired.
\end{proof}

\subsection*{Relation with the diagonal mixed multiplicities}

The diagonal multiform module used by Callejas--Bedregal and Jorge
P\'erez \cite{CallejasBedregalJorgePerez} is defined as
$$
\mathscr S^\Delta(\mathbf J;M):=\frac{\mathcal R(\mathbf J;M)}{(J_1\cdots J_s)\mathcal R(\mathbf J;M)}=\bigoplus_{\mathbf u\in\mathbb N^s}\frac{J^{\mathbf u}M}{J^{\mathbf u+\mathbf1}M}\mathbf T^{\mathbf u},\qquad \mathbf1=(1,\ldots,1).
$$

Write
\(c_{i,\boldsymbol\alpha}^{\Delta}
(J_1|\cdots|J_s;M)\), with
\(|\boldsymbol\alpha|=i-1\), for its coefficients in codimension
indexing.

\begin{proposition}
\label{prop:diagonal-directional-sum}
For \(1\leq i\leq D\) and
\(|\boldsymbol\alpha|=i-1\),
\begin{equation}
 c_{i,\boldsymbol\alpha}^{\Delta}(J_1|\cdots|J_s;M)
 =\sum_{j=1}^{s}
 c^{(j)}\bigl(
 J_1^{[\alpha_1]},\ldots,J_j^{[\alpha_j+1]},\ldots,
 J_s^{[\alpha_s]};M\bigr).
 \label{eq:diagonal-directional-repetition}
\end{equation}
\end{proposition}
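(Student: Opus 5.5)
The plan is to compare the two eventual polynomials directly, using a telescoping filtration of the diagonal quotient by directional quotients. For \(\mathbf u\in\mathbb N^s\) and \(0\leq j\leq s\), put \(\boldsymbol\varepsilon_j:=\mathbf e_1+\cdots+\mathbf e_j\), so \(\boldsymbol\varepsilon_0=\mathbf0\) and \(\boldsymbol\varepsilon_s=\mathbf1\). The chain
\[
 J^{\mathbf u}M\supseteq J^{\mathbf u+\boldsymbol\varepsilon_1}M\supseteq\cdots\supseteq J^{\mathbf u+\boldsymbol\varepsilon_s}M=J^{\mathbf u+\mathbf1}M
\]
gives multigraded submodules
\[
 \mathscr S^\Delta(\mathbf J;M)=N_0\supseteq N_1\supseteq\cdots\supseteq N_s=0,\qquad (N_{j})_{\mathbf u}=J^{\mathbf u+\boldsymbol\varepsilon_{j}}M/J^{\mathbf u+\mathbf1}M .
\]
Its successive quotients are
\[
 (N_{j-1}/N_j)_{\mathbf u}=\frac{J^{\mathbf u+\boldsymbol\varepsilon_{j-1}}M}{J^{\mathbf u+\boldsymbol\varepsilon_{j-1}+\mathbf e_j}M}=\mathscr S^{(j)}(\mathbf J;M)_{\mathbf u+\boldsymbol\varepsilon_{j-1}},
\]
so each quotient is a fixed multigraded translate of the \(j\)-directional multiform module. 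Each \(N_j\) is a graded submodule of a finite module over \(\mathcal R(\mathbf J)\), hence finitely generated.

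Next I pass to cumulative \(\mathfrak m\)-adic lengths, where exactness fails and must be controlled. Write \(B=N_0\) and \(C_j=N_{j-1}/N_j\). Additivity of length along the chain gives, degreewise,
\[
 \lambda\bigl(B_{\mathbf u}/\mathfrak m^{r+1}B_{\mathbf u}\bigr)=\sum_{j=1}^{s}\lambda\Bigl(\frac{N_{j-1}}{N_j+N_{j-1}\cap\mathfrak m^{r+1}B}\Bigr)_{\mathbf u}.
\]
By Artin--Rees over the Noetherian multi-Rees algebra with respect to \(\mathfrak m\mathcal R(\mathbf J)\), as in the proof of Lemma~\ref{lem:directional-additivity-exact-sequences}, there is an \(a\geq0\), independent of \(\mathbf u\), with
\[
 \mathfrak m^{r+1}N_{j-1}\subseteq N_{j-1}\cap\mathfrak m^{r+1}B\subseteq\mathfrak m^{r+1-a}N_{j-1}\qquad\text{for all }j.
\]
Hence the \(j\)-th summand lies between \(\lambda(C_j/\mathfrak m^{r+1-a}C_j)_{\mathbf u}\) and \(\lambda(C_j/\mathfrak m^{r+1}C_j)_{\mathbf u}\). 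The latter equals \(H^{(j)}_{\mathbf J,M}(r,\mathbf u+\boldsymbol\varepsilon_{j-1})\), and the former is the same function with \(r\) replaced by \(r-a\). Translations in \(r\) or \(\mathbf u\) by fixed amounts do not affect the degree-\((D-1)\) component. Since by Proposition~\ref{prop:directional-polynomiality} all polynomials involved have degree at most \(D-1\), this yields
\[
 \bigl[P^\Delta_{\mathbf J,M}\bigr]_{D-1}=\sum_{j=1}^{s}P^{(j)}_{\mathbf J,M,D-1}.
\]
This step is the main obstacle. The key point to verify is that the Artin--Rees constant is uniform in the multidegree, which holds because the \(N_j\) are finitely generated graded submodules over \(\mathcal R(\mathbf J)\).

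Finally I compare coefficients. In the normalization of \cite{CallejasBedregalJorgePerez}, the degree-\((D-1)\) part of \(P^\Delta_{\mathbf J,M}\) is
\[
 \sum_{i=1}^{D}\ \sum_{|\boldsymbol\alpha|=i-1}\frac{c^\Delta_{i,\boldsymbol\alpha}(J_1|\cdots|J_s;M)}{(D-i)!\,\alpha_1!\cdots\alpha_s!}\,r^{D-i}\mathbf u^{\boldsymbol\alpha}.
\]
In \eqref{eq:proved-directional-polynomial-repetition}, the type \(\mathbf d=\boldsymbol\alpha+\mathbf e_j\) has \(|\mathbf d|=i\) and multiplies exactly the monomial \(r^{D-i}\mathbf u^{\boldsymbol\alpha}\). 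Its normalizing factor is
\[
 (D-|\mathbf d|)!\,(d_j-1)!\prod_{\ell\neq j}d_\ell!=(D-i)!\,\boldsymbol\alpha!,
\]
the same factor as in the diagonal polynomial. Comparing coefficients of \(r^{D-i}\mathbf u^{\boldsymbol\alpha}\) on both sides therefore gives \eqref{eq:diagonal-directional-repetition}.
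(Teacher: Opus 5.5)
Your proof is correct and follows the paper's argument. Both use the telescoping chain $J^{\mathbf u}M\supseteq J^{\mathbf u+\mathbf e_1}M\supseteq\cdots\supseteq J^{\mathbf u+\mathbf 1}M$, whose successive quotients are fixed translates of the directional modules, together with Artin--Rees (uniform in $\mathbf u$) to pass from induced to intrinsic $\mathfrak m$-adic filtrations and the invariance of the degree-$(D-1)$ part under fixed translations. You also make explicit two points the paper leaves implicit: the sandwich estimate, and the fact that both factorial normalizations equal $(D-i)!\,\boldsymbol\alpha!$.
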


\begin{proof}
For each \(\mathbf u\),  consider the filtration
\[
 J^{\mathbf u}M
 \supset J^{\mathbf u+\mathbf e_1}M
 \supset J^{\mathbf u+\mathbf e_1+\mathbf e_2}M
 \supset\cdots\supset J^{\mathbf u+\mathbf1}M.
\]
The \(j\)-th successive quotient is
\(
 \frac{J^{\mathbf u+\mathbf e_1+\cdots+\mathbf e_{j-1}}M}
 {J^{\mathbf u+\mathbf e_1+\cdots+\mathbf e_j}M},
\)
which is a fixed multigraded translate of the module defining the
\(j\)-directional coefficient.  Additivity of the top homogeneous
parts, together with invariance under fixed translations, gives
\eqref{eq:diagonal-directional-repetition}.  The replacement of the
induced \(\mathfrak m\)-filtrations by their intrinsic
\(\mathfrak m\)-adic filtrations is justified by Artin--Rees, exactly
as in Lemma~\ref{lem:directional-additivity-exact-sequences}.
\end{proof}

In the $\mathfrak m$-primary case, the diagonal coefficients recover the classical mixed multiplicities. In particular, the following corollary recovers \cite[Proposition~5.2 (2b)]{CallejasBedregalJorgePerez}.

\begin{corollary}
\label{cor:diagonal-primary-case}
Assume that \(J_1,\ldots,J_s\) are \(\mathfrak m\)-primary.  If
\(|\boldsymbol\alpha|=D-1\), then
\begin{equation}
 c_{D,\boldsymbol\alpha}^{\Delta}(J_1|\cdots|J_s;M)
 =\sum_{j=1}^{s}
 e\bigl(J_1^{[\alpha_1]},\ldots,
 J_j^{[\alpha_j+1]},\ldots,J_s^{[\alpha_s]};M\bigr).
 \label{eq:diagonal-primary-repetition}
\end{equation}
If \(1\leq i<D\) and \(|\boldsymbol\alpha|=i-1\), then
\(
 c_{i,\boldsymbol\alpha}^{\Delta}(J_1|\cdots|J_s;M)=0.
\)
\end{corollary}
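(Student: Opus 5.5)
The corollary follows by combining Proposition~\ref{prop:diagonal-directional-sum} with Theorem~\ref{thm:directional-primary-compatibility}, so no new Hilbert-function computation is needed. Fix $1\leq i\leq D$ and $\boldsymbol\alpha\in\mathbb N^s$ with $|\boldsymbol\alpha|=i-1$. For each $j$, put $\mathbf d^{(j)}:=\boldsymbol\alpha+\mathbf e_j$. Then $d^{(j)}_j=\alpha_j+1>0$ and $|\mathbf d^{(j)}|=i$, so $1\leq|\mathbf d^{(j)}|\leq D$. Hence every summand on the right of \eqref{eq:diagonal-directional-repetition} is a well-defined $j$-directional mixed multiplicity, and Proposition~\ref{prop:diagonal-directional-sum} gives
\[
 c_{i,\boldsymbol\alpha}^{\Delta}(J_1|\cdots|J_s;M)=\sum_{j=1}^{s}c^{(j)}\bigl(J_1^{[d^{(j)}_1]},\ldots,J_s^{[d^{(j)}_s]};M\bigr).
\]

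Next I split into the two cases of the statement. If $i<D$, then $1\leq|\mathbf d^{(j)}|<D$ for every $j$. Part~(1) of Theorem~\ref{thm:directional-primary-compatibility} applies, with distinguished direction $j$ and $d^{(j)}_j>0$, and shows that each summand vanishes, so the diagonal coefficient is $0$. If $i=D$, then $|\mathbf d^{(j)}|=D$ with $d^{(j)}_j>0$. Part~(2) of the same theorem replaces each summand by the classical mixed multiplicity $e\bigl(J_1^{[\alpha_1]},\ldots,J_j^{[\alpha_j+1]},\ldots,J_s^{[\alpha_s]};M\bigr)$, and this is exactly \eqref{eq:diagonal-primary-repetition}.

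The two ingredients already carry all the substance, so no step here is a genuine obstacle. The only points to check are bookkeeping. First, the type written $J_j^{[\alpha_j+1]}$ in the proposition is the vector $\boldsymbol\alpha+\mathbf e_j$, whose total degree is $i$. Second, $\mathfrak m$-primary ideals have positive height on $M$ because $D\geq1$, so the hypotheses of Proposition~\ref{prop:diagonal-directional-sum} hold. Finally, to match \cite[Proposition~5.2 (2b)]{CallejasBedregalJorgePerez}, one only needs to observe that the codimension indexing used here places the top-degree diagonal terms at $i=D$.
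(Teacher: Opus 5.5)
Your proposal is correct and follows the paper's intended argument: the corollary is stated without a separate proof, as an immediate consequence of Proposition~\ref{prop:diagonal-directional-sum} combined with parts (1) and (2) of Theorem~\ref{thm:directional-primary-compatibility}. The paper carries out exactly this combination in Remark~\ref{rem:comparison-CBJP-product-formula}, and your check that $\mathfrak m$-primary ideals have positive height on $M$ fills in a hypothesis the paper leaves implicit.
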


\section{Directional \texorpdfstring{$(FC)$}{(FC)}-sequences and
Rees-type formulas}
\label{sec:directional-FC-Rees}

Keep the notation and hypotheses of
Section~\ref{sec:directional-mixed-multiplicities}.  Put
\(
 I:=J_1\cdots J_s,
 \,\,
 \overline M:=M/(0:_M I^\infty).
\) The positive-height assumption implies that
\(\dim_R\overline M=D\) and
\(\dim_R(0:_M I^\infty)<D\). Fix \(j\).  As in the proof of
Proposition~\ref{prop:directional-polynomiality}, set
\begin{equation}
 E^{(j)}(\mathbf J;M)
 :=\mathscr G^{(j)}(\mathbf J;M)\otimes_k k[Z],
 \qquad
 \deg Z=\mathbf e_0:=(1,\mathbf0).
 \label{eq:cumulative-standard-module}
\end{equation}
Then
\[
 \dim_k E^{(j)}(\mathbf J;M)_{(r,\mathbf u)}
 =H^{(j)}_{\mathbf J,M}(r,\mathbf u).
\]

Let
\(
 \mathbf d=(d_1,\ldots,d_s)\in\mathbb N^s,
 \,\,
 i:=|\mathbf d|,
 \,\,
 1\leq i\leq D,
 \,\,
 d_j>0,
\)
and set
\begin{equation}
 \boldsymbol\kappa^{(j)}(\mathbf d)
 :=(D-i,d_1,\ldots,d_j-1,\ldots,d_s)
 \in\mathbb N^{s+1}.
 \label{eq:directional-multitype}
\end{equation}
Its total degree is $D-1$.  In this notation,
\[
 c_R^{(j)}
 \bigl(J_1^{[d_1]},\ldots,J_s^{[d_s]};M\bigr)
 =e\!\left(
   E^{(j)}(\mathbf J;M);
   \boldsymbol\kappa^{(j)}(\mathbf d)
  \right).
\]

\begin{definition}
\label{def:directional-weak-FC-element}
Let $L$ be a finitely generated multigraded quotient of
$E^{(j)}(\mathbf J;M)$, and let $0\leq\ell\leq s$.  A homogeneous
element $\xi$ of degree $\mathbf e_\ell$ is called a
\emph{directional weak-$(FC)$-element in direction $\ell$ on $L$}
if
\begin{equation}
 (0:_L\xi)_{\mathbf n}=0
 \qquad\text{for every }\mathbf n\gg\mathbf0.
 \label{eq:directional-weak-FC-filter-regular}
\end{equation}
Equivalently, $\xi$ is filter-regular on $L$ with respect to the
multigraded irrelevant ideal.  When $\ell=0$, the element $\xi$ may
be represented by an element of $\mathfrak m$; when
$1\leq\ell\leq s$, it may be represented by an element of
$J_\ell$.
\end{definition}

This allow us to define the following:

\begin{definition}
\label{def:directional-FC-sequence}
A directional weak-$(FC)$-element $\xi$ on $L$ is called a
\emph{directional $(FC)$-element} if, in addition,
\begin{equation}
 \dim\operatorname{Supp}_{++}(L/\xi L)
 =\dim\operatorname{Supp}_{++}(L)-1.
 \label{eq:directional-FC-dimension-drop}
\end{equation}
An ordered homogeneous sequence
\(\boldsymbol\xi=(\xi_1,\ldots,\xi_t)\) is a directional
weak-$(FC)$-sequence, respectively a directional $(FC)$-sequence,
if, for every $q$, the element $\xi_q$ is a directional
weak-$(FC)$-element, respectively a directional $(FC)$-element, on
\[
 E^{(j)}(\mathbf J;M)/
 (\xi_1,\ldots,\xi_{q-1})E^{(j)}(\mathbf J;M).
\]
It has type $\boldsymbol\kappa=(\kappa_0,\ldots,\kappa_s)$ if
exactly $\kappa_\ell$ of its elements have degree
$\mathbf e_\ell$.
\end{definition}


\begin{lemma}
\label{lem:directional-difference-FC}
Let $L$ be as above and let $\xi$ be a directional weak-$(FC)$-element
of degree $\mathbf e_\ell$.  Then, for all $\mathbf n\gg\mathbf0$,
\[
 H_{L/\xi L}(\mathbf n)
 =H_L(\mathbf n)-H_L(\mathbf n-\mathbf e_\ell).
\]
Consequently, the Hilbert polynomials satisfy
\begin{equation}
 P_{L/\xi L}=\Delta_\ell P_L,
 \qquad
 \Delta_\ell P_L(\mathbf n)
 :=P_L(\mathbf n)-P_L(\mathbf n-\mathbf e_\ell).
 \label{eq:directional-FC-polynomial-difference}
\end{equation}
\end{lemma}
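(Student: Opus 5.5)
The plan is to use the four-term exact sequence given by multiplication by \(\xi\) on \(L\), and to show that its left end vanishes in large degrees. Since \(\xi\) is homogeneous of degree \(\mathbf e_\ell\), for every \(\mathbf n\in\mathbb N^{s+1}\) with \(n_\ell\geq1\) there is an exact sequence of finite-dimensional \(k\)-vector spaces
\[
 0\longrightarrow(0:_L\xi)_{\mathbf n-\mathbf e_\ell}
 \longrightarrow L_{\mathbf n-\mathbf e_\ell}
 \xrightarrow{\ \xi\ } L_{\mathbf n}
 \longrightarrow (L/\xi L)_{\mathbf n}\longrightarrow0 .
\]
Finite-dimensionality is automatic: \(L\) is a finitely generated multigraded module over the standard \(\mathbb N^{s+1}\)-graded \(k\)-algebra \(\widetilde B\) from the proof of Proposition~\ref{prop:directional-polynomiality}, being a quotient of \(E^{(j)}(\mathbf J;M)\). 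Taking alternating \(k\)-dimensions gives
\[
 H_{L/\xi L}(\mathbf n)=H_L(\mathbf n)-H_L(\mathbf n-\mathbf e_\ell)
 +\dim_k(0:_L\xi)_{\mathbf n-\mathbf e_\ell}.
\]

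Next I will show that the correction term vanishes for \(\mathbf n\gg\mathbf0\). By \eqref{eq:directional-weak-FC-filter-regular} there is a \(\mathbf c\in\mathbb N^{s+1}\) such that \((0:_L\xi)_{\mathbf m}=0\) whenever \(\mathbf m\geq\mathbf c\). If \(\mathbf n\geq\mathbf c+\mathbf e_\ell\), then \(\mathbf n-\mathbf e_\ell\geq\mathbf c\), so the correction term is zero. This gives the first identity for all sufficiently large \(\mathbf n\).

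For the polynomial statement, I will invoke the multigraded Hilbert--Serre theorem \cite[Theorem~4.1]{HerrmannHyryRibbeTang}, as in the proof of Proposition~\ref{prop:directional-polynomiality}. It applies to both \(L\) and \(L/\xi L\) and gives polynomials \(P_L\) and \(P_{L/\xi L}\) agreeing with \(H_L\) and \(H_{L/\xi L}\) for \(\mathbf n\gg\mathbf0\). Enlarging the threshold so that \(\mathbf n\) and \(\mathbf n-\mathbf e_\ell\) both lie in the range where \(H_L=P_L\), the identity becomes \(P_{L/\xi L}(\mathbf n)=P_L(\mathbf n)-P_L(\mathbf n-\mathbf e_\ell)\) on a set of the form \(\{\mathbf n\geq\mathbf N\}\). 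Such a translated orthant is Zariski dense in \(\mathbb Q^{s+1}\), so two polynomials agreeing there are equal. This yields \eqref{eq:directional-FC-polynomial-difference}.

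The only point needing care, and hence the main obstacle, is the bookkeeping of the shifts. One must make sure that "\(\mathbf n\gg\mathbf0\)" is taken large enough in every coordinate at once: for the filter-regularity vanishing at \(\mathbf n-\mathbf e_\ell\), and for the agreement of both Hilbert functions with their polynomials at \(\mathbf n\) and at \(\mathbf n-\mathbf e_\ell\). Once a single common threshold is fixed, the argument is formal.
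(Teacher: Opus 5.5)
Your proposal is correct and follows the same route as the paper. Both use the four-term exact sequence given by multiplication by $\xi$, kill $(0:_L\xi)$ in large multidegrees by filter-regularity, and then pass to Hilbert polynomials; you additionally make explicit the common threshold and the Zariski-density argument that the paper leaves implicit.
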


\begin{proof}
Note that the multiplication by $\xi$ gives the homogeneous exact sequence
\[
 0\longrightarrow (0:_L\xi)(-\mathbf e_\ell)
 \longrightarrow L(-\mathbf e_\ell)
 \xrightarrow{\,\xi\,}L
 \longrightarrow L/\xi L\longrightarrow0.
\]
By \eqref{eq:directional-weak-FC-filter-regular}, the first term has
zero components in all sufficiently large multidegrees.  Taking
lengths in a sufficiently large multidegree gives the asserted
difference formula, and hence the corresponding polynomial identity.
\end{proof}

\begin{definition}
\label{def:directional-mixed-multiplicity-system-FC}
A directional weak-$(FC)$-sequence
\(
 \boldsymbol\xi=(\xi_1,\ldots,\xi_{D-1})
\)
of type $\boldsymbol\kappa^{(j)}(\mathbf d)$ is called a
\emph{$j$-directional mixed-multiplicity system of type
$\mathbf d$} if
\begin{equation}
 \dim\operatorname{Supp}_{++}
 \left(
  E^{(j)}(\mathbf J;M)/
  \boldsymbol\xi E^{(j)}(\mathbf J;M)
 \right)=0.
 \label{eq:directional-system-zero-dimensional}
\end{equation}
If every member is a directional $(FC)$-element at the corresponding
stage, we call it a \emph{directional $(FC)$-system}.
\end{definition}

The next result characterizes the positive directional mixed
multiplicities in terms of directional \((FC)\)-systems and gives
their length realization.

\begin{theorem}[Existence criterion and coefficient formula]
\label{thm:directional-FC-existence}
Assume that \(k\) is infinite. Then the following conditions are
equivalent:
\begin{enumerate}[label=\textup{(\roman*)}]
\item
$
 c_R^{(j)}
 \bigl(J_1^{[d_1]},\ldots,J_s^{[d_s]};M\bigr)>0;
 $

\item there exists a \(j\)-directional \((FC)\)-system
\(\boldsymbol\xi\) of type \(\mathbf d\).
\end{enumerate}

When these conditions hold, every sufficiently general
\(j\)-directional \((FC)\)-system
\(\boldsymbol\xi\) of type \(\mathbf d\) satisfies
\begin{equation}
 c_R^{(j)}
 \bigl(J_1^{[d_1]},\ldots,J_s^{[d_s]};M\bigr)
 =
 \lambda_k\left(
 \left[
 \frac{E^{(j)}(\mathbf J;M)}
      {\boldsymbol\xi E^{(j)}(\mathbf J;M)}
 \right]_{\mathbf n}
 \right)
 \qquad(\mathbf n\gg\mathbf0).
 \label{eq:directional-FC-length-formula}
\end{equation}
In particular, the right-hand side is eventually constant and is
independent of the sufficiently general system.
\end{theorem}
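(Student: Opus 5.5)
The plan is to work entirely with the standard $\mathbb N^{s+1}$-graded module $E:=E^{(j)}(\mathbf J;M)$ over the standard graded algebra $\widetilde B$. By the proof of Proposition~\ref{prop:directional-polynomiality}, its Hilbert polynomial $P_E$ has total degree $\dim\operatorname{Supp}_{++}(E)\le D-1$. The coefficient $c^{(j)}$ is the normalized coefficient $e(E;\boldsymbol\kappa)$ of the monomial indexed by $\boldsymbol\kappa:=\boldsymbol\kappa^{(j)}(\mathbf d)$ in the degree-$(D-1)$ part. The key tool is Lemma~\ref{lem:directional-difference-FC}: if $\xi$ is a weak-$(FC)$-element of degree $\mathbf e_\ell$ on a quotient $L$, then $P_{L/\xi L}=\Delta_\ell P_L$. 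Hence the degree-$(D-2)$ part of $P_{L/\xi L}$ is $\partial_{u_\ell}$ of the degree-$(D-1)$ part of $P_L$, where $u_0=r$. Iterating along a weak-$(FC)$-sequence of type $\boldsymbol\kappa$ with $D-1$ elements sends the normalized coefficient $e(E;\boldsymbol\kappa)$ to the constant term of $P_{E/\boldsymbol\xi E}$. The factorials $(D-i)!\,(d_j-1)!\prod_{\ell\ne j}d_\ell!$ cancel exactly, because we differentiate $\kappa_\ell$ times in the $\ell$-th variable. Condition \eqref{eq:directional-system-zero-dimensional} says $P_{E/\boldsymbol\xi E}$ is a constant. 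Therefore, for any $j$-directional mixed-multiplicity system $\boldsymbol\xi$ of type $\mathbf d$, $\lambda_k([E/\boldsymbol\xi E]_{\mathbf n})=c^{(j)}(\ldots)$ for $\mathbf n\gg\mathbf 0$. This gives \eqref{eq:directional-FC-length-formula} and its independence of the system. In particular (ii)$\Rightarrow$(i): an $(FC)$-system has $\dim\operatorname{Supp}_{++}$ dropping by one at each of $D-1$ steps to $0$. So $\dim\operatorname{Supp}_{++}(E)=D-1$, the final quotient has nonempty $\operatorname{Supp}_{++}$, and the eventual constant length is positive.

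For (i)$\Rightarrow$(ii), the existence of general elements is the main issue. I would show by induction on the number of elements already chosen, with $k$ infinite, that for any finitely generated multigraded quotient $L$ and any $\ell$ there is a nonempty Zariski-open set of $\xi\in[\widetilde B]_{\mathbf e_\ell}$ that are filter-regular on $L$. The argument is prime avoidance over an infinite field. The set $\operatorname{Ass}(L)$ is finite, and we must avoid the associated primes $\mathfrak P$ not containing the irrelevant ideal $\widetilde B_{++}$. For such $\mathfrak P$, some degree-$\mathbf e_\ell$ element lies outside it, using that the irrelevant ideal is generated by products of the degree-$\mathbf e_\ell$ pieces. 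So $[\mathfrak P]_{\mathbf e_\ell}$ is a proper subspace. This produces $\xi$, represented in $\mathfrak m/\mathfrak m^2$ or $J_\ell/\mathfrak mJ_\ell$ (with the auxiliary $Z$-direction handled by allowing linear combinations with $Z$), as Definition~\ref{def:directional-weak-FC-element} requires.

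For the dimension drop \eqref{eq:directional-FC-dimension-drop}, I would also avoid the relevant minimal primes $\mathfrak P$ of $\operatorname{Supp}_{++}(L)$ of maximal dimension. Their $\mathbf e_\ell$-components are proper precisely when the top form of $P_L$ actually involves the variable $u_\ell$. This is where positivity enters. Since $c^{(j)}>0$, the monomial indexed by $\boldsymbol\kappa$ survives each differentiation. After choosing $\xi_1,\dots,\xi_{q-1}$ of the appropriate degrees, the top form of the current quotient still contains a monomial divisible by the next variable and remains of the expected degree $D-q$, so $\dim\operatorname{Supp}_{++}$ drops by exactly one. I would order the sequence by degree in some way and check that the coefficient bookkeeping is order-independent, since partial derivatives commute.

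The hardest step is justifying the link between ``the top form involves $u_\ell$'' and ``no top-dimensional relevant prime contains all of $[\widetilde B]_{\mathbf e_\ell}$.'' The first thing I would try is the multigraded Hilbert--Serre theory of \cite{HerrmannHyryRibbeTang}: for a relevant prime $\mathfrak P$ containing $\widetilde B_{\mathbf e_\ell}$, the quotient $\widetilde B/\mathfrak P$ has Hilbert function vanishing in degrees with $n_\ell>0$. Such primes therefore contribute nothing to $P_L$ for $\mathbf n\gg\mathbf 0$, so they cannot be in $\operatorname{Supp}_{++}$ at all. The resulting general elements form a finite intersection of nonempty opens, which yields the ``sufficiently general'' clause.
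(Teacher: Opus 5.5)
Your proof is correct. For (ii)$\Rightarrow$(i) and the length formula it coincides with the paper's argument. Iterating Lemma~\ref{lem:directional-difference-FC} gives $P_{E/\boldsymbol\xi E}=\Delta_0^{D-i}\Delta_1^{d_1}\cdots\Delta_j^{d_j-1}\cdots\Delta_s^{d_s}P_E=c^{(j)}$, and the nonempty zero-dimensional relevant support makes this constant positive.

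The routes differ in (i)$\Rightarrow$(ii). The paper gets generic filter-regular elements by the same avoidance of relevant associated primes that you use. It then cites the filter-regular positivity criterion of Manh--Vi\^et (Theorem~3.4 there) for the exact drop of $\dim\operatorname{Supp}_{++}$ at each step. You prove that drop directly:
\begin{itemize}
\item For any filter-regular $\xi$ of degree $\mathbf e_\ell$ one has $P_{L/\xi L}=\Delta_\ell P_L$, and its top form is $\partial_{u_\ell}$ of the top form of $P_L$.
\item Since $\partial_{u_\ell}$ is injective on monomials divisible by $u_\ell$, the monomial indexed by the unused part of $\boldsymbol\kappa^{(j)}(\mathbf d)$ keeps a positive coefficient.
\item The degree of the Hilbert polynomial equals $\dim\operatorname{Supp}_{++}$ by the Herrmann--Hyry--Ribbe--Tang Hilbert--Serre theorem, so it drops by exactly one.
\end{itemize}
Your route is self-contained and gives slightly more than the statement. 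Once $c^{(j)}>0$, every directional weak-$(FC)$-sequence of type $\boldsymbol\kappa^{(j)}(\mathbf d)$, taken in any order, is automatically a directional $(FC)$-system computing $c^{(j)}$. The infinite residue field is needed only to make filter-regular elements exist. Your remark that degree-$\mathbf e_0$ elements live in $(\mathfrak m/\mathfrak m^2)\oplus kZ$ is also more accurate than the paper's phrase ``may be represented by an element of $\mathfrak m$''.

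You should delete the detour in your third and fourth paragraphs rather than try to prove it. The asserted equivalence between ``the top form of $P_L$ involves $u_\ell$'' and ``the top-dimensional relevant primes have proper $\mathbf e_\ell$-components'' is false. As you note yourself in the filter-regularity step, no prime avoiding $\widetilde B_{++}$ can contain $[\widetilde B]_{\mathbf e_\ell}$. So the second condition holds unconditionally, and your ``hardest step'' only re-proves this.

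When $u_\ell$ is absent from the top form, what goes wrong is not avoidance but the size of the drop. Take a component such as a point times $\mathbb P^1$ in $\mathbb P^1\times\mathbb P^1$: a general $(1,0)$-form does not vanish on it, yet the intersection is empty rather than zero-dimensional. In the multigraded setting, avoiding minimal primes of maximal dimension therefore neither guarantees nor is needed for the exact drop. The surviving-monomial computation is the whole mechanism, and your proof is complete without the detour.
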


\begin{proof}
Set \(E:=E^{(j)}(\mathbf J;M)\), and let \(A\) be the standard
multigraded algebra acting on \(E\).

Assume first that
\(
 c_R^{(j)}
 \bigl(J_1^{[d_1]},\ldots,J_s^{[d_s]};M\bigr)>0.
\)
Then the multitype
\(\boldsymbol\kappa^{(j)}(\mathbf d)\) is effective for \(E\).
At each stage, let \(L\) be the quotient of \(E\) by the elements
already chosen. The elements of \(A_{\mathbf e_\ell}\) that fail to
be filter-regular on \(L\) lie in the union
\(
 \bigcup_{\substack{\mathfrak p\in\operatorname{Ass}_A(L)\\
                    \mathfrak p\not\supseteq A_{++}}}
 \mathfrak p\cap A_{\mathbf e_\ell}.
\)

For every direction occurring in the effective multitype, these are
proper \(k\)-linear subspaces of \(A_{\mathbf e_\ell}\). Since \(k\)
is infinite, their complement contains a nonempty Zariski-open set.

The filter-regular positivity criterion for multigraded mixed
multiplicities therefore allows the elements to be chosen
successively and sufficiently generally so that the relevant
dimension drops by one at every step; see
\cite[Theorem~3.4]{ManhViet}. Since
\[
 \bigl|\boldsymbol\kappa^{(j)}(\mathbf d)\bigr|=D-1,
\]
after \(D-1\) steps we obtain a \(j\)-directional \((FC)\)-system
\(\boldsymbol\xi\) of type \(\mathbf d\). This proves
\textup{(i)}\(\Rightarrow\)\textup{(ii)}.

Conversely, suppose that
\(\boldsymbol\xi=(\xi_1,\ldots,\xi_{D-1})\) is a
\(j\)-directional \((FC)\)-system of type \(\mathbf d\).
Applying Lemma~\ref{lem:directional-difference-FC} successively gives
\begin{equation}
 P_{E/\boldsymbol\xi E}
 =
 \Delta_0^{D-i}
 \Delta_1^{d_1}\cdots
 \Delta_j^{d_j-1}\cdots
 \Delta_s^{d_s}P_E.
 \label{eq:iterated-directional-difference}
\end{equation}
The total order of the difference operator on the right-hand side is
\(D-1\). By the factorial normalization of the top homogeneous part
of \(P_E\), its value is the constant
\(
 c_R^{(j)}
 \bigl(J_1^{[d_1]},\ldots,J_s^{[d_s]};M\bigr).
\)

By Definition~\ref{def:directional-mixed-multiplicity-system-FC},
the quotient \(E/\boldsymbol\xi E\) has nonempty relevant support of
dimension zero. Its Hilbert polynomial is therefore a positive
constant, and its Hilbert function agrees with that constant in all
sufficiently large multidegrees. Hence
\(
 c_R^{(j)}
 \bigl(J_1^{[d_1]},\ldots,J_s^{[d_s]};M\bigr)>0,
\)
which proves \textup{(ii)}\(\Rightarrow\)\textup{(i)} and, at the
same time, establishes
\eqref{eq:directional-FC-length-formula}.

Finally, the filter-regular and dimension-drop conditions are open
conditions in the corresponding homogeneous components. Thus the
same argument applies to every sufficiently general system of the
prescribed type. Since all such systems compute the same coefficient,
the eventual length in
\eqref{eq:directional-FC-length-formula} is independent of the
chosen sufficiently general system.
\end{proof}

\begin{remark}
\label{rem:positivity-effective-type}
The assumption that \(J_1,\ldots,J_s\) are proper ideals of positive
height does not imply that every directional mixed multiplicity is
positive. It guarantees the dimension bound needed to define the
directional coefficients, but positivity also depends on the
effectiveness of the prescribed multitype
\(\boldsymbol\kappa^{(j)}(\mathbf d)\).

For example, let \(R=k[[x,y]]\), \(M=R\), and \(D=2\). If
\(J_1=\mathfrak m\), then
\[
 c_R^{(1)}(J_1^{[1]};R)=c_1(J_1;R)=0,
\]
although \(J_1\) is proper and \(\mathfrak m\)-primary. Similarly,
if \(J_1=(x)\), then \(\operatorname{ht}(J_1)=\ell(J_1)=1\), and
\[
 c_R^{(1)}(J_1^{[2]};R)=c_2(J_1;R)=0.
\]
Thus properness and positive height do not guarantee positivity,
even for a type of total degree \(D\).

Accordingly, Theorem~\ref{thm:directional-FC-existence} shows that
positivity is not merely a technical hypothesis: it is precisely the
condition guaranteeing the existence of a directional
\((FC)\)-system of the prescribed type. When the coefficient
vanishes, one may still choose filter-regular or weak-\((FC)\)
elements in the active directions, but the required dimension drops
need not occur, and hence the resulting sequence need not be a
directional \((FC)\)-system.
\end{remark}

\begin{remark}
\label{rem:positivity-effective-type}
Theorem~\ref{thm:directional-FC-existence} is intentionally stated
for positive coefficients.  An infinite residue field guarantees a
generic filter-regular element in every active direction, but it does
not make every prescribed mixed type effective.  Thus infinitude of
$k$ does not by itself guarantee a directional $(FC)$-system of an
arbitrary type.  This is precisely the defect in definitions that
require every $J_\ell$ to retain positive height after every quotient.
\end{remark}

We next isolate the additional condition needed to turn the intrinsic
multigraded formula into a Hilbert--Samuel multiplicity on $M$.

\begin{definition}
\label{def:synchronized-Rees-realization}
Let $\boldsymbol\xi$ be a $j$-directional $(FC)$-system of type
$\mathbf d$.  Choose lifts of its elements and order them as
\(
 a_1,\ldots,a_{D-i}\in\mathfrak m,
 \,\,
 x_t\in J_{\ell_t}\,\,(1\leq t<i),
\)
where the multiset $\ell_1,\ldots,\ell_{i-1}$ contains $\ell\ne j$
exactly $d_\ell$ times and contains $j$ exactly $d_j-1$ times.

We say that $\boldsymbol\xi$ admits a
\emph{synchronized Rees realization in direction $j$} if there is
$z\in J_j$ such that, with
\(
 L=(a_1,\ldots,a_{D-i},x_1,\ldots,x_{i-1}),
 \,\,
 N=\overline M/L\overline M,
 \,\,
 Q=(L,z),
\)
the following conditions hold:
\begin{enumerate}[label=\textup{(R\arabic*)}]
\item $a_1,\ldots,a_{D-i},x_1,\ldots,x_{i-1}$ is a
      $Q$-superficial sequence on $\overline M$,
      $\dim_RN=1$, and $\bar z$ is a parameter on $N$;
\item $Q$ is a parameter ideal on $M$ (and hence on $\overline M$),
      and the lifted
      family is a joint reduction of
      $(\mathfrak m,J_1,\ldots,J_s)$ of type
      $(D-i,d_1,\ldots,d_s)$, that is,
\begin{align}
 \mathfrak m^{n_0}J^{\mathbf u}\overline M
 &=\sum_{h=1}^{D-i}
   a_h\mathfrak m^{n_0-1}J^{\mathbf u}\overline M
 +
   \sum_{t=1}^{i-1}
   x_t\mathfrak m^{n_0}
   J^{\mathbf u-\mathbf e_{\ell_t}}\overline M
   +z\mathfrak m^{n_0}
   J^{\mathbf u-\mathbf e_j}\overline M
 \label{eq:synchronized-joint-reduction}
\end{align}
for all $n_0\gg0$ and $\mathbf u\gg\mathbf0$;
\item the final multigraded quotient is synchronized with the
      one-ideal directional module of $(\bar z)$ on $N$, in the
      following numerical sense: for all sufficiently large
      $r,n$ and $\mathbf u$,
\begin{equation}
 \lambda_k\!\left(
 \left[
  \frac{E^{(j)}(\mathbf J;M)}
       {\boldsymbol\xi E^{(j)}(\mathbf J;M)}
 \right]_{(r,\mathbf u)}
 \right)
 =
 \lambda_R\!\left(
 \frac{\bar z^{\,n}N}
      {\mathfrak m^{r+1}\bar z^{\,n}N+
       \bar z^{\,n+1}N}
 \right).
 \label{eq:terminal-synchronization}
\end{equation}
\end{enumerate}
\end{definition}

The synchronization condition provides the precise bridge between the directional coefficient and the classical one-dimensional multiplicity theory. The next result shows that, under this compatibility, the directional mixed multiplicity is realized by the multiplicity of the corresponding joint reduction.

\begin{theorem}
\label{thm:directional-Rees-FC}
Assume that a $j$-directional $(FC)$-system of type $\mathbf d$
admits a synchronized Rees realization.  With the notation of
Definition~\ref{def:synchronized-Rees-realization}, one has
\begin{equation}
 c_R^{(j)}
 \bigl(J_1^{[d_1]},\ldots,J_s^{[d_s]};M\bigr)
 =c_1((\bar z);N)
 =e_R((\bar z);N)
 =e_R(Q;\overline M).
 \label{eq:directional-Rees-FC-formula}
\end{equation}
Moreover, modulo $L\overline M$, the joint-reduction equation gives
\begin{equation}
 \mathfrak m^{n_0}J^{\mathbf u}N
 =\bar z\mathfrak m^{n_0}
  J^{\mathbf u-\mathbf e_j}N
 \qquad(n_0\gg0,\ \mathbf u\gg\mathbf0).
 \label{eq:directional-terminal-joint-reduction}
\end{equation}
Therefore,
\begin{equation}
 c_R^{(j)}
 \bigl(J_1^{[d_1]},\ldots,J_s^{[d_s]};M\bigr)
 =e_R(Q;M).
 \label{eq:directional-Rees-on-M}
\end{equation}
\end{theorem}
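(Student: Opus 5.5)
The proof will chain four identifications: the intrinsic length realization, the synchronization condition (R3), the one-ideal comparison in dimension one, and the superficial-sequence passage from $N$ back to $\overline M$ and $M$.

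\textbf{Step 1: from the directional coefficient to $c_1((\bar z);N)$.} By Theorem~\ref{thm:directional-FC-existence}, the $(FC)$-system $\boldsymbol\xi$ satisfies
\[
c_R^{(j)}\bigl(J_1^{[d_1]},\ldots,J_s^{[d_s]};M\bigr)
=\lambda_k\bigl([E^{(j)}(\mathbf J;M)/\boldsymbol\xi E^{(j)}(\mathbf J;M)]_{(r,\mathbf u)}\bigr)
\]
for $(r,\mathbf u)\gg\mathbf0$. This is applied to the given system; if only "sufficiently general" systems are covered, I would note that the iterated-difference argument in its proof, \eqref{eq:iterated-directional-difference}, applies verbatim to any $(FC)$-system. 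Condition (R3) rewrites this eventual constant as
\[
\lambda_R\bigl(\bar z^{\,n}N/(\mathfrak m^{r+1}\bar z^{\,n}N+\bar z^{\,n+1}N)\bigr),
\]
which is exactly the directional cumulative function $H^{(1)}_{(\bar z),N}(r,n)$. Since $\dim N=1$ and $\bar z$ is a parameter on $N$, the ideal $(\bar z)$ has positive height on $N$. Proposition~\ref{prop:directional-polynomiality} (with $D=1$) then says this function is eventually the constant $c^{(1)}((\bar z)^{[1]};N)$. Proposition~\ref{prop:one-ideal-directional} identifies that constant with $c_1((\bar z);N)$. The ideal $(\bar z)N$ is $\mathfrak m$-primary on $N$, so $c_1=e_R((\bar z);N)$ by the remark after \eqref{eq:multiplicity-sequence-polynomial}, or by Corollary~\ref{cor:primary-pure-types}.

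\textbf{Step 2: from $N$ to $\overline M$.} I would show $e_R((\bar z);N)=e_R(Q;\overline M)$. Condition (R1) makes $a_1,\ldots,a_{D-i},x_1,\ldots,x_{i-1}$ a $Q$-superficial sequence of length $D-1$ on $\overline M$, with $\dim\overline M=D$. Iterating the standard fact \cite[Lemma~17.2.4/Proposition~11.1.9]{HunekeSwanson}, namely that $e(Q;M')=e(Q/(y);M'/yM')$ for a $Q$-superficial $y$ when $\dim M'\ge2$, gives $e(Q;\overline M)=e(Q;N)=e((\bar z);N)$. The last equality holds because $Q N=\bar zN$.

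The joint-reduction identity \eqref{eq:directional-terminal-joint-reduction} follows by reducing \eqref{eq:synchronized-joint-reduction} modulo $L\overline M$. Each term with a factor $a_h$ or $x_t$ lies in $L\overline M$, leaving only the $z$-term.

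\textbf{Step 3: from $\overline M$ to $M$.} Since $\dim_R(0:_MI^\infty)<D$ and $Q$ is a parameter ideal on $M$ by (R2), additivity of $e(Q;-)$ on $0\to(0:_MI^\infty)\to M\to\overline M\to0$ gives $e(Q;M)=e(Q;\overline M)$. The lower-dimensional kernel contributes nothing in degree $D$; this is the associativity formula \eqref{eq:classical-module-associativity} for the single ideal $Q$.

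The main obstacle is the superficial-sequence reduction in Step 2. The standard lemma requires care in the last step, where $\dim=2\to1$ and the term $\lambda(0:y)$ could interfere. The correction vanishes there: the colon is of finite length only in the dimension-one step, and the usual statement holds for $\dim\ge2$, which is exactly where it is needed. One must also check that superficiality on $\overline M$ survives each successive quotient, which is how (R1) is phrased. I would also ensure the use of Theorem~\ref{thm:directional-FC-existence} for the specific, not merely generic, system is legitimate, as discussed in Step 1.
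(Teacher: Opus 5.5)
Your proposal is correct and follows the paper's proof essentially step for step. The steps are: the length realization from Theorem~\ref{thm:directional-FC-existence}; (R3) to identify the eventual constant with $H^{(1)}_{(\bar z),N}(r,n)$; the one-ideal and $\mathfrak m$-primary compatibility to get $c_1((\bar z);N)=e_R((\bar z);N)$; iterated $Q$-superficial reduction, used only in dimension $\geq 2$, to reach $e_R(Q;\overline M)$ via $QN=\bar zN$; reduction of (R2) modulo $L\overline M$; and additivity on $0\to(0:_M I^\infty)\to M\to\overline M\to 0$. One addition is welcome: you note that the iterated-difference argument in the (ii)$\Rightarrow$(i) part of Theorem~\ref{thm:directional-FC-existence} applies to the given system, not just to a sufficiently general one, which the paper leaves implicit.
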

\begin{proof}
Set $E:=E^{(j)}(\mathbf J;M)$. Since $\boldsymbol\xi$ is a $j$-directional $(FC)$-system of type $\mathbf d$, Theorem~\ref{thm:directional-FC-existence} gives
\begin{equation}
c_R^{(j)}\bigl(J_1^{[d_1]},\ldots,J_s^{[d_s]};M\bigr) = \lambda_k\left(\left[ \frac{E}{\boldsymbol\xi E} \right]_{(r,\mathbf u)}\right) \qquad(r,\mathbf u\gg0).
\label{eq:directional-Rees-terminal-length}
\end{equation}

By the synchronization condition \textup{(R3)}, the right-hand side of \eqref{eq:directional-Rees-terminal-length} agrees, for all sufficiently large $r,n$, and $\mathbf u$, with
\begin{equation}
\lambda_R\left( \frac{\bar z^{\,n}N}{\mathfrak m^{r+1}\bar z^{\,n}N+ \bar z^{\,n+1}N} \right).
\label{eq:directional-Rees-one-ideal-function}
\end{equation}
The expression in \eqref{eq:directional-Rees-one-ideal-function} is precisely the cumulative directional Hilbert function of the one-ideal filtration defined by $(\bar z)$ on $N$; namely, $H_{(\bar z),N}^{(1)}(r,n) = \lambda_R\left( \frac{\bar z^{\,n}N}{\mathfrak m^{r+1}\bar z^{\,n}N+ \bar z^{\,n+1}N} \right)$. Consequently,
\begin{equation}
c_R^{(j)}\bigl(J_1^{[d_1]},\ldots,J_s^{[d_s]};M\bigr) = H_{(\bar z),N}^{(1)}(r,n) \qquad(r,n\gg0).
\label{eq:directional-equals-terminal-Hilbert-function}
\end{equation}

By condition \textup{(R1)}, one has $\dim_RN=1$, and $\bar z$ is a parameter on $N$. Hence $(\bar z)$ is an ideal of definition for $N$. The one-ideal compatibility and the $\mathfrak m$-primary case therefore give $H_{(\bar z),N}^{(1)}(r,n) = c_1((\bar z);N) = e_R((\bar z);N)$ for $r,n\gg0$. Combining this equality with \eqref{eq:directional-equals-terminal-Hilbert-function}, we obtain
\begin{equation}
c_R^{(j)}\bigl(J_1^{[d_1]},\ldots,J_s^{[d_s]};M\bigr) = c_1((\bar z);N) = e_R((\bar z);N).
\label{eq:directional-terminal-multiplicity}
\end{equation}

We next compare the terminal multiplicity with the multiplicity of $Q$ on $\overline M$. Write $y_1,\ldots,y_{D-1} := a_1,\ldots,a_{D-i},x_1,\ldots,x_{i-1}$. By condition \textup{(R1)}, the sequence $y_1,\ldots,y_{D-1}$ is $Q$-superficial on $\overline M$, and $N = \overline M/(y_1,\ldots,y_{D-1})\overline M$ has dimension one. Repeated application of the superficial reduction formula for Hilbert--Samuel multiplicity gives
\begin{equation}
e_R(Q;\overline M)=e_R(QN;N).
\label{eq:superficial-reduction-to-N}
\end{equation}
Since the elements of $L$ vanish on $N$, we have $QN=(L,z)N=\bar zN$. Therefore,
\begin{equation}
e_R(Q;\overline M) = e_R(QN;N) = e_R((\bar z);N).
\label{eq:terminal-equals-Q-multiplicity}
\end{equation}
Equations \eqref{eq:directional-terminal-multiplicity} and \eqref{eq:terminal-equals-Q-multiplicity} prove $c_R^{(j)}\bigl(J_1^{[d_1]},\ldots,J_s^{[d_s]};M\bigr) = c_1((\bar z);N) = e_R((\bar z);N) = e_R(Q;\overline M)$, which is \eqref{eq:directional-Rees-FC-formula}.

We now prove the terminal joint-reduction equality. Reducing \eqref{eq:synchronized-joint-reduction} modulo $L\overline M$, all the summands involving $a_h$ or $x_t$ vanish, because $a_h,x_t\in L$. Thus only the term involving $z$ remains, and we obtain
\[
\mathfrak m^{n_0}J^{\mathbf u}N = \bar z\,\mathfrak m^{n_0} J^{\mathbf u-\mathbf e_j}N \qquad(n_0\gg0,\ \mathbf u\gg\mathbf0).
\]
This proves \eqref{eq:directional-terminal-joint-reduction}.

Finally, put $T:=0:_M I^\infty$. By the positive-height hypothesis, $\dim_RT<D$ and $\dim_RM=\dim_R\overline M=D$. Condition \textup{(R2)} says that $Q$ is a parameter ideal on $M$. In particular, $Q$ is an ideal of definition for $M$ and for $\overline M$. Applying the additivity theorem for Hilbert--Samuel multiplicity to the exact sequence $0\longrightarrow T \longrightarrow M \longrightarrow\overline M \longrightarrow0$, the module $T$ contributes nothing to the degree-$D$ part of the Hilbert--Samuel polynomial, since $\dim_RT<D$. Hence
\begin{equation}
e_R(Q;M)=e_R(Q;\overline M).
\label{eq:multiplicity-removes-I-torsion}
\end{equation}
Combining \eqref{eq:multiplicity-removes-I-torsion} with \eqref{eq:directional-Rees-FC-formula} gives $c_R^{(j)}\bigl(J_1^{[d_1]},\ldots,J_s^{[d_s]};M\bigr) = e_R(Q;M)$, which proves \eqref{eq:directional-Rees-on-M}.
\end{proof}

\begin{corollary}
\label{cor:top-directional-Rees-FC}
Suppose that $|\mathbf d|=D$ and that a $j$-directional $(FC)$-system
\(
 \boldsymbol\xi=(\xi_1,\ldots,\xi_{D-1})
\)
admits a synchronized Rees realization with lifts
$x_t\in J_{\ell_t}$ and terminal element $z\in J_j$.  Then, for
$Q=(x_1,\ldots,x_{D-1},z)$,
\[
 c_R^{(j)}
 \bigl(J_1^{[d_1]},\ldots,J_s^{[d_s]};M\bigr)
 =e_R(Q;M),
\]
and
\[
 J^{\mathbf u}\overline M
 =\sum_{t=1}^{D-1}
   x_tJ^{\mathbf u-\mathbf e_{\ell_t}}\overline M
  +zJ^{\mathbf u-\mathbf e_j}\overline M
 \qquad(\mathbf u\gg\mathbf0).
\]
\end{corollary}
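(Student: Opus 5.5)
The corollary is the case \(i=|\mathbf d|=D\) of Theorem~\ref{thm:directional-Rees-FC}, so the plan is to specialize that theorem and its joint-reduction condition. When \(|\mathbf d|=D\) the multitype \(\boldsymbol\kappa^{(j)}(\mathbf d)=(0,d_1,\ldots,d_j-1,\ldots,d_s)\) has zero \(\mathfrak m\)-component. Hence the system \(\boldsymbol\xi\) contains no elements of degree \(\mathbf e_0\), and in Definition~\ref{def:synchronized-Rees-realization} the list \(a_1,\ldots,a_{D-i}\) is empty. Therefore \(L=(x_1,\ldots,x_{D-1})\) and \(Q=(L,z)=(x_1,\ldots,x_{D-1},z)\), which is exactly the ideal in the statement. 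The first displayed equality then follows verbatim from \eqref{eq:directional-Rees-on-M}. Behind that step lie \eqref{eq:directional-Rees-FC-formula}, superficial reduction from \(\overline M\) to the one-dimensional module \(N\), and removal of the \(I\)-torsion \(T=0:_MI^\infty\), whose dimension is less than \(D\).

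For the joint-reduction identity, I would start from \eqref{eq:synchronized-joint-reduction} in (R2). Since \(D-i=0\), the \(a_h\)-sum is absent, and for \(n_0\gg0\) and \(\mathbf u\gg\mathbf0\) it reads
\[
\mathfrak m^{n_0}J^{\mathbf u}\overline M=\sum_{t=1}^{D-1}x_t\mathfrak m^{n_0}J^{\mathbf u-\mathbf e_{\ell_t}}\overline M+z\mathfrak m^{n_0}J^{\mathbf u-\mathbf e_j}\overline M .
\]
The task is to remove the factor \(\mathfrak m^{n_0}\). The cleanest route is to read the type \((0,d_1,\ldots,d_s)\) joint-reduction condition in (R2) as a condition on the \(\mathbf J\)-filtration alone. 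With no \(\mathfrak m\)-direction element present, the \(\mathfrak m\)-index plays no role, and the equation is meant at \(n_0=0\). That gives exactly \(J^{\mathbf u}\overline M=\sum_t x_tJ^{\mathbf u-\mathbf e_{\ell_t}}\overline M+zJ^{\mathbf u-\mathbf e_j}\overline M\) for \(\mathbf u\gg\mathbf0\).

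If one insists on deriving the statement from the equation with \(n_0\gg0\), I would argue as follows. Set \(K_{\mathbf u}:=\sum_t x_tJ^{\mathbf u-\mathbf e_{\ell_t}}\overline M+zJ^{\mathbf u-\mathbf e_j}\overline M\subseteq J^{\mathbf u}\overline M\). The equation gives \(\mathfrak m^{n_0}J^{\mathbf u}\overline M\subseteq K_{\mathbf u}\), so \(J^{\mathbf u}\overline M/K_{\mathbf u}\) has finite length. To show it vanishes, I would use the multi-Rees module: the quotient \(\bigoplus_{\mathbf u}J^{\mathbf u}\overline M/K_{\mathbf u}\) is finitely generated over \(\mathcal R(\mathbf J)\) and is killed in high degrees by a power of \(\mathfrak m\). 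I would then combine this with the synchronization (R3) and the equality of lengths \(c^{(j)}=e_R(Q;M)\). The aim is to show that the graded pieces are eventually zero, comparing with Rees's argument in the \(\mathfrak m\)-primary case \cite[Chapter~17]{HunekeSwanson}.

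The main obstacle is this last step. The bare \(n_0\gg0\) equality does not by itself force \(n_0=0\). So I expect the proof to rest on reading (R2) in the degenerate type as the \(\mathfrak m\)-free joint-reduction equation. I would state explicitly that, when \(D-i=0\), the type-\((0,d_1,\ldots,d_s)\) joint reduction of \((\mathfrak m,J_1,\ldots,J_s)\) means precisely the displayed identity without \(\mathfrak m\)-powers.
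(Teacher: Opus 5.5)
Your handling of the first equality matches the paper, which states the corollary without separate proof as the case $i=D$ of Theorem~\ref{thm:directional-Rees-FC}. There the type $\boldsymbol\kappa^{(j)}(\mathbf d)$ has zero $\mathfrak m$-component, no $a_h$ occur, $Q=(x_1,\ldots,x_{D-1},z)$, and \eqref{eq:directional-Rees-on-M} applies verbatim.

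The second display is where your proposal has a gap.

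\begin{itemize}
\item[(a)] \emph{Reading (R2) at $n_0=0$ changes the hypothesis.} Definition~\ref{def:synchronized-Rees-realization} fixes the meaning of (R2) by ``that is, \eqref{eq:synchronized-joint-reduction} for all $n_0\gg0$ and $\mathbf u\gg\mathbf0$''. Putting $D-i=0$ deletes the $a_h$-sum but not the factors $\mathfrak m^{n_0}$. So declaring that the equation ``is meant at $n_0=0$'' replaces the hypothesis instead of using it.
\item[(b)] \emph{The fallback is incomplete, and the diagnosis is wrong.} Your fallback stops at the observation that $J^{\mathbf u}\overline M$ modulo the right-hand side has finite length. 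Your claim that the $n_0\gg0$ identity cannot force the $\mathfrak m$-free one is false here. Neither (R3) nor the length equality $c^{(j)}=e_R(Q;M)$ is needed.
\end{itemize}

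The missing idea is that $\overline M=M/(0:_MI^\infty)$ has no $I$-torsion. Hence $0:_{\overline M}\mathfrak m^{n}\subseteq 0:_{\overline M}I^{n}=0$ for every $n$.

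Put $\mathcal N:=\mathcal R(\mathbf J;\overline M)$ and $\mathcal X:=(x_1T_{\ell_1},\ldots,x_{D-1}T_{\ell_{D-1}},zT_j)\mathcal R(\mathbf J)$. Annihilators of graded modules are graded, so it suffices to test homogeneous elements. A homogeneous $aT^{\mathbf v}$ kills $\mathfrak m^{n}\mathcal N$ iff $\mathfrak m^{n}(a\overline M)=0$, iff $a\overline M=0$. Thus $\operatorname{Ann}(\mathfrak m^{n}\mathcal N)=\operatorname{Ann}(\mathcal N)$, and therefore
\[
 \operatorname{Supp}\bigl(\mathfrak m^{n}\mathcal N/\mathcal X\mathfrak m^{n}\mathcal N\bigr)
 =\operatorname{Supp}(\mathcal N)\cap V(\mathcal X)
 =\operatorname{Supp}\bigl(\mathcal N/\mathcal X\mathcal N\bigr).
\]

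Now use a standard fact. For a finitely generated $\mathbb N^s$-graded module over the standard graded algebra $\mathcal R(\mathbf J)$, vanishing in all degrees $\mathbf u\gg\mathbf0$ is equivalent to having support in $V(\mathcal R(\mathbf J)_{++})$. This holds because $\mathcal R(\mathbf J)_{++}$ is generated by the degree-$\mathbf 1$ part.

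Fix one large $n_0=n$ in (R2). Then the left-hand module above vanishes for $\mathbf u\gg\mathbf0$. Hence so does $\mathcal N/\mathcal X\mathcal N$. Its vanishing in degree $\mathbf u$ is exactly
\[
 J^{\mathbf u}\overline M=\sum_t x_tJ^{\mathbf u-\mathbf e_{\ell_t}}\overline M+zJ^{\mathbf u-\mathbf e_j}\overline M .
\]
The paper supplies no argument for this step either; with it inserted, your proof is complete.
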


In the \(\mathfrak m\)-primary case, the directional Rees-type formula specializes to the classical joint-reduction formula for mixed multiplicities. Thus the preceding theorem recovers Rees' theorem within the directional framework.

\begin{corollary}
\label{cor:classical-Rees-from-directional-FC}
Assume that $k$ is infinite and that $J_1,\ldots,J_s$ are
$\mathfrak m$-primary.  Let
\(
 d_1+\cdots+d_s=D.
\)
Then there are elements
\(
 x_{\ell,1},\ldots,x_{\ell,d_\ell}\in J_\ell
 \,\,(1\leq\ell\leq s)
\)
forming a superficial joint reduction of type
$(d_1,\ldots,d_s)$ with respect to $M$.  If
\(
 Q=(x_{\ell,t}\mid
     1\leq\ell\leq s,\ 1\leq t\leq d_\ell),
\)
then $Q$ is a parameter ideal on $M$ and
\begin{equation}
 e_R\bigl(J_1^{[d_1]},\ldots,J_s^{[d_s]};M\bigr)
 =e_R(Q;M).
 \label{eq:classical-Rees-corollary-FC}
\end{equation}
More precisely, if $j$ is any index with $d_j>0$ and one of the
$J_j$-elements is designated as the terminal element $z$, then the
remaining $D-1$ elements determine a synchronized
$j$-directional Rees realization and
\begin{equation}
 e_R\bigl(J_1^{[d_1]},\ldots,J_s^{[d_s]};M\bigr)
 =c_R^{(j)}
  \bigl(J_1^{[d_1]},\ldots,J_s^{[d_s]};M\bigr)
 =e_R(Q;M).
 \label{eq:classical-directional-Rees-corollary}
\end{equation}
\end{corollary}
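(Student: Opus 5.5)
The plan is to split the statement into two parts. The first part is the classical existence-plus-equality assertion. The second is the directional refinement, which I will obtain by checking that the classical data fit Definition~\ref{def:synchronized-Rees-realization} and then applying Theorem~\ref{thm:directional-Rees-FC}.

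For existence, I will use the infinite residue field to choose the elements $x_{\ell,t}\in J_\ell$ sufficiently generally. Concretely, I choose superficial elements successively, in the sense of \cite[Definition~17.2.1]{HunekeSwanson}, for the tuple $(J_1,\ldots,J_s)$ on $M$ modulo the previously chosen elements. By the classical results \cite[Section~17.4]{HunekeSwanson}, such a sequence of type $(d_1,\ldots,d_s)$ is a joint reduction, $Q$ is a parameter ideal on $M$, and $e(J_1^{[d_1]},\ldots,J_s^{[d_s]};M)=e_R(Q;M)$. Alternatively, I can derive the equality from the directional framework, as follows.

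Fix $j$ with $d_j>0$ and designate one $J_j$-element as $z$. By Theorem~\ref{thm:directional-primary-compatibility}(2), it suffices to prove $c_R^{(j)}(J_1^{[d_1]},\ldots,J_s^{[d_s]};M)=e_R(Q;M)$. Since the ideals are $\mathfrak m$-primary, $I=J_1\cdots J_s$ is $\mathfrak m$-primary, so $0:_MI^\infty=H^0_{\mathfrak m}(M)$ has dimension $0<D$ and the passage to $\overline M$ is harmless. Because $|\mathbf d|=D$, there are no $a_h$'s and $L=(x_{\ell,t})$ minus $z$.

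I then verify the three conditions. For (R1), a superficial sequence for the $J_\ell$ is $Q$-superficial for $\mathfrak m$-primary ideals after generic choice, $\dim N=1$, and $\bar z$ is a parameter on $N$ since $Q$ is a parameter ideal. For (R2), this is exactly the joint reduction property; with $n_0$ absent the equation reduces to the statement of Corollary~\ref{cor:top-directional-Rees-FC}. For (R3), I compute both sides using the proof of Theorem~\ref{thm:directional-primary-compatibility}. For $r$ large, $\mathfrak m^{r+1}$ lands inside $J_j$ and inside $\bar z N$ (since $\bar z N$ is $\mathfrak m$-primary on $N$). Hence the right side becomes $\lambda(\bar z^nN/\bar z^{n+1}N)=e((\bar z);N)$ for $n\gg0$. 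The left side is the constant $c^{(j)}$ by Theorem~\ref{thm:directional-FC-existence}, where the classes $x_{\ell,t}^\star$ and (for the $r$-direction, of which there are none) form the directional $(FC)$-system $\boldsymbol\xi$.

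The main obstacle is showing that the images $x_{\ell,t}^\star$ genuinely form a directional $(FC)$-system on $E^{(j)}(\mathbf J;M)$ with the right dimension drops, and that the left side of (R3) equals $e((\bar z);N)$ rather than just some constant. For this I will argue as follows. In the $\mathfrak m$-primary case $E^{(j)}$ in large $r$-degrees is governed by $J^{\mathbf u}M/J^{\mathbf u+\mathbf e_j}M$, and a generic element is filter-regular there precisely when it is superficial for the tuple, by \cite[Lemma~17.2.4]{HunekeSwanson}. Each step then lowers the Hilbert polynomial by a difference operator, as in Lemma~\ref{lem:directional-difference-FC}. The final quotient has constant length $\lambda(J^{\mathbf u}N/J^{\mathbf u+\mathbf e_j}N)$, which by the terminal joint reduction $J^{\mathbf u}N=\bar zJ^{\mathbf u-\mathbf e_j}N$ equals $\lambda(N/\bar zN)-\lambda(0:_N\bar z)=e((\bar z);N)$ for $\mathbf u\gg0$, as in Remark~\ref{rem:directional-superficial-dimension-one}. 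With (R1)–(R3) established, Theorem~\ref{thm:directional-Rees-FC} gives \eqref{eq:classical-directional-Rees-corollary}, and \eqref{eq:classical-Rees-corollary-FC} follows. Independence of the choice of $j$ is consistent with Theorem~\ref{thm:directional-primary-compatibility}(3).
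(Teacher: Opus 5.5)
Your plan is the paper's plan: choose the $x_{\ell,t}$ generically, note $0:_MI^\infty=H^0_{\mathfrak m}(M)$, identify $c^{(j)}$ with $e$ via Theorem~\ref{thm:directional-primary-compatibility}(2), check (R1)--(R3), and apply Theorem~\ref{thm:directional-Rees-FC}. Your numerical check of (R3) is more explicit than the paper's appeal to ``successive associated-graded comparison maps''.

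The genuine gap is (R1). You assert that, after a generic choice, the $D-1$ non-terminal elements form a $Q$-superficial sequence on $\overline M$, where $Q$ is the ideal they generate together with $z$. Superficiality for the tuple $(J_1,\ldots,J_s)$ gives no control over $\operatorname{gr}_Q(\overline M)$, and the claim can fail for every admissible choice. Here is a counterexample.
\begin{itemize}
\item[(a)] Take $R=k[[a,b,c]]/(a^2,ab)$ and $M=R$, so $D=2$ and $\overline M=R$. Let $J_1=(b,c^2)$, $J_2=\mathfrak m$, $\mathbf d=(1,1)$ and $j=2$. The non-terminal element is $x_1\in J_1$, and $z\in J_2$.
\item[(b)] By (R2), $(x_1,z)$ is a joint reduction. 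Since $e(J_1,J_2;R)=1$, Rees's theorem gives $(x_1,z)k[[b,c]]=(b,c)$.
\item[(c)] Hence $x_1=bf+c^2g$ with $f$ a unit, and $z$ has a nonzero linear $c$-term. So $R=S[[z]]$ with $S=k[[a,b]]/(a^2,ab)$, and $Q=(b,z)$.
\item[(d)] Since $a\cdot(a,b)=0$, one has $ac^2\in az^2R$. Therefore $x_1\cdot az^n=g\,ac^2z^n\in Q^{n+2}$.
\item[(e)] On the other hand $az^n\notin Q^{n+1}$: the $z^n$-coefficient of any element of $(b,z)^{n+1}$ lies in $bS$, and $a\notin bS$.
\end{itemize}
So $x_1^*$ kills the nonzero class of $az^n$ in $\operatorname{gr}_Q(R)_n$ for every $n$. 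Thus $x_1$ is never $Q$-superficial, and (R1) fails for $j=2$, even though $c^{(2)}(J_1,J_2;R)=e(J_1,J_2;R)=1=e(Q;R)$.

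The paper's proof passes over this point in the same way; it only invokes ``classical superficiality''. So neither argument establishes the clause that every $j$ with $d_j>0$ yields a synchronized realization, and the example shows that clause fails for $j=2$. It needs an extra hypothesis, for example $\overline M$ Cohen--Macaulay. Then $\operatorname{gr}_Q(\overline M)\cong(\overline M/Q\overline M)[T_1,\ldots,T_D]$, which makes the generators, in any order, a $Q$-superficial sequence.

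What your argument does prove, without (R1), are the two displayed equalities. \eqref{eq:classical-Rees-corollary-FC} is the classical Rees theorem you cite, and \eqref{eq:classical-directional-Rees-corollary} follows from it together with Theorem~\ref{thm:directional-primary-compatibility}(2). Your own (R3) computation also gives $c^{(j)}=e((\bar z);N)$ directly.

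A smaller repair: filter-regularity of $x^\star$ on $E^{(j)}$ is not equivalent to superficiality for the tuple, as your appeal to Huneke--Swanson, Lemma~17.2.4, suggests. You should impose both open conditions. Then obtain the terminal length by applying the same difference operators separately to the successive quotients of $E^{(j)}$ and to $\lambda\bigl(J^{\mathbf u}M_t/J^{\mathbf u+\mathbf e_j}M_t\bigr)$, where $M_t=\overline M/(x_1,\ldots,x_t)\overline M$.
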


\begin{proof}
Over an infinite residue field, the elements may be chosen
successively in the intersection of the nonempty open sets defining
classical superficiality, directional filter-regularity, and the
joint-reduction condition (\cite[Proposition~17.2.2]{HunekeSwanson} and
\cite[Lemma~2.2 and Proposition~2.3]{VietDinhThanh}).  The resulting
sequence contains exactly \(d_\ell\) elements from \(J_\ell\),
generates a parameter ideal, and is a joint reduction of the
prescribed type.  The successive associated-graded comparison maps
give condition \textup{(R3)}, so the sequence is a synchronized
directional Rees realization.
Since the $J_\ell$ are $\mathfrak m$-primary,
\(
 0:_M I^\infty=H^0_{\mathfrak m}(M),
\)
which has dimension smaller than $D$.  Termwise compatibility of the
directional coefficients with Rees--Teissier mixed multiplicities
gives, for every $j$ with $d_j>0$,
\[
 c_R^{(j)}
 \bigl(J_1^{[d_1]},\ldots,J_s^{[d_s]};M\bigr)
 =e_R\bigl(J_1^{[d_1]},\ldots,J_s^{[d_s]};M\bigr).
\]
Theorem~\ref{thm:directional-Rees-FC} now yields
\eqref{eq:classical-Rees-corollary-FC} and
\eqref{eq:classical-directional-Rees-corollary}.
\end{proof}

\begin{remark}
\label{rem:comparison-Viet-Thanh}
The hierarchy used here follows the theory initiated by D.~Q.~Vi\^et. In \cite{Viet2000}, weak-$(FC)$-elements satisfy a Rees intersection condition and $I$-filter-regularity, while $(FC)$-elements additionally satisfy the expected dimension drop. Over an infinite residue field, suitable weak-$(FC)$-elements form a nonempty Zariski-open set, and Rees superficial sequences yield joint reductions (\cite[Propositions~2.3 and~2.6]{VietDinhThanh}). The corresponding Rees-type theorem for ordinary mixed multiplicities requires a height condition and a joint reduction that is a system of parameters (\cite[Theorem~3.1 and Remark~3.5]{VietDinhThanh}). Note that broader criteria were later obtained in \cite{ThanhViet}. In our setting, however, the quotient $J^{\mathbf u}M/J^{\mathbf u+\mathbf e_j}M$ retains the distinguished direction $j$, so the classical theory does not transfer directly. This motivates defining directional weak-$(FC)$-elements on $E^{(j)}(\mathbf J;M)$ and introducing the synchronization condition needed to recover a Hilbert--Samuel multiplicity. This condition is automatic in the classical $\mathfrak m$-primary case, but not for arbitrary ideals.

\end{remark}

\begin{remark}
The infinitude of $k$ is used only for generic choices. If $k$ is finite, one may pass to the faithfully flat local extension
$
R'=R[X]_{\mathfrak m R[X]},
$
which preserves the relevant numerical invariants and has infinite residue field. However, a joint reduction over $R'$ need not descend to $R$; hence existence in $R$ is asserted only when $k$ is infinite.
\end{remark}

 \begin{remark}
\label{rem:ordinary-joint-reduction-not-enough}
For arbitrary ideals, a joint reduction in the usual sense is not
sufficient to identify a directional mixed multiplicity with the
multiplicity of the ideal generated by the reduction.  Indeed, let $R=k\mathopen{[[}x,y\mathclose{]]}$, $M=R$, $J_1=(x)$, and $J_2=(y).$
The elements \(x\in J_1\) and \(y\in J_2\) form a joint reduction of
type \((1,1)\), since, for every \(n_1,n_2\geq0\),
\begin{align*}
 J_1^{n_1+1}J_2^{n_2+1}
 ={}&xJ_1^{n_1}J_2^{n_2+1}
    +yJ_1^{n_1+1}J_2^{n_2}.
\end{align*}
However, in direction \(j=1\),
\begin{align*}
 H^{(1)}(r,u_1,u_2)
 &=\lambda_R\!\left(
 \frac{x^{u_1}y^{u_2}R}
 {\mathfrak m^{r+1}x^{u_1}y^{u_2}R+x^{u_1+1}y^{u_2}R}
 \right)\\
 &=\lambda_R\!\left(\frac{R}{(\mathfrak m^{r+1},x)}\right)
 =r+1.
\end{align*}
This polynomial contains no \(u_2\)-term, hence $c_R^{(1)}\bigl(J_1^{[1]},J_2^{[1]};R\bigr)=0.$
On the other hand, the joint reduction generates
\(Q=(x,y)=\mathfrak m\), and therefore $c_2(Q;R)=e(Q;R)=1.$
Consequently,
\[
 c_R^{(1)}\bigl(J_1^{[1]},J_2^{[1]};R\bigr)
 \neq c_2((x,y);R).
\]
Thus a Rees-type formula for arbitrary ideals requires an additional
condition controlling the directional grading.  The ordinary
joint-reduction equation controls the classical multigraded product,
but does not separate the coefficients belonging to the different
directions.
\end{remark}

\begin{remark}
\label{rem:directional-rees-single-ideal}
When \(s=1\), there is no interaction among directions and, by
definition, $c_R^{(1)}\bigl(J_1^{[i]};M\bigr)=c_i(J_1;M).$
Thus the obstruction in
Remark~\ref{rem:ordinary-joint-reduction-not-enough} arises only when
one tries to replace a family of ideals by the single ideal generated
by a joint reduction.
\end{remark}

\begin{remark}
For arbitrary positive-height ideals, if \(d_j,d_\ell>0\), there is no
a priori reason for
\[
 c^{(j)}\bigl(J_1^{[d_1]},\ldots,J_s^{[d_s]};M\bigr)
 =c^{(\ell)}\bigl(J_1^{[d_1]},\ldots,J_s^{[d_s]};M\bigr).
\]
The superscript \((j)\) therefore contains genuine information. The
preceding theorem shows that this dependence disappears for
\(\mathfrak m\)-primary ideals in total type \(D\).
\end{remark}

\section{Product formulas}
\label{sec:product-formulas}

The following formula expresses the multiplicity-sequence components of
\(J_1^{n_1}\cdots J_s^{n_s}\) in terms of the directional mixed
multiplicities of \(J_1,\ldots,J_s\). In the \(\mathfrak m\)-primary
case, it specializes to the classical Bhattacharya--Rees--Teissier formula.

\begin{theorem}
\label{thm:directional-product-formula-powers}
Let \(M\) be a nonzero finite \(R\)-module of dimension \(D\geq1\),
and let \(J_1,\ldots,J_s\) be proper ideals satisfying
\(\operatorname{ht}_M(J_\ell)>0\) for every \(\ell\).  Let
\(\mathbf n=(n_1,\ldots,n_s)\in\mathbb N_{>0}^{s}\), and set
\(I(\mathbf n):=J_1^{n_1}\cdots J_s^{n_s}\).  Then, for every
\(1\leq i\leq D\),
\begin{equation}
\begin{aligned}
c_i\bigl(I(\mathbf n);M\bigr)
={}&\sum_{d_1+\cdots+d_s=i}
 \ \sum_{\substack{1\leq j\leq s\\d_j>0}}
 \frac{(i-1)!}{d_1!\cdots(d_j-1)!\cdots d_s!}\cdot c^{(j)}\bigl(J_1^{[d_1]},\ldots,J_s^{[d_s]};M\bigr)
 n_1^{d_1}\cdots n_s^{d_s}.
\end{aligned}
\label{eq:directional-product-formula-powers}
\end{equation}
Equivalently, in the coefficient notation \(c_{i,\boldsymbol\alpha}^{(j)}\), one has
\begin{equation}
c_i\bigl(I(\mathbf n);M\bigr)
=\sum_{|\boldsymbol\alpha|=i-1}
 \binom{i-1}{\boldsymbol\alpha}\mathbf n^{\boldsymbol\alpha}
 \sum_{j=1}^{s}n_jc_{i,\boldsymbol\alpha}^{(j)}
 (J_1|\cdots|J_s;M).
\label{eq:directional-product-multiindex}
\end{equation}
\end{theorem}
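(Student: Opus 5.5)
The plan is to reduce the left-hand side to a directional function of the single ideal \(I(\mathbf n)\) and then decompose that function along a chain of one-step directional quotients, exactly as in the proof of Proposition~\ref{prop:diagonal-directional-sum}.

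\textbf{Step 1: one-ideal reduction.} By Proposition~\ref{prop:one-ideal-directional}, \(c_i(I(\mathbf n);M)=c^{(1)}(I(\mathbf n)^{[i]};M)\). Here \(\operatorname{ht}_M I(\mathbf n)>0\), since otherwise some \(J_\ell\) would lie in a minimal prime of the support. So, by \eqref{eq:proved-directional-polynomial-repetition} with \(s=1\), the degree-\((D-1)\) part of the eventual polynomial of
\[
H(r,v):=\lambda_R\bigl(I(\mathbf n)^vM/(\mathfrak m^{r+1}I(\mathbf n)^vM+I(\mathbf n)^{v+1}M)\bigr)
\]
is \(\sum_{i=1}^{D}\frac{c_i(I(\mathbf n);M)}{(D-i)!\,(i-1)!}r^{D-i}v^{i-1}\).

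\textbf{Step 2: the chain of one-step quotients.} Since \(I(\mathbf n)^v=J^{v\mathbf n}\), I would insert the chain
\[
J^{v\mathbf n}M\supseteq J^{v\mathbf n+\mathbf e_1}M\supseteq\cdots\supseteq J^{v\mathbf n+n_1\mathbf e_1}M\supseteq J^{v\mathbf n+n_1\mathbf e_1+\mathbf e_2}M\supseteq\cdots\supseteq J^{(v+1)\mathbf n}M.
\]
It moves \(n_1\) steps in direction \(1\), then \(n_2\) steps in direction \(2\), and so on. Each successive quotient has the form \(J^{\mathbf w}M/J^{\mathbf w+\mathbf e_j}M\) with \(\mathbf w=v\mathbf n+\mathbf b\), where \(\mathbf b\) ranges over a fixed finite set of shifts, and there are exactly \(n_j\) quotients in direction \(j\). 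Summing over \(v\) and applying the exact-sequence argument of Lemma~\ref{lem:directional-additivity-exact-sequences} gives the following. Up to replacing the induced \(\mathfrak m\)-filtrations on the graded pieces by their intrinsic \(\mathfrak m\)-adic filtrations, \(H(r,v)\) equals
\[
\sum_j\sum_{\mathbf b}H^{(j)}_{\mathbf J,M}\bigl(r,v\mathbf n+\mathbf b\bigr).
\]
The comparison is controlled by Artin--Rees over the relevant Noetherian multigraded algebra, with a constant independent of the multidegree. It therefore shifts only the \(r\)-variable by a bounded amount and affects only terms of total degree \(\le D-2\).

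\textbf{Step 3: restriction to the ray and extraction of coefficients.} Because every \(n_\ell>0\), the ray \(\mathbf u=v\mathbf n+\mathbf b\) eventually lies in the region where \(H^{(j)}\) agrees with \(P^{(j)}_{\mathbf J,M}\). Fixed shifts \(\mathbf b\) do not change top homogeneous parts. Hence the degree-\((D-1)\) part of the polynomial of \(H\) in \((r,v)\) is
\[
\sum_j n_j\,P^{(j)}_{\mathbf J,M,D-1}(r,v\mathbf n).
\]
Substituting \eqref{eq:proved-directional-polynomial-repetition} with \(u_\ell=vn_\ell\), the coefficient of \(r^{D-i}v^{i-1}\) becomes
\[
\sum_{|\mathbf d|=i}\ \sum_{d_j>0}\frac{c^{(j)}(J_1^{[d_1]},\ldots,J_s^{[d_s]};M)\,n_1^{d_1}\cdots n_s^{d_s}}{(D-i)!\,(d_j-1)!\prod_{\ell\ne j}d_\ell!}.
\]
Here the extra factor \(n_j\) combines with \(n_j^{d_j-1}\) to give \(n_j^{d_j}\). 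Comparing this with Step 1 and multiplying by \((D-i)!(i-1)!\) yields \eqref{eq:directional-product-formula-powers}. The multi-index form \eqref{eq:directional-product-multiindex} is the reindexing \(\boldsymbol\alpha=\mathbf d-\mathbf e_j\) via \eqref{eq:translation-old-directional-notation}, using \(\binom{i-1}{\boldsymbol\alpha}=(i-1)!/\prod\alpha_\ell!\).

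\textbf{Main obstacle.} The delicate point is Step 2's passage from the filtration \(\mathfrak m^{r+1}J^{v\mathbf n}M+J^{(v+1)\mathbf n}M\), intersected with the intermediate modules, to the intrinsic directional functions \(H^{(j)}\). I would treat this by viewing the whole chain as a filtration of graded submodules of \(\mathscr S^{(1)}(I(\mathbf n);M)\), regraded along the sub-semigroup \(\mathbb N\mathbf n+\{\mathbf b\}\) of \(\mathbb N^s\). I would then invoke Artin--Rees uniformly over the finitely many pieces, exactly as in \eqref{eq:induced-adic-comparison}. A second check is that the Hilbert--Serre polynomial of \(\mathscr G^{(j)}\) may legitimately be evaluated along a diagonal ray jointly with \(r\to\infty\). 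This holds because the polynomial agrees with the function on an entire translated orthant, and the ray eventually enters it.
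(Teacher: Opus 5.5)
Your proposal is correct and follows essentially the same route as the paper: a lattice-path filtration from \(J^{v\mathbf n}M\) to \(J^{(v+1)\mathbf n}M\) with exactly \(n_j\) steps in direction \(j\), an Artin--Rees comparison of induced versus \(\mathfrak m\)-adic filtrations that leaves the degree-\((D-1)\) part unchanged, invariance of that part under fixed translations \(\mathbf b_t\), and coefficient comparison with the one-ideal recovery of \(c_i(I(\mathbf n);M)\). Your fixed ordering of the steps is a particular case of the paper's arbitrary path. Your remarks on the uniform Artin--Rees constant and on evaluating the polynomial along the ray \(v\mathbf n+\mathbf b\) make explicit a step the paper only asserts.
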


\begin{proof}
Write \(J^{\mathbf u}:=J_1^{u_1}\cdots J_s^{u_s}\) and put \(N=n_1+\cdots+n_s\). Choose a lattice path \(\mathbf0=\mathbf b_0,\mathbf b_1,\ldots,\mathbf b_N=\mathbf n\) such that \(\mathbf b_{t+1}-\mathbf b_t=\mathbf e_{j_t}\) for some \(j_t\in\{1,\ldots,s\}\), and such that the direction \(j\) occurs exactly \(n_j\) times. For every \(q\geq0\), this path gives a filtration
\[
J^{q\mathbf n}M
=J^{q\mathbf n+\mathbf b_0}M
\supseteq J^{q\mathbf n+\mathbf b_1}M
\supseteq\cdots\supseteq
J^{q\mathbf n+\mathbf b_N}M
=J^{(q+1)\mathbf n}M.
\]
The \(t\)-th successive quotient is
\[
\frac{J^{q\mathbf n+\mathbf b_t}M}
 {J^{q\mathbf n+\mathbf b_{t+1}}M}
=\frac{J^{q\mathbf n+\mathbf b_t}M}
 {J^{q\mathbf n+\mathbf b_t+\mathbf e_{j_t}}M},
\]
which is a homogeneous component of the \(j_t\)-directional multiform module.
Equip \(J^{q\mathbf n}M/J^{(q+1)\mathbf n}M\) with its
\(\mathfrak m\)-adic filtration and the successive quotients above
with the induced filtrations.  Additivity of lengths gives an
equality of their cumulative Hilbert functions.
The induced filtrations on the successive quotients need not coincide
exactly with their intrinsic \(\mathfrak m\)-adic filtrations.
Nevertheless, Artin--Rees shows that they are good filtrations.
Replacing an induced good filtration by the intrinsic
\(\mathfrak m\)-adic filtration does not change the homogeneous
component of total degree \(D-1\).  Consequently,
\begin{equation}
\bigl[P_{I(\mathbf n),M}^{(1)}(r,q)\bigr]_{D-1}
=\sum_{t=0}^{N-1}
 \bigl[P_{\mathbf J,M}^{(j_t)}
 (r,q\mathbf n+\mathbf b_t)\bigr]_{D-1}.
\label{eq:path-top-parts}
\end{equation}

Each \(\mathbf b_t\) is fixed.  Hence translation by
\(\mathbf b_t\) changes only terms of total degree at most \(D-2\).
Since direction \(j\) occurs \(n_j\) times along the path,
\eqref{eq:path-top-parts} becomes
\begin{equation}
\bigl[P_{I(\mathbf n),M}^{(1)}(r,q)\bigr]_{D-1}
=\sum_{j=1}^{s}n_jP_{\mathbf J,M,D-1}^{(j)}
(r,q\mathbf n).
\label{eq:weighted-directional-polynomial}
\end{equation}

By the one-ideal recovery property, the left-hand side is
\[
\sum_{i=1}^{D}
\frac{c_i(I(\mathbf n);M)}{(D-i)!\,(i-1)!}
r^{D-i}q^{i-1}.
\]
On the other hand, the directional expansion gives
\begin{align*}
P_{\mathbf J,M,D-1}^{(j)}(r,q\mathbf n)
={}&\sum_{\substack{\mathbf d\in\mathbb N^s\\
                     1\leq|\mathbf d|\leq D,\ d_j>0}}
\frac{c^{(j)}
 \bigl(J_1^{[d_1]},\ldots,J_s^{[d_s]};M\bigr)}
 {(D-|\mathbf d|)!\,(d_j-1)!
  \prod_{\ell\neq j}d_\ell!}\cdot r^{D-|\mathbf d|}q^{|\mathbf d|-1}
n_j^{d_j-1}\prod_{\ell\neq j}n_\ell^{d_\ell}.
\end{align*}
Multiplication by the external factor \(n_j\) in
\eqref{eq:weighted-directional-polynomial} changes the last factor
into \(\mathbf n^{\mathbf d}\).  Comparing the coefficients of
\(r^{D-i}q^{i-1}\) and multiplying by
\((D-i)!(i-1)!\) gives
\eqref{eq:directional-product-formula-powers}.  Formula
\eqref{eq:directional-product-multiindex} follows by putting
\(\boldsymbol\alpha=\mathbf d-\mathbf e_j\).
\end{proof}

In the \(\mathfrak m\)-primary case, the preceding product formula recovers the classical Bhattacharya--Rees--Teissier formula (see \cite[Theorem~17.4.2 and Definition~17.4.3]{HunekeSwanson}).

\begin{corollary}
\label{cor:classical-product-formula-from-directional}
Let \(M\) be a nonzero finite \(R\)-module of dimension \(D\geq1\),
and let \(J_1,\ldots,J_s\) be \(\mathfrak m\)-primary ideals.  Then,
for every \(\mathbf n=(n_1,\ldots,n_s)\in\mathbb N^s\), one has
\begin{equation}
\begin{aligned}
e\bigl(J_1^{n_1}\cdots J_s^{n_s};M\bigr)
={}&\sum_{\substack{\mathbf d\in\mathbb N^s\\|\mathbf d|=D}}
\frac{D!}{d_1!\cdots d_s!}\cdot
e\bigl(J_1^{[d_1]},\ldots,J_s^{[d_s]};M\bigr)
n_1^{d_1}\cdots n_s^{d_s}.
\end{aligned}
\label{eq:classical-product-formula-from-directional}
\end{equation}
\end{corollary}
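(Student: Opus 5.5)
The plan is to specialize Theorem~\ref{thm:directional-product-formula-powers} to $i=D$ and then rewrite both sides using the $\mathfrak m$-primary compatibility results. First I dispose of degenerate $\mathbf n$. If some $n_\ell=0$, I drop that ideal from the tuple. Every monomial $\mathbf n^{\mathbf d}$ with $d_\ell>0$ vanishes, and the mixed multiplicities with $d_\ell=0$ are the mixed multiplicities of the smaller tuple; this follows from the Bhattacharya polynomial~\eqref{eq:Bhattacharya-repetition} restricted to $u_\ell=0$, or from the classical theory. If $\mathbf n=\mathbf 0$, the product ideal is $R$, and both sides are $0$ by the usual convention. So I may assume $\mathbf n\in\mathbb N_{>0}^s$, where the product ideal $I(\mathbf n)=J_1^{n_1}\cdots J_s^{n_s}$ is $\mathfrak m$-primary, hence proper and of positive height on $M$. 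The hypotheses of Theorem~\ref{thm:directional-product-formula-powers} then hold, because each $\mathfrak m$-primary $J_\ell$ is proper with $\operatorname{ht}_M(J_\ell)=\dim \overline R=D\ge 1$.

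Next I apply \eqref{eq:directional-product-formula-powers} with $i=D$. The left side is $c_D(I(\mathbf n);M)$, which equals $e(I(\mathbf n);M)$ because $I(\mathbf n)$ is $\mathfrak m$-primary (the statement after \eqref{eq:multiplicity-sequence-polynomial}). On the right, $|\mathbf d|=D$ and $d_j>0$, so Theorem~\ref{thm:directional-primary-compatibility}(2) replaces each $c^{(j)}(J_1^{[d_1]},\ldots,J_s^{[d_s]};M)$ by the classical $e(J_1^{[d_1]},\ldots,J_s^{[d_s]};M)$, which no longer depends on $j$. The inner sum then becomes
\[
\sum_{j:\,d_j>0}\frac{(D-1)!}{d_1!\cdots(d_j-1)!\cdots d_s!}
=\frac{(D-1)!}{d_1!\cdots d_s!}\sum_{j}d_j=\frac{D!}{d_1!\cdots d_s!},
\]
where terms with $d_j=0$ contribute zero to $\sum_j d_j$, so extending the sum to all $j$ is harmless. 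Substituting this gives \eqref{eq:classical-product-formula-from-directional} for all $\mathbf n$ with positive entries.

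The main obstacle is only the bookkeeping for the boundary case with some $n_\ell=0$. I would handle it by the reduction to fewer ideals described above. Alternatively, I would note that both sides are polynomials in $\mathbf n$ that agree on $\mathbb N_{>0}^s$, although the left side is not a priori polynomial at zero entries. For that reason the reduction argument is the cleaner route, and I would apply it case by case.
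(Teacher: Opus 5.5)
Your proposal is correct and follows essentially the same route as the paper's proof. You specialize Theorem~\ref{thm:directional-product-formula-powers} at $i=D$, use $c_D(I(\mathbf n);M)=e(I(\mathbf n);M)$ and Theorem~\ref{thm:directional-primary-compatibility}(2) to replace each $c^{(j)}$ by the classical mixed multiplicity, collapse the directional factors via $\sum_j d_j=D$, and handle zero entries of $\mathbf n$ by passing to the subtuple. You also make explicit two things the paper leaves implicit: the case $\mathbf n=\mathbf 0$, and the fact that types with $d_\ell=0$ are mixed multiplicities of the subtuple. That fact should be cited from the classical theory, or proved by a top-degree comparison, rather than by literally setting $u_\ell=0$ in the Bhattacharya polynomial, which agrees with the length function only for $\mathbf u\gg\mathbf 0$.
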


\begin{proof}
First suppose that \(n_1,\ldots,n_s>0\).  The product
\(I(\mathbf n)\) is \(\mathfrak m\)-primary.  Consequently,
\(c_i(I(\mathbf n);M)=0\) for \(1\leq i<D\), and
\(c_D(I(\mathbf n);M)=e(I(\mathbf n);M)\).  Moreover,
\begin{equation}
c^{(j)}
\bigl(J_1^{[d_1]},\ldots,J_s^{[d_s]};M\bigr)
=e\bigl(J_1^{[d_1]},\ldots,J_s^{[d_s]};M\bigr)
\label{eq:directional-classical-primary-corollary}
\end{equation}
whenever \(d_1+\cdots+d_s=D\) and \(d_j>0\), by
Theorem~\ref{thm:directional-primary-compatibility}. Applying Theorem~\ref{thm:directional-product-formula-powers} with
\(i=D\), we obtain
\begin{align*}
e(I(\mathbf n);M)
={}&\sum_{d_1+\cdots+d_s=D}\sum_{j:d_j>0}
\binom{D-1}{d_1,\ldots,d_j-1,\ldots,d_s}\cdot
e\bigl(J_1^{[d_1]},\ldots,J_s^{[d_s]};M\bigr)
\mathbf n^{\mathbf d}.
\end{align*}
For every fixed \(\mathbf d\) with \(|\mathbf d|=D\), one has

\begin{align*}
 \sum_{j:d_j>0}
 \binom{D-1}{d_1,\ldots,d_j-1,\ldots,d_s}
 &=
 \sum_{j:d_j>0}
 \frac{(D-1)!}
 {d_1!\cdots(d_j-1)!\cdots d_s!} \\
 &=
 \frac{(D-1)!}{d_1!\cdots d_s!}
 \sum_{j=1}^{s}d_j \\
 &=
 \frac{D!}{d_1!\cdots d_s!}.
\end{align*}

This gives \eqref{eq:classical-product-formula-from-directional}.
If some \(n_\ell=0\), omit the corresponding ideals and apply the
formula to the subtuple \((J_\ell\mid n_\ell>0)\).  Reintroducing the
omitted indices yields the same identity, since precisely the terms
with \(d_\ell>0\) vanish.
\end{proof}

\begin{remark}
\label{rem:comparison-CBJP-product-formula}
It should be noted that Theorem~\ref{thm:directional-product-formula-powers} refines and extends the product formula of Callejas--Bedregal and Jorge P\'erez \cite[Theorem~5.3]{CallejasBedregalJorgePerez}. In fact, the original Achilles--Manaresi indexing, their formula reads
\begin{equation}
c_k^{\mathrm{AM}}(J_1\cdots J_s;R) = \sum_{|\boldsymbol\alpha|=D-1-k} \binom{D-1-k}{\boldsymbol\alpha} c_{k,\boldsymbol\alpha}^{\mathrm{CBJP}} (J_1|\cdots|J_s;R),
\label{eq:CBJP-product-original-indexing}
\end{equation}
where \(\binom{D-1-k}{\boldsymbol\alpha} = \frac{(D-1-k)!}{\alpha_1!\cdots\alpha_s!}\). Under the codimension translation \(i=D-k\), \(c_i=c_{D-i}^{\mathrm{AM}}\), and \(c_{i,\boldsymbol\alpha}^{\Delta} = c_{D-i,\boldsymbol\alpha}^{\mathrm{CBJP}}\), \eqref{eq:CBJP-product-original-indexing} becomes
\begin{equation}
c_i(J_1\cdots J_s;R) = \sum_{|\boldsymbol\alpha|=i-1} \binom{i-1}{\boldsymbol\alpha} c_{i,\boldsymbol\alpha}^{\Delta} (J_1|\cdots|J_s;R).
\label{eq:CBJP-product-codimension-indexing}
\end{equation}

Recall that \(c_{i,\boldsymbol\alpha}^{(j)} (J_1|\cdots|J_s;M) := c^{(j)}\bigl( J_1^{[\alpha_1]},\ldots, J_j^{[\alpha_j+1]},\ldots, J_s^{[\alpha_s]};M \bigr)\). By Proposition~\ref{prop:diagonal-directional-sum}, \(c_{i,\boldsymbol\alpha}^{\Delta} (J_1|\cdots|J_s;M) = \sum_{j=1}^{s} c_{i,\boldsymbol\alpha}^{(j)} (J_1|\cdots|J_s;M)\). Thus, the module version of \eqref{eq:CBJP-product-codimension-indexing} rewrites as
\begin{equation}
c_i(J_1\cdots J_s;M) = \sum_{|\boldsymbol\alpha|=i-1} \binom{i-1}{\boldsymbol\alpha} \sum_{j=1}^{s} c_{i,\boldsymbol\alpha}^{(j)} (J_1|\cdots|J_s;M).
\label{eq:CBJP-product-directional-form}
\end{equation}
Hence, their product formula is precisely the sum of the directional product formula over all distinguished directions. Note that Theorem~\ref{thm:directional-product-formula-powers} provides the stronger multihomogeneous relation
\begin{equation}
c_i(J_1^{n_1}\cdots J_s^{n_s};M) = \sum_{|\boldsymbol\alpha|=i-1} \binom{i-1}{\boldsymbol\alpha} \mathbf n^{\boldsymbol\alpha} \sum_{j=1}^{s} n_j c_{i,\boldsymbol\alpha}^{(j)} (J_1|\cdots|J_s;M),
\label{eq:directional-power-comparison-CBJP}
\end{equation}
where \(\mathbf n^{\boldsymbol\alpha} =n_1^{\alpha_1}\cdots n_s^{\alpha_s}\). Setting \(n_1=\cdots=n_s=1\) in \eqref{eq:directional-power-comparison-CBJP} recovers \eqref{eq:CBJP-product-directional-form}. Consequently:
\begin{enumerate}[label=\textup{(\roman*)}]
\item For \(M=R\) and \(n_1=\cdots=n_s=1\), it recovers \cite[Theorem~5.3]{CallejasBedregalJorgePerez};
\item For finitely generated module \(M\), it extends the formula to module structures;
\item For \(n_1,\ldots,n_s\in\mathbb N_{>0}\), it yields a multihomogeneous power version;
\item The directional coefficients preserve the individual components aggregated by \(c_{i,\boldsymbol\alpha}^{\Delta}\).
\end{enumerate}

When \(J_1,\ldots,J_s\) are \(\mathfrak m\)-primary, Theorem~\ref{thm:directional-primary-compatibility} gives 
$$c_{D,\boldsymbol\alpha}^{(j)} (J_1|\cdots|J_s;M) = e\bigl( J_1^{[\alpha_1]},\ldots, J_j^{[\alpha_j+1]},\ldots, J_s^{[\alpha_s]};M \bigr)$$ for \(|\boldsymbol\alpha|=D-1\). Thus
\begin{equation}
c_{D,\boldsymbol\alpha}^{\Delta} (J_1|\cdots|J_s;M) = \sum_{j=1}^{s} c_{D,\boldsymbol\alpha}^{(j)} (J_1|\cdots|J_s;M) = \sum_{j=1}^{s} e\bigl( J_1^{[\alpha_1]},\ldots, J_j^{[\alpha_j+1]},\ldots, J_s^{[\alpha_s]};M \bigr),
\label{eq:primary-diagonal-comparison-CBJP}
\end{equation}
recovering \cite[Proposition~5.2(2b)]{CallejasBedregalJorgePerez} when \(M=R\). In addition, for \(\mathbf d\in\mathbb N^s\) with \(|\mathbf d|=D\), summing the combinatorial factors over direction \(j\) yields \(\sum_{j:d_j>0} \frac{(D-1)!} {d_1!\cdots(d_j-1)!\cdots d_s!} = \frac{(D-1)!}{d_1!\cdots d_s!} \sum_{j=1}^{s}d_j = \frac{D!}{d_1!\cdots d_s!}\). Therefore, the directional power formula specializes to the classical Bhattacharya--Rees--Teissier formula
\begin{equation}
e(J_1^{n_1}\cdots J_s^{n_s};M) = \sum_{\substack{\mathbf d\in\mathbb N^s\\|\mathbf d|=D}} \frac{D!}{d_1!\cdots d_s!}\, e\bigl( J_1^{[d_1]},\ldots,J_s^{[d_s]};M \bigr) \mathbf n^{\mathbf d}.
\label{eq:classical-product-from-directional-comparison}
\end{equation}
\end{remark}



\section{Acknowledgments}
This work was developed during the postdoctoral fellowship of the first author at the Instituto de Ci\^encias Matem\'aticas e de Computa\c{c}\~ao (ICMC), Universidade de S\~ao Paulo (USP) - 2026. He gratefully acknowledges financial support from FAPESP, grant 2025/20830-5. The first author also gratefully acknowledges the Universidade Tecnol\'ogica Federal do Paran\'a -- Campus Guarapuava for the opportunity to undertake his postdoctoral research. The second author thanks CNPq-Brazil, grant 304851/2025-6 and -FAPESP-Brazil, grant 2025/21618-0. OpenAI’s ChatGPT was used during the preparation of this work as an auxiliary generative-AI tool for  discussion, checks of algebraic manipulations and examples, comparison with cited literature, and assistance with English exposition and LaTeX. All material used in the final manuscript was checked by the authors against the relevant arguments and sources. The authors assume full responsibility for the final text.

\end{document}